\documentclass{amsart}
\usepackage{amsmath}
\usepackage{amsthm}
\usepackage{amssymb}
\usepackage{bm}
\usepackage{mathrsfs}
\usepackage{graphicx}
\usepackage{fullpage}
\usepackage{graphicx}
\usepackage{xcolor}
\usepackage{tikz}

\usetikzlibrary{arrows}
\usetikzlibrary {positioning}

\newtheorem{theorem}{Theorem}[section]

\newtheorem{observation}[theorem]{Observation}

\newtheorem{lemma}[theorem]{Lemma}
\newtheorem{corollary}[theorem]{Corollary}
\theoremstyle{definition}
\newtheorem{definition}[theorem]{Definition}

\begin{document}
\title{Surrounding cops and robbers on graphs of bounded genus}

\author{Peter Bradshaw}
\address{Department of Mathematics, Simon Fraser University, Vancouver, Canada}
\email{peter\_bradshaw@sfu.ca}

\author{Seyyed Aliasghar Hosseini}
\address{School of Computing Science, Simon Fraser University, Vancouver, Canada}
\email{seyyed\_aliasghar\_hosseini@sfu.ca}

\thanks{The authors are partially supported by the Natural Sciences and Engineering Research Council of Canada (NSERC)}

\begin{abstract} We consider a surrounding variant of cops and robbers on graphs of bounded genus. We obtain bounds on the number of cops required to surround a robber on planar graphs, toroidal graphs, and outerplanar graphs. We also obtain improved bounds for bipartite planar and toroidal graphs. We briefly consider general graphs of bounded genus and graphs with a forbidden minor.
\end{abstract}
\maketitle

%\begin{lemma}
%In a Barnette graph, two vertices at distance $d$ along a face are at distance $d$. Furthermore, a path of length $d$ from one to the other belongs to that face.
%\end{lemma}

%\begin{theorem}
%$s(G)=k+1$ for the product of $k$ trees.
%\end{theorem}

%\begin{theorem}
%If you subdivide all edges constant number of times, $s(G)$ is reduced by at most half.
%\end{theorem}
\section{Introduction}
We consider the game of ``Surrounding Cops and Robbers," first invented by A. Burgess et. al. \cite{Burgess}. The rules of the game are as follows. The game is played by two players on a finite graph $G$ with perfect information. The first player controls a team of cops $C_1, \dots, C_m$, and the second player controls a robber. Initially, the first player places each cop $C_i$ on some vertex $v_i$ of $G$, and then the second player places the robber on some vertex $r$ of $G$. Then the two players alternate in taking turns. On the first player's turn, she moves each cop $C_i$ to an adjacent vertex or leaves $C_i$ at its current vertex. On the second player's turn, he moves the robber to an adjacent vertex or leaves the robber at its current vertex. Two cops may occupy the same vertex, and a cop and robber may occupy the same vertex. The first player wins if at any point the robber occupies a vertex $r$ such that each neighbor of $r$ is occupied by a cop; that is, the first player wins by ``surrounding" the robber with cops. The second player wins by indefinitely preventing the first player from winning; equivalently, the second player wins if any game position ever occurs twice. With these win conditions in mind, a final rule is added stating that the second player may not end a turn with the robber on the same vertex as a cop; this prevents the robber from remaining at a high-degree vertex forever. 

The game of Surrounding Cops and Robbers is one of several variants of the game of Cops and Robbers, invented by A. Quilliot \cite{Quilliot} and independently by R. Nowakowski and P. Winkler \cite{Nowakowski}. In the traditional game of Cops and Robbers, the player controlling the cops wins by moving a cop to the same vertex as the robber and thereby ``capturing" the robber. Several other Cops and Robbers variants exist. One such variant is Cops and Attacking Robber, invented by A. Bonato et. al. \cite{Bonato}, in which the robber may capture a cop by moving to the cop's vertex, after which the captured cop is removed from the game. Another variant is Lazy Cops and Robbers, invented by D. Offner and K. Ojakian \cite{Offner}, in which only one cop may move on each turn. One variant that shares remarkable similarities with Surrounding Cops and Robbers is a variant invented by M. Huggan and R. Nowakowski \cite{robot} in which the robber only decides his move after each cop has already committed to making a move.

When considering the game of Surrounding Cops and Robbers, one natural parameter to consider is the \textit{surrounding cop number}, which is defined in \cite{Burgess} as follows. Given a finite graph $G$, the surrounding cop number of $G$, denoted $s(G)$, is the minimum number $m$ such that if the first player has a team of $m$ cops, then the first player has a winning strategy in the game of Surrounding Cops and Robbers on $G$. The authors of \cite{Burgess} show several bounds on the surrounding cop number of certain graph classes, including grids, products of cycles, and normal Cayley graphs. The surrounding cop number is related to the notion of \textit{cop number}, introduced by M. Aigner and M. Fromme \cite{Aigner}, which is the minimum number of cops needed to give the player controlling the cops a winning strategy in the traditional game of Cops and Robbers.

The cop number of graphs of bounded genus is widely studied. Aigner and Fromme \cite{Aigner} show that planar graphs have cop number at most $3$; F. Lehner \cite{LehnerTorus} shows that toroidal graphs also have cop number at most $3$; and N. Bowler et. al. \cite{Lehner} show that graphs of genus $g$ have cop number at most $\lfloor \frac{4}{3}g + \frac{10}{3} \rfloor $. However, similar bounds for the surrounding cop number of graphs of bounded genus are not known. In this paper, we obtain surrounding cop number bounds for planar graphs, toroidal graphs, and outerplanar graphs. We also obtain improved bounds for bipartite planar and toroidal graphs, and we obtain some general bounds for graphs with forbidden minors. 
%{\PB I don't know how to format these citations. If there are no parentheses, then they distract from the numbers in the bounds, and if there are parentheses, it looks ugly.}

Our main tool will be the guarding of geodesic paths, introduced by Aigner and Fromme \cite{Aigner} and used by many other authors (c.f.  \cite{Lu}, \cite{QuilliotGenus},   \cite{Schroeder},  \cite{Scott}). 
%{\PB I don't know how important it is to list lots of papers that use the technique of guarding geodesics. Is this good? Or is it too many even?} 
Given a graph $G$, we will choose certain geodesic paths in $G$ to be \textit{guarded}. We will refer to the robber's \textit{region} or \textit{territory} as the component occupied by the robber in the graph obtained by removing guarded paths from $G$. We will successively make the robber's territory smaller until the robber's territory contains a single vertex, at which point we will show that the robber is surrounded. 
\section{Planar graphs}
In this section, we will consider the game of Surrounding Cops and Robbers on planar graphs. We will show that for planar graphs $G$, $s(G) \leq 7$. We will need some preliminaries. We will say that a $(u,v)$-path is a path with endpoints $u,v$.
\begin{definition}
Let $G$ be a graph. Let $P$ be a $(u,v)$-path of length $l$ in $G$. We say that $P$ is \textit{geodesic} with respect to $G$ if all $(u,v)$-paths in $G$ have length at least $l$.
\end{definition}
\begin{definition}
Let $G$ be a graph with a subgraph $H$. We say that $H$ is \textit{geodesically closed} with respect to $G$ if for any $u,v \in V(H)$, every geodesic $(u,v)$-path in $G$ is a subgraph of $H$.
\end{definition}

\begin{lemma}
Let $G$ be a graph, and let $P = (v_0, v_1, \dots, v_k)$ be a geodesic path in $G$. Suppose that a vertex $w \in V(G)$ is adjacent to $v_j \in V(P)$. Then $j-1 \leq dist(v_0,w) \leq j+1$.
\label{lemma3}
\end{lemma}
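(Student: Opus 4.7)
The plan is to prove the two inequalities separately, using only the triangle inequality for graph distance together with the fact that a geodesic path realizes distances between its endpoints (and, crucially, between $v_0$ and any intermediate vertex $v_j$).

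First I would observe the basic fact that since $P$ is a geodesic $(v_0,v_k)$-path, the subpath $(v_0,v_1,\dots,v_j)$ must itself be a geodesic $(v_0,v_j)$-path; otherwise we could replace it with a shorter $(v_0,v_j)$-path and splice on $(v_j,\dots,v_k)$ to beat the length of $P$. Consequently $\mathrm{dist}(v_0,v_j)=j$. I would also record the adjacency hypothesis as $\mathrm{dist}(v_j,w)=1$ (or $0$, but then both bounds are trivial; I'll assume adjacency means distance $1$, which is the only nontrivial case).

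For the upper bound $\mathrm{dist}(v_0,w)\le j+1$, I apply the triangle inequality: $\mathrm{dist}(v_0,w)\le \mathrm{dist}(v_0,v_j)+\mathrm{dist}(v_j,w)=j+1$. For the lower bound $\mathrm{dist}(v_0,w)\ge j-1$, I argue by contradiction: if $\mathrm{dist}(v_0,w)\le j-2$, then concatenating a shortest $(v_0,w)$-path with the edge $wv_j$ gives a $(v_0,v_j)$-walk of length at most $(j-2)+1=j-1$, contradicting $\mathrm{dist}(v_0,v_j)=j$.

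There is no real obstacle here; the whole content is the observation that prefixes of geodesics are geodesics, after which both inequalities are one-line applications of the triangle inequality. The main thing to be careful about is to state the prefix observation explicitly so that $\mathrm{dist}(v_0,v_j)=j$ is justified rather than simply asserted from ``$P$ is geodesic.''
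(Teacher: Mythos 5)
Your proof is correct and follows essentially the same route as the paper: the upper bound comes from exhibiting the length-$(j+1)$ walk through $v_j$, and the lower bound from the observation that $\mathrm{dist}(v_0,w)\le j-2$ would contradict the geodesic property of $P$. The only cosmetic difference is that you localize the contradiction at $v_j$ via the explicit ``prefixes of geodesics are geodesics'' fact, whereas the paper extends the shorter walk all the way to $v_k$; both are equivalent one-line arguments.
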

\begin{proof}
The path $(v_0, \dots, v_j, w)$ is a path of length $j+1$, so $dist(v_0,w) \leq j+1$. Suppose that $dist(v_0,w) \leq j-2$. Then $dist(v_0,v_j) \leq j-1$, implying $dist(v_0,v_k) \leq k-1$, which contradicts the assumption that $P$ is a geodesic path. 
\end{proof}

The following lemma shows that we may guard a geodesic path with three cops.

\begin{lemma}
Let $G$ be a graph. Let $(v_0, \dots, v_k)$ be a geodesic path of $G$. Then there exists a strategy using three cops such that after a finite number of moves, the robber is unable to access $P$.
\label{lemmaThreeCops}
\end{lemma}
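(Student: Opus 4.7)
The plan is to position three cops on $P$ so that, at each robber move, they block every vertex of $P$ that the robber could step to. By Lemma~\ref{lemma3}, any $v_j \in V(P)$ adjacent to the robber's position $r$ satisfies $j - 1 \le dist(v_0, r) \le j + 1$, so if $d = dist(v_0, r)$ then the only path-vertices the robber can reach in one move are $v_{d-1}, v_d, v_{d+1}$ (when those indices lie in $\{0, \dots, k\}$); since the robber cannot end a turn on a cop's vertex, it suffices to have a cop on each. Setting $m_t = \min\{d_t, k\}$, where $d_t$ is the robber's distance from $v_0$ after his $t$-th move, the target configuration is one cop at $v_{m_t}$, one at $v_{\max\{0, m_t - 1\}}$, and one at $v_{\min\{k, m_t + 1\}}$.

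I would place all three cops at $v_0$ initially, and then run a catch-up phase followed by a maintenance phase. In the catch-up phase, each cop independently performs the classical single-cop geodesic shadow: cop $i$, with current path-index $j^{(i)}$ and target $m_t^{(i)} \in \{m_t - 1, m_t, m_t + 1\}$, moves one step along $P$ toward its target each turn. A standard potential argument with $\phi^{(i)}_t = |m_t^{(i)} - j^{(i)}_t|$ shows that the gap is nonincreasing per round and strictly decreases unless the robber widens it by shifting $d_t$ in the same direction as the cop's target; since $m_t$ lives in the bounded interval $[0, k]$, such cooperative runs by the robber are finite, and all three cops reach their targets in finite time.

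In the maintenance phase, any robber move changes $m_t$ by at most one, so each cop can restore the target configuration by a single step along $P$. The blocking property described in the first paragraph then holds permanently, barring the robber from stepping onto $P$. If the robber happens to be on $P$ at the moment the invariant is first achieved, one cop shares his vertex; on his next turn he must move, and his only on-path neighbors $v_{d-1}, v_{d+1}$ are occupied by the flanking cops, so he is forced off $P$ and can never return.

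The main obstacle is handling the catch-up for three cops at once, since the robber's moves influence all three targets simultaneously. I resolve this by decoupling: each cop's strategy depends only on $m_t$, so the three shadow arguments run in parallel and the three potentials $\phi^{(i)}$ decrease independently. Endpoint clamping at $v_0$ and $v_k$ collapses the number of distinct targets to two when $m_t \in \{0, k\}$, but the redundant cop simply rests at the corresponding endpoint, which causes no issue.
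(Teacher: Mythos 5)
Your proof is correct and follows essentially the same approach as the paper: track the robber's distance to $v_0$ as a shadow on $P$ (clamped at $v_k$), park three cops on the shadow and its two path-neighbors, and invoke Lemma~\ref{lemma3} to see that every $P$-vertex adjacent to the robber is then occupied. The only difference is that you spell out the catch-up phase with an explicit potential argument, which the paper leaves implicit.
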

\begin{proof}
We name our cops $C_1, C_2, C_3$. Whenever the robber occupies a vertex $w \in G$ with $dist(w,v_0) = d$ such that $d \leq k$, we say that the robber has a \textit{shadow} at the vertex $v_d$. Whenever the robber occupies a vertex $w \in G$ with $dist(w,v_0) \geq k$, we say that the robber has a shadow at $v_k$. When the robber moves, the distance from the robber to $v_0$ changes by at most $1$, and thus after each robber move, the robber's shadow on $P$ either stays put or moves to an adjacent vertex of $P$. Therefore, after a finite number of moves, we can reach one of the following types of positions:
\begin{itemize}
\item The robber's shadow occupies $v_i$ ($i \leq k-1$) and $C_1, C_2, C_3$ occupy $v_{i-1}, v_i,v_{i+1}$.
\item The robber's shadow occupies $v_k$, and $C_1, C_2, C_3$ occupy $v_{k-1}, v_k, v_{k}$.
\item The robber's shadow occupies $v_0$, and $C_1, C_2, C_3$ occupy $v_{0}, v_0, v_{1}$.
\end{itemize}

Furthermore, such a position can be achieved on each subsequent turn simply by following the robber's shadow with $C_1, C_2, C_3$. We call such a movement pattern \textit{stalking the robber's shadow} on $P$. We claim that when $C_1,C_2,C_3$ begin stalking the robber's shadow on $P$, the robber must leave $P$ for at least one turn. Indeed, if the robber occupies a vertex $v_j \in P$, then the robber's shadow occupies $v_j$, and $C_1, C_2, C_3$ occupy $v_j$, as well as all neighbors of $v_j$ on $P$; therefore, the robber must move off of $P$. 

Next, we show that after $C_1,C_2,C_3$ begin stalking the robber's shadow on $P$, the robber cannot enter $P$ from a vertex outside of $P$. Suppose that the robber occupies a vertex $w$ that does not belong to $P$. If $w$ is not adjacent to $P$, then the robber cannot move onto $P$. If $w$ is adjacent to $P$ and $dist(v_0,w) = j$, let $u \in P$ be a neighbor of $w$. By Lemma \ref{lemma3}, $u$ is a vertex that is occupied by one of $C_1,C_2,C_3$; therefore, the robber cannot move onto $P$.

We see that when $C_1, C_2, C_3$ begin stalking the robber's shadow on $P$, the robber is forced to exit $P$, and then the robber is never again able to enter $P$. Thus the lemma is proven.
\end{proof}

Next, we show that geodesically closed paths can be guarded with only two cops.
\begin{lemma}
Let $G$ be a graph. Let $P = (v_0, \dots, v_k) \subseteq G$ be a path. If $P$ is geodesically closed, then there exists a strategy involving two cops such that after a finite number of moves, then the robber is unable to access vertices of $P$.
\label{lemmaTwoCops}
\end{lemma}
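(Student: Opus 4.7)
The plan is to mimic the three-cop strategy from Lemma \ref{lemmaThreeCops}, but exploit the geodesic closure of $P$ to drop from three cops to two. For any vertex $w \in V(G)$, define the shadow to be $v_{s(w)}$, where $s(w) = \min(dist(v_0, w), k)$. The two cops $C_1, C_2$ aim to reach and maintain the configuration in which $C_1$ occupies $v_{s(w)}$ and $C_2$ occupies $v_{\max(s(w)-1, 0)}$, where $w$ is the robber's current position. As in Lemma \ref{lemmaThreeCops}, the shadow shifts by at most one whenever the robber moves, so the two cops can achieve this configuration and stalk it from some turn onward.

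The technical heart of the proof is showing that, once this configuration is reached, a robber at $w \notin V(P)$ cannot move onto $P$. Suppose $v_j \in V(P)$ is a neighbor of $w$. By Lemma \ref{lemma3}, $dist(v_0, w) \in \{j-1, j, j+1\}$. The case $dist(v_0, w) = j-1$ is ruled out by geodesic closure: a $(v_0, w)$-geodesic followed by the edge $wv_j$ would yield a $(v_0, v_j)$-geodesic of length $j$ passing through $w \notin V(P)$, contradicting the assumption that $P$ is geodesically closed and contains both $v_0$ and $v_j$. Hence $dist(v_0, w) \in \{j, j+1\}$, which is equivalent to $j \in \{s(w)-1, s(w)\}$; but both of these vertices are occupied by cops, so the robber cannot move to $v_j$.

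It remains to force the robber off $P$ if they begin there. If the robber sits at $v_j \in V(P)$, then $C_1$ also occupies $v_j$, so the robber must move, and $C_2$ blocks $v_{j-1}$ (or is at $v_0$ if $j=0$). The robber's only options are to step forward to $v_{j+1}$, after which the cops shift one step toward $v_k$ and the argument repeats with a strictly larger index, or to leave $P$. Since $j$ is bounded by $k$, this chase terminates with the robber off $P$ after at most $k$ rounds (at the latest when the robber reaches $v_k$, since then no forward step on $P$ remains), and the preceding paragraph prevents re-entry. The principal obstacle in writing this out cleanly is the geodesic-closure step that excludes $dist(v_0, w) = j-1$; the alignment phase and the on-$P$ push-off are routine adaptations of the three-cop proof.
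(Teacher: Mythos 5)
Your proposal is correct and follows essentially the same route as the paper's proof: the same two-cop shadow-stalking configuration at $v_{j-1},v_j$, the same use of geodesic closure to exclude $dist(v_0,w)=j-1$ for a neighbor $w\notin V(P)$ of $v_j$, and the same push-toward-$v_k$ argument to evict a robber who starts on $P$. No gaps.
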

\begin{proof}
We define the shadow of the robber as in Lemma \ref{lemmaThreeCops}. By the discussion in the proof of Lemma \ref{lemmaThreeCops}, two cops $C_1, C_2$ can reach vertices $v_{j-1}, v_j$, where $v_j$ is the position of the robber's shadow (or $C_1, C_2$ can reach $v_0, v_0$ if the robber's shadow occupies $v_0$) on every turn after a finite number of turns. Again, we call this pattern of movement \textit{stalking the robber's shadow on $P$}. We claim that by stalking the robber's shadow on $P$, $C_1$ and $C_2$ prevent the robber from entering $P$ from outside of $P$.

Suppose that the robber occupies a vertex $w \not \in V(P)$ that is adjacent to $v_j \in V(P)$. By Lemma \ref{lemma3}, $dist(v_0, w) \in \{j-1,j,j+1\}$. Furthermore, as $P$ is geodesically closed, $dist(v_0, w) \neq j-1$; otherwise, $(v_0, \dots, w, v_j)$ is a geodesic path between two vertices in $P$ that is not contained in $P$, a contradiction. Thus we see that $dist(v_0,w) \in \{j,j+1\}$, and thus the strategy of $C_1, C_2$ dictates that a cop occupy the vertex $v_j$. Therefore, the robber is unable to enter $P$.

It remains to show that $C_1,C_2$ can force the robber to leave the path $P$ by stalking the robber's shadow. Suppose that the robber occupies a vertex $v_j \in V(P)$ when $C_1, C_2$ begin stalking the robber's shadow. As the robber occupies $v_j$, the robber's shadow also occupies $v_j$, and $C_1, C_2$ occupy $v_{j-1},v_j$. (If the robber occupies $v_0$, then $C_1,C_2$ occupy $v_0,v_0$.) Thus the robber must move away from $v_j$. If the robber leaves $P$, then the proof is complete; otherwise, the robber moves to $v_{j+1}$. Then $C_1,C_2$ move to $v_j,v_{j+1}$, and the robber must move off of $P$ or move to $v_{j+2}$. By continuing to stalk the robber's shadow, the robber will either move off of $P$ voluntarily, or the robber will reach $v_k$, at which point $C_1, C_2$ will occupy $v_{k-1},v_k$. At this point, the robber will have no unoccupied neighbor in $P$, and the robber will be forced to leave $P$. This completes the proof.
\end{proof}

Our tools for guarding geodesic paths are in place. Now we will devise a strategy for using guarded geodesic paths to surround the robber. We will begin by enclosing the robber's region with two guarded paths. We will iteratively choose new paths to guard in order to make the robber's region smaller. Furthermore, we will always keep the robber's region enclosed by at least one path that requires only two cops to guard. Eventually, we will restrict the robber's region to a single vertex.  When the robber's region consists a single vertex $r$, every neighbor of $r$ in $G$ will belong to a guarded path. Furthermore, by construction, a path $P$ is guarded if and only if a set of cops moves along $P$ in such a way that whenever the robber occupies a vertex $v$ that is adjacent to $P$, all neighbors of $v$ in $P$ are occupied by cops. Therefore, when we say that each neighbor of $r$ belongs to a guarded path, this will imply that each neighbor of $r$ is occupied by a cop. Therefore, to show that we can surround the robber, we only need to show that we can reduce the robber's territory to a single vertex.

The following lemma shows that a path guarded by three cops can be exchanged for another guarded path using at most two extra cops.

\begin{lemma}
Let $G$ be a planar graph with a fixed drawing in the plane. Let $P_1, P_2 \subseteq G$ be two $(\alpha,\beta)$-paths in $G$, where $\alpha,\beta \in V(G)$. Let $A$ be a component of $G \setminus (P_1 \cup P_2)$ enclosed by $P_1 , P_2$. Suppose that for $i \in \{1,2\}$, $P_i$ is geodesic with respect to $P_i \cup A$. Suppose that the robber occupies a vertex in $A$. Suppose that $P_1$ is not geodesically closed with respect to $P_1 \cup A$, and three cops $C_1, C_2, C_3$ guard $P_1$ using the strategy in Lemma \ref{lemmaThreeCops}. Suppose that $P_2$ is guarded by two cops. Then there exists an $(\alpha,\beta)$-path $P_3 \subseteq A \cup P_1 $ such that $C_1, C_2, C_3$, and two other cops $C_4, C_5$ can guard $P_3$ without the robber reaching $P_1$, and such that after guarding $P_3$, the robber is restricted to a region $B \subsetneq A$. Furthermore, the cops can confine the robber to $B$ by guarding two $(\alpha,\beta)$-paths, at least one of which can be guarded with at most two cops.
\label{lemmaGenSwitch}
\end{lemma}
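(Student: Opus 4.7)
The plan is to exploit the failure of $P_1$'s geodesic closedness to produce a shortcut inside $A$, then splice it into $P_1$ to obtain $P_3$. Since $P_1$ is not geodesically closed in $P_1 \cup A$, there exist $v_i, v_j \in V(P_1)$ with $i < j$ together with a geodesic $(v_i, v_j)$-path $Q$ in $P_1 \cup A$ such that $Q \not\subseteq P_1$. Among all such shortcuts, I would fix one with $j - i$ minimum and, among those, one for which the sub-region enclosed by $Q$ and $P_1[v_i, v_j]$ is minimal; since $P_1$ itself is geodesic in $P_1 \cup A$, this forces $|Q| = j - i$, and by minimality the interior of $Q$ lies in $A \setminus V(P_1)$. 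I set
\[
P_3 := P_1[\alpha, v_i] \cup Q \cup P_1[v_j, \beta],
\]
an $(\alpha, \beta)$-path of length $|P_1|$ contained in $A \cup P_1$. Using the fixed planar drawing, $Q$ splits $A$ into an inner region $A'$ bounded by $Q$ and $P_1[v_i, v_j]$, and an outer region $B$ bounded by $P_3$ and $P_2$. A short minimality argument should show that $Q$ is geodesically closed in $Q \cup A'$: any geodesic shortcut of $Q$ inside $A'$ could be spliced into $P_1$ to contradict either the minimality of $j - i$ or the minimality of the enclosed region. Hence, by Lemma \ref{lemmaTwoCops}, two cops suffice to guard $Q$ from within $A'$ once positioned at $v_i$ and $v_j$.

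Next, I would describe the transition. Cops $C_1, C_2, C_3$ continue to stalk the robber's shadow on $P_1$ throughout, so the robber never reaches $P_1$. Cops $C_4, C_5$ travel along the already-guarded $P_2$ (starting from $\alpha$ or $\beta$, which lie on both $P_1$ and $P_2$) to reach $v_i$ and $v_j$, then begin stalking the robber's shadow on $Q$ via the two-cop strategy. During this finite preparation phase, $P_1$ and $P_2$ remain guarded, so the robber stays in $A$. Once $C_4, C_5$ are in position, the robber is simultaneously separated from $P_1$, $P_2$, and $Q$, so he is confined to one of the two sub-regions $A'$ or $B$. By selecting $Q$ after observing the robber's position (or by cycling through candidate minimum-span shortcuts), we may arrange for the robber to wind up in $B$, so that the new territory $B \subsetneq A$ is bounded precisely by $P_3$ and $P_2$.

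Finally, the claim that at least one bounding path of $B$ is 2-cop-guardable follows immediately from $P_2$. Since $P_2$ was already being guarded by two cops, it was geodesically closed in $P_2 \cup A$; as $B \subseteq A$, every geodesic in $P_2 \cup B$ between two vertices of $P_2$ is also a path in $P_2 \cup A$, and by the geodesic closedness of $P_2$ in $P_2 \cup A$ it must lie entirely in $P_2$. Thus $P_2$ remains geodesically closed in $P_2 \cup B$, and Lemma \ref{lemmaTwoCops} still yields a two-cop guarding strategy on $P_2$.

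The main obstacle, as I see it, lies in the second step: guaranteeing that the robber actually ends up in $B$ rather than in the inner region $A'$, since if the robber is trapped in $A'$ then the bounding $(\alpha,\beta)$-paths are $P_1$ and $P_3$, neither of which is obviously 2-cop-guardable. Handling this will likely require either an adaptive choice of the shortcut $Q$ depending on the robber's current side, or a herding strategy in which $C_4, C_5$ first temporarily block one side of a candidate shortcut before committing. The supporting minimality argument giving geodesic closedness of $Q$ in $Q \cup A'$ should be comparatively routine.
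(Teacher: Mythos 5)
Your construction of the shortcut and of $P_3$ is exactly the paper's: take a geodesic $(v_i,v_j)$-shortcut of minimum length and, among those, minimal enclosed region, and splice it into $P_1$. But there is a genuine gap, and it is precisely the one you flag at the end: the case where the robber ends up in the inner region $A'$ (the paper's $\aleph$). The paper's resolution, which you miss, is that the double minimality of the shortcut forces $P_1$ itself to be geodesically closed with respect to $P_1 \cup \aleph$: any geodesic between two vertices of $P_1$ lying in $P_1\cup\aleph$ but not in $P_1$ would be a geodesic shortcut contradicting the choice of $S$ as shortest (and of $\aleph$ as minimal). Hence if the robber lands in $A'$, the two cops $C_1,C_2$ already on $P_1$ suffice to guard it by Lemma \ref{lemmaTwoCops}, $P_3$ is the three-cop path, and the conclusion holds with $B=\aleph$. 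You instead establish geodesic closedness of the wrong path ($Q$ inside $A'$, which is not one of the two bounding $(\alpha,\beta)$-paths of $A'$) and then try to avoid the inner case altogether. That avoidance cannot work: the shortcut is essentially determined by the graph, the robber chooses which side of it to occupy, and neither an ``adaptive choice'' of $Q$ nor a ``herding'' maneuver is available or needed. The lemma's conclusion is deliberately symmetric in the two outcomes, and the proof must accept whichever region the robber selects.

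A secondary problem is the transition phase. Routing $C_4,C_5$ ``along $P_2$'' to reach $v_i,v_j$ does not parse, since $v_i,v_j$ are interior vertices of $P_1$; and two cops stalking a shadow on $Q$ alone cannot keep the robber off $Q$ while the robber still has access to both sides of it, because Lemma \ref{lemmaTwoCops} requires geodesic closedness with respect to the robber's entire current territory, which at that moment is all of $A$, not $A'$. The paper handles this by having $C_4,C_5$ track the robber's $P_3$-shadow (distance from $\alpha$), keeping $C_1,C_2,C_3$ on their $P_1$-strategy while $C_3$ races to $v_j$, and then checking case by case that some three of the five cops always end up executing the Lemma \ref{lemmaThreeCops} strategy on $P_3$ before the robber can cross it. Your final paragraph on $P_2$ remaining two-cop guardable for the shrunken region is fine, though it is immediate from the hypothesis that $P_2$ is already guarded by two cops, without appeal to geodesic closedness.
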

\begin{proof}
Let $P_1 = (\alpha = v_0, \dots, v_k = \beta)$. Whenever the robber occupies a vertex at distance $d$ from $v_0$, we say that the robber has a $P_1$-shadow at $v_d$. By the strategy in Lemma \ref{lemmaThreeCops}, $C_1,C_2,C_3$ stalk the robber's $P_1$-shadow on each turn. We may assume without loss of generality that on each turn, if the robber's $P_1$-shadow is at $v_d$ ($1 \leq d \leq k-1$), then $C_1$ occupies $v_{d-1}$, $C_2$ occupies $v_d$, and $C_3$ occupies $v_{d+1}$. If the robber's shadow occupies $v_k$, we may assume that $C_1$ occupies $v_{k-1}$ and $C_2,C_3$ occupy $v_k$. As $P_1$ is guarded, the robber's shadow cannot occupy $v_0$.

As $P_1$ is not geodesically closed in $A \cup P_1$, we can choose a geodesic (w.r.t. $A \cup P_1$) path $S \not \subseteq P_1$, with endpoints $v_i, v_j \in P$ $(i < j)$, such that $S$ is of shortest length out of all such geodesic paths. Let $P_1(i,j) = (v_i, v_{i+1}, \dots, v_j)$, and let $S = (v_i, w_{i+1}, \dots, w_{j-1}, v_j)$. Let $\aleph$ be the region enclosed by $S$ and $P_1(i,j)$. We may let $S$ be chosen out of all $(v_i,v_j)$-geodesics to minimize $\aleph$ and hence assume that $P_1 \cup \aleph$ does not contain any $(v_i,v_j)$-geodesic besides $P_1(i,j)$. (Such a path $S$ is called a \textit{bypath} in \cite{Sebastian}, in which S. Gonz\'{a}lez applies a similar technique to the active variant of cops and robbers.)

We claim that for any vertex $x \in A$, if $x$ is adjacent to a vertex $v_l \in P(i,j) \setminus \{v_i,v_j\}$, then $dist(v_0,x) \in \{l, l+1\}$. We know from Lemma \ref{lemma3} that $dist(v_0,x) \in \{l-1,l, l+1\}$. Suppose for the sake of contradiction that $dist(v_0,x) = l-1$. Then $x$ must belong to a geodesic path $S' = (v_m, \dots, x, v_l)$, where $v_m$ is chosen to make $S'$ as short as possible. By the minimality of $S$, $m < i$, and so $S'$ must cross $S$ at some vertex $w_p$. As $S$ and $S'$ are geodesics, it then follows that $(v_i, \dots, w_p, \dots, v_l)$ is a geodesic shorter than $S$, a contradiction. Thus the claim is proven.

We define $P_3 = (v_0, v_1, \dots, v_i, w_{i+1}, \dots, w_{j-1}, v_j, \dots, v_k)$. For convenience, we rename the vertices in $P_3$ so that $P_3 = (y_0, y_1, \dots, y_k)$. If the robber occupies a vertex at distance $d$ from $v_0$, then we say that the robber has a $P_3$-shadow at $y_d$. On each turn, the robber's $P_3$-shadow either stays put or moves to an adjacent vertex on $P_3$; hence, after a finite number of moves, we can place two cops $C_4,C_5$ at $y_{d-1},y_{d}$ (where $y_d$ is the position of the robber's $P_3$-shadow) on every turn. We execute such a strategy with $C_4, C_5$. Then, on each subsequent move after $C_4, C_5$ first reach the correct positions in relation to the robber's $P_3$-shadow, we move $C_3$ on $P_1$ toward the vertex $v_{j}$. We note that $C_3$ will reach $v_j$ before the robber's $P_1$-shadow can reach $v_j$. We let $C_1,C_2$ continue their original strategy on $P_1$.

We claim that if the robber's $P_3$-shadow ever occupies a vertex of $P_1$, then we have three cops in position to guard the path $P_3$ by the strategy in Lemma \ref{lemma3}. If the robber's $P_3$ shadow occupies a vertex of $P_1$, then one of two cases must have occured. \\
Case 1: The robber's $P_3$-shadow moves from $w_{i+1}$ to $v_i$. Then $C_1, C_2$ move to $v_{i-1}, v_i$, and $C_5$ remains at $w_{i+1}$.  The robber does not occupy a vertex of $P_3$, and $C_1,C_2,C_5$ stalk the robber's $P_3$-shadow; hence $C_1,C_2,C_5$ are guarding $P_3$ as in Lemma \ref{lemma3}. 

Case 2: The robber's $P_3$-shadow moves from $w_{j-1}$ to $v_j$. Then $C_3$ moves to $v_{j+1}$, and $C_4,C_5$ move to $w_{j-1},v_j$. The robber does not occupy a vertex of $P_3$, and $C_4,C_5,C_3$ stalk the robber's shadow on $P_3$; hence $C_4,C_5,C_3$ are guarding $P_3$ as in Lemma \ref{lemma3}. 

In both cases, three cops successfully guard $P_3$. If the robber's region is enclosed by $P_1,P_3$, then the robber's region is $\aleph$. $P_3$ is guarded by $3$ cops, and $P_1$ is geodesically closed with respect to $P_1 \cup \aleph$; hence the robber's region is enclosed by two $(\alpha,\beta)$-paths, one of which is guarded by only two cops $C_1,C_2$, and the lemma is proven. If the robber's region is enclosed by $P_2, P_3$, then we call the robber's region $B$. $P_3$ is guarded by $3$ cops, and $P_2$ is guarded by two cops by assumption, and the lemma is proven.

Hence we arrive at a point in which the robber's shadow must lie on the subpath $(w_{i+1},w_{i+2} \dots, w_{j-2}, w_{j-1})$. At this point, we move $C_3$ on $P_3$ toward the robber's shadow until $C_3$ and $C_4,C_5$ stalk the robber's shadow on $P_3$. At this point, one of two cases occurs: \\ \\
Case 1: The robber occupies a vertex in the region enclosed by $P_2$ and $P_3$. In this case, $P_2$ is guarded by two cops, and we have three cops $C_3,C_4,C_5$ guarding $P_3$. In this case, the proof is complete. \\ \\
Case 2: The robber occupies a vertex in the region $\aleph$ enclosed by $P_1$ and $P_3$. In this case, we have three cops $C_3,C_4,C_5$ guarding $P_3$. Additionally, $P_1$ is geodesically closed with respect to $P_1 \cup \aleph$, so $C_1,C_2$ can guard $P_1$ by continuing their current strategy. In this case, the proof is complete.

\end{proof}
With this lemma in place, we can prove an upper bound for the surrounding cop number of planar graphs. 
\begin{theorem}
Let $G$ be a planar graph. Then $s(G) \leq 7$.
\label{thmPlanar}
\end{theorem}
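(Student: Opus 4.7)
The plan is to apply Lemma \ref{lemmaGenSwitch} iteratively, using seven cops to shrink the robber's enclosed region down to a single vertex. At that stage, by the remarks preceding Lemma \ref{lemmaGenSwitch}, every neighbor of the robber's vertex lies on a guarded path and is therefore occupied by a cop, so the robber is surrounded. The invariant I maintain throughout is that the robber is trapped in a region $A$ bounded by two $(\alpha, \beta)$-paths, one geodesic in its subgraph and guarded by three cops (via Lemma \ref{lemmaThreeCops}), the other geodesically closed in its subgraph and guarded by two cops (via Lemma \ref{lemmaTwoCops}); the remaining two of the seven cops serve as ``spares'' during each switching step.

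For the initialization, fix a planar embedding of $G$. After the robber is placed, I choose vertices $\alpha, \beta$ and two $(\alpha, \beta)$-paths $P_1, P_2$ bounding a region $A$ containing the robber, where $P_1$ is a geodesic of $G$ (guarded by three cops) and $P_2$ is chosen \emph{extremally} in the planar embedding among all $(\alpha, \beta)$-geodesics of $G$, on the side of the lens opposite to $A$. Extremality should force $P_2$ to be geodesically closed in $P_2 \cup A$, since any shortcut geodesic through $A$ would produce an $(\alpha, \beta)$-geodesic strictly further on the wrong side, contradicting extremality. Moving the seven cops to their guarding positions on $P_1$ and $P_2$ takes only finitely many turns by the shadow-stalking strategies of Lemmas \ref{lemmaTwoCops} and \ref{lemmaThreeCops}. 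Once the enclosure is established, five cops actively guard the boundary and two are free for the next step.

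For the iteration, I repeatedly apply Lemma \ref{lemmaGenSwitch} with the three-cop-guarded path in the role of $P_1$ and the two-cop-guarded path in the role of $P_2$. Each application uses the two spare cops, yields a new $(\alpha, \beta)$-path $P_3$, and produces a strictly smaller enclosed region bounded by $P_3$ together with one of the previous paths. Crucially, Lemma \ref{lemmaGenSwitch} asserts that the new enclosure again has one $(\alpha, \beta)$-path guardable by two cops, so the invariant is preserved and two cops are freed again for the next iteration. Since $G$ is finite and each iteration strictly shrinks the enclosed region, after finitely many applications the region is a single vertex, and the robber is surrounded.

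The main obstacles are (i) the initialization, and (ii) checking at each step that Lemma \ref{lemmaGenSwitch}'s hypothesis --- that the three-cop-guarded path is not geodesically closed in its subgraph --- actually holds. For (i), the extremal geodesic argument in the planar embedding should yield geodesic closedness of $P_2$ in $P_2 \cup A$, but this requires careful use of the embedding. For (ii), if both bounding paths ever become geodesically closed in their respective subgraphs while $A$ still contains interior vertices, an auxiliary argument is needed: one can pick an interior geodesic of $P_i \cup A$ with endpoints on $P_i$ (which must exist if $A$ has interior, and contradicts the assumed geodesic closedness of $P_i$, or else splits $A$ into a smaller region that can be handled directly). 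Degenerate initial placements, such as the robber occupying a low-degree vertex already adjacent to the enclosure, should be handled by ad hoc arguments with strictly fewer than seven cops.
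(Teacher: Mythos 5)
Your high-level plan --- maintain a region bounded by a three-cop geodesic path and a two-cop geodesically closed path, keep two spares, and iterate Lemma \ref{lemmaGenSwitch} until the region is a single vertex --- is exactly the skeleton of the paper's argument. However, the two places you flag as ``obstacles'' are precisely where your proposal has genuine gaps, and neither of your sketched fixes works.

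First, the initialization. Choosing $P_2$ extremally among all $(\alpha,\beta)$-geodesics ``on the side of the lens opposite to $A$'' is geometrically incoherent as stated: if all $(\alpha,\beta)$-geodesics (including $P_1$) lie on the far side of $P_2$ from $A$, then $P_1$ does not bound $A$ at all; and if instead the robber sits \emph{inside} the lens spanned by the extreme geodesics, neither bounding geodesic need be geodesically closed toward the robber's region, so you would need $3+3+2=8$ cops --- this is exactly why the paper's toroidal bound is $8$ (Observation \ref{obs8}) rather than $7$. The paper avoids the issue entirely by taking $P_2$ to be a single edge $uv$ and $P_1$ a $(u,v)$-geodesic of $G\setminus uv$: an edge is trivially geodesically closed and guarded by two cops, so the invariant holds from the start with five guarding cops. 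Second, the case where both bounding paths become geodesically closed. Your claim that a nonempty interior forces a geodesic of $P_i\cup A$ between two vertices of $P_i$ through $A$ (contradicting closedness) is false: both paths can be geodesically closed while $R_i$ is large (e.g.\ when $R_i$ attaches to the boundary through few vertices, or simply admits no shortcuts). The paper needs two further cases here: if some $(\alpha,\beta)$-path distinct from $P_1,P_2$ exists in $P_1\cup P_2\cup R_i$, guard a shortest one with three cops (total $2+2+3=7$) and split the region; if none exists, the region hangs off a single vertex $x$ of one path, and the cops restart the invariant from a new edge $(x,y)$ inside $R_i$. Without these cases the induction does not close, so the proposal as written does not yet prove the theorem.
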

\begin{proof}
We will play the game in stages. In each Stage $i$ of the game, we will have two paths $P_1$ and $P_2$ with common endpoints that are guarded by cops in such a way that the robber is unable to access $P_1$ and $P_2$. We will define the robber's region $R_i$ at Stage $i$ as the component of $G \setminus (P_1 \cup P_2)$ that the robber occupies. We will show that at each stage, we can reduce the robber's region to a new region $R_{i+1} \subsetneq R_i$.

If $G$ is a tree, then by \cite{Burgess}, $s(G) = 2$, and we are done. Otherwise, we begin the game at Stage $1$. As $G$ is not a tree, there exists an edge $(uv) \in E(G)$ such that a geodesic path exists from $u$ to $v$ in the graph $G \setminus (uv)$.  We let $P_2$ be the path $(u,v)$, and we let $P_1$ be any $(u,v)$-path geodesic with respect to $G \setminus (uv)$. $P_2$ is of length one and can clearly be guarded with two cops. $P_1$ is a geodesic path with respect to $G \setminus P_2$, and so $P_1$ can be guarded with three cops by Lemma \ref{lemmaThreeCops}. At this point, the component of the graph $G \setminus (P_1 \cup P_2)$ that the robber occupies is called $R_1$. We may redraw $G$ so that $R_1$ is enclosed by $P_1, P_2$

We describe the strategy that we follow at each Stage $i$ of the game.  \\ \\
Case 1: The robber's region $R_i$ is enclosed by two guarded $(\alpha,\beta)$-paths $P_1,P_2$. $P_1$ is not geodesically closed with respect to $P_1 \cup R_i$, and $P_2$ is geodesically closed with respect to $P_2 \cup R_i$. 

Let $P_1$ be guarded by three cops $C_1,C_2,C_3$. Let $P_2$ be guarded by cops $C_4,C_5$. Then by Lemma \ref{lemmaGenSwitch}, we can use two extra cops $C_6,C_7$ and replace $P_1$ with an $(\alpha,\beta)$-path $P'_1$ so that at most three cops guard $P'_1$, and such that there exists an $(\alpha,\beta)$-path $P'_2$ guarded by at most two cops such that the robber is restricted to a component $R_{i+1}$ of $G \setminus (P'_1 \cup P'_2)$ with $R_{i+1} \subsetneq R_i$. Depending on whether or not $P_1'$ is geodesically closed with respect to $P_1' \cup R_{i+1}$, this brings us to Case 1, 2, or 3. \\ \\
Case 2: The robber's region $R_i$ is enclosed by two guarded $(\alpha,\beta)$-paths $P_1,P_2$. $P_1$ is geodesically closed with respect to $P_1 \cup R_i$, and $P_2$ is geodesically closed with respect to $P_2 \cup R_i$. There exists an $(\alpha,\beta)$-path in $P_1 \cup P_2 \cup R_i$ distinct from $P_1$ and $P_2$. 

We choose a shortest $(\alpha,\beta)$-path in $P_1 \cup P_2 \cup R_i$, and we call this path $P_3$. We guard $P_3$ with at most three cops. $P_3$ divides the region $R_i$ into two parts, and hence the robber's region is either enclosed by $P_1$ and $P_3$, or the robber's region is enclosed by $P_2$ and $P_3$. In both cases, the robber's region is restricted to a region $R_{i+1} \subsetneq R_i$, and one of the paths enclosing $R_{i+1}$ is guarded by at most two cops. Depending whether or not $P_3$ is geodesically closed with respect to $P_3 \cup R_{i+1}$, this brings us to Case 1, 2, or 3. \\ \\
Case 3: The robber's region $R_i$ is enclosed by two guarded $(\alpha,\beta)$-paths $P_1,P_2$. $P_1$ is geodesically closed with respect to $P_1 \cup R_i$, and $P_2$ is geodesically closed with respect to $P_2 \cup R_i$. There exists no $(\alpha,\beta)$-path in $P_1 \cup P_2 \cup R_i$ distinct from $P_1$ and $P_2$. 

In this case, without loss of generality, $P_1$ has only one vertex $x  \in V(P_1)$ adjacent to the robber's region, and $P_2$ is not adjacent to the robber's region. Then the robber can be restricted to his region simply by placing a cop at $x$. We thus place two cops $C_1, C_2$ at $x$ and confine the robber to his region. If $x$ has only one neighbor $y$ in the robber's region, then we move $C_1$ to $y$ and move $C_2$ to $y$ on the next turn. We may continue this process until $C_1, C_2$ occupy a vertex $x$ that has at least two neighbors $y,z$ in the robber's region $R_i$. Then we choose $P'_2 = (x,y)$, and we let $P'_1$ be an $(x,y)$-geodesic in $R_i \setminus (xy)$. We guard $P_1'$ with at most three cops and we may redraw $R_i$ so that the robber is enclosed by $P'_1,P'_2$. Then the robber is confined to a region $R_{i+1} \subsetneq R_i$, and depending on whether or not $P_1'$ is geodesically closed with respect to $P_1' \cup R_{i+1}$, this brings us to Case 1, 2, or 3.

If we continue this process, we will eventually reach a point in which the robber's region contains a single vertex, at which point the robber is surrounded.
\end{proof}

\begin{theorem}
There exists a planar graph $G$ with $s(G) \geq 6$.
\end{theorem}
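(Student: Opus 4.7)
The plan is to exhibit a specific planar graph $G$ together with a strategy for the robber that defeats every team of $5$ cops. A natural candidate is the icosahedral graph $I$---the $1$-skeleton of the regular icosahedron---which is planar, vertex-transitive, and $5$-regular, has $12$ vertices, and has the property that the neighborhood of every vertex induces a $5$-cycle. The key structural feature we would exploit is that surrounding any vertex of $I$ requires all $5$ cops to simultaneously occupy the five neighbors of that vertex, so the cops have no slack at all.

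I would first verify that the robber has a valid starting vertex: since distinct vertices of $I$ have distinct open neighborhoods, no placement of $5$ cops can surround more than one vertex, so at least $11$ of the $12$ vertices are available to the robber. The main task is then to describe a move-by-move robber strategy. After each cops' move, if the robber is at a non-surrounded vertex $v$, he picks some vertex $u$ that is either $v$ itself or a neighbor of $v$, avoiding cop-occupied vertices, such that the cops cannot surround $u$ on their next turn. By a Hall-type matching argument, the cops can surround $u$ on the upcoming move if and only if there is a perfect matching between the five cop positions and the five neighbors of $u$ in which each cop is matched to a neighbor of $u$ within one step of its current position. Hence it suffices to find a safe $u$ with some neighbor $w$ that is not within one step of any cop, so that Hall's condition fails.

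The main obstacle is showing that such a safe $u$ always exists. I would use the large automorphism group of $I$ to normalize cop configurations and reduce the analysis to a manageable number of cases. The guiding intuition is that whenever the cops cluster near a vertex $v$---for example, by occupying the $5$-cycle consisting of $v$'s neighbors---the antipode $v'$ of $v$ together with its own five neighbors lies beyond the cops' one-step reach, so the robber can always retreat toward the antipodal region. I would formalize this as an invariant, such as the persistent existence in the robber's closed neighborhood of a vertex whose own neighborhood contains some vertex beyond cop reach, and verify that the invariant can be maintained across each turn. The remaining technical work is a case analysis leveraging the distance and triangulation structure of $I$, driven by the fact that $5$ cops simply cannot cover enough of the icosahedron's vertex set to threaten every vertex of any closed neighborhood simultaneously.
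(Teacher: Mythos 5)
Your proposal has a genuine gap at its central step. Everything rests on the claim that, on the icosahedron, after every cop move the robber can find a ``safe'' vertex $u$ in his closed neighborhood --- one that the five cops cannot surround on their next move. You explicitly defer this to ``an invariant \dots and a case analysis,'' but this is precisely the hard part, and it is far from obvious that it goes through: the icosahedron is $5$-regular, so five cops have \emph{exactly} enough pieces to occupy an entire open neighborhood, and no counting argument can rule out a win. One would need a fully verified escape strategy (tracking, after each robber move, which neighbors of which candidate vertices lie outside the cops' one-step reach, together with the constraint that the robber may not end a turn on a cop), and the antipodal-retreat intuition does not by itself supply this. As written, the proof establishes only $s(I)\geq 5$, which follows from the minimum degree, not the claimed $s(I)\geq 6$.

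The paper avoids this difficulty entirely by choosing a graph where the lower bound \emph{is} a counting argument. It takes the truncated icosahedron and adds a vertex inside each face joined to all vertices of that face; in the resulting planar graph every vertex has degree $5$ or $6$, and every degree-$6$ vertex has five neighbors of degree $6$. A robber sitting on a degree-$6$ vertex can never be surrounded by five cops (six neighbors, five cops), so the only threat is a cop stepping onto the robber's vertex; but then that cop occupies the robber's vertex rather than a neighbor, so at most four cops sit among the five degree-$6$ neighbors, and the robber escapes to an unoccupied one. If you want to salvage your approach, the cleanest fix is to switch to a construction of this type --- one where the robber can stay forever on vertices whose degree strictly exceeds the number of cops --- rather than attempting the delicate $5$-regular analysis on the icosahedron.
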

\begin{proof}
Let $H$ be the truncated icosahedron, that is, the polyhedron whose shape resembles a soccer ball or a $C_{60}$ molecule. Let $G$ be the graph obtained by adding a vertex $v$ to each face $f$ of $H$ and adding an edge from $v$ to each vertex of $f$. We note that in $G$, all vertices have degree $5$ or $6$, and every degree $6$ vertex has five neighbors of degree $6$.

Suppose we have five cops. We let the robber execute the following strategy. The robber begins on a vertex $r$ of degree $6$ and waits for a cop to occupy $r$. As $r$ has degree $6$, and as we play with five cops, the robber will not be surrounded before a cop moves to $r$. If a cop moves to $r$, then at this point, at most four of the robber's neighboring vertices are occupied by cops. The robber then chooses an unoccupied neighbor of degree $6$ and moves to this vertex. The robber repeats this strategy indefinitely and wins. Therefore, $s(G) \geq 6$.
\end{proof}

Although our methods give an upper bound of $7$ for the surrounding cop number of planar graphs, we are currently unable to find examples of planar graphs with surrounding cop number equal to $7$. This leaves the question of whether $s(G) \leq 6$ for planar graphs $G$, or whether a planar graph $G$ with $s(G) = 7$ exists. 

\section{Bipartite planar graphs}
In this section, we consider the game of Surrounding Cops and Robbers played on bipartite planar graphs. We establish an upper bound of $4$ for the surrounding cop number of bipartite planar graphs using the ideas of the previous section. We also show that this upper bound is tight. FIrst, we will establish bipartite versions of the previous lemmas.

\begin{lemma}
Let $G$ be a bipartite graph, and let $P = (v_0, v_1, \dots, v_k)$ be a geodesic path in $G$. Suppose that a vertex $w \in V(G)$ is adjacent to $v_j \in V(P)$. Then either $dist(v_0, w) = j-1$ or $dist(v_0,w) = j+1$.
\label{lemmaPLM}
\end{lemma}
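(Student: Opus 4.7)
The plan is to invoke Lemma \ref{lemma3} to immediately narrow $dist(v_0,w)$ to one of the three values $j-1, j, j+1$, and then use the bipartite structure of $G$ to rule out the middle value $j$.

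First, I would note that since $P$ is geodesic, $dist(v_0, v_j) = j$. Since $G$ is bipartite with some bipartition $(X,Y)$, the parity of $dist(v_0, u)$ for any vertex $u$ is determined by the color class of $u$ relative to that of $v_0$: vertices of one color have even distance, vertices of the other color have odd distance (whenever the distance is finite, which it is for $v_j$ and for $w$, since $w$ is adjacent to $v_j$ and hence reachable from $v_0$). Thus $v_j \in X$ iff $j$ is even (assuming $v_0 \in X$), and analogously for other vertices.

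Since $w$ is adjacent to $v_j$ and $G$ is bipartite, $w$ and $v_j$ lie in opposite parts of the bipartition. Consequently $dist(v_0,w)$ has parity opposite to $j$, so $dist(v_0,w) \neq j$. Combining this with the bounds $j-1 \leq dist(v_0,w) \leq j+1$ from Lemma \ref{lemma3} yields $dist(v_0,w) \in \{j-1, j+1\}$, as required.

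The argument is essentially a one-line parity observation on top of Lemma \ref{lemma3}, so there is no real obstacle; the only care needed is to justify that distances from $v_0$ are well-defined and have the claimed parity behavior, which follows from bipartiteness applied to any shortest $v_0$-to-$u$ path.
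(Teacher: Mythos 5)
Your proof is correct and is essentially the same as the paper's: both first apply Lemma \ref{lemma3} to restrict $dist(v_0,w)$ to $\{j-1,j,j+1\}$ and then use bipartiteness to exclude the value $j$. The paper phrases the exclusion as the existence of an odd closed walk of length $2j+1$, while you phrase it via the parity of distances relative to the bipartition; these are the same observation.
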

\begin{proof}
By Lemma \ref{lemma3}, $dist(v_0,w) \in \{j-1,j,j+1\}$. If $dist(v_0,w) = j$, then there exists a closed walk $(v_0, \dots, v_j, w, \dots, v_0)$ of length $2j+1$, which contradicts the assumption that $G$ is bipartite. Hence $dist(v_0, w) = j-1$ or $dist(v_0,w) = j+1$.
\end{proof}

\begin{lemma}
Let $G$ be a bipartite graph. Let $P = (v_0, \dots, v_k)$ be a geodesic path of $G$. Then there exists a strategy using two cops such that after a finite number of moves, the robber is unable to reach $P$.
\label{lemmaBipPath}
\end{lemma}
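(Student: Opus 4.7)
The plan is to imitate the proof of Lemma \ref{lemmaThreeCops} almost verbatim, using the strengthening of Lemma \ref{lemma3} provided by Lemma \ref{lemmaPLM} to cut the cop count from three to two. I would define the robber's shadow on $P$ exactly as before: if the robber is at $w$, the shadow sits at $v_d$ where $d = \min(k, \mathrm{dist}(v_0, w))$. Since the robber's distance to $v_0$ changes by at most one per turn, the shadow moves by at most one vertex of $P$ per turn, so the standard chasing argument lets two cops reach any prescribed relative position in finitely many moves.

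The target stalking configuration is: when the shadow is at $v_d$ with $1 \le d \le k-1$, place $C_1$ at $v_{d-1}$ and $C_2$ at $v_{d+1}$, with natural degenerate placements at the endpoints (e.g.\ both cops at $v_1$ when the shadow is at $v_0$, and symmetrically at $v_k$). By the usual shadow-following argument, the cops can reach this configuration in finitely many turns and maintain it on every turn thereafter.

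Once stalking is established, the heart of the proof is a direct application of Lemma \ref{lemmaPLM}. If the robber occupies $w \notin V(P)$ adjacent to $v_j \in V(P)$ with $\mathrm{dist}(v_0, w) = d$, then Lemma \ref{lemmaPLM} gives $j \in \{d-1, d+1\}$, and in either case one of $C_1, C_2$ already occupies $v_j$. Hence the robber cannot move onto $P$ from outside, proving the lemma for this case.

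The main obstacle is that, unlike the three-cop situation, the two cops do not sit on the shadow vertex itself, so a robber who happens to be on $P$ when stalking begins is not immediately evicted; he could in principle sit at $v_d$ and cause a position to repeat. I would handle this by a brief pushing maneuver in which $C_1$ steps from $v_{d-1}$ onto $v_d$, forcing the robber to move; iterating this pushes the robber along $P$ toward an endpoint, where his only unoccupied neighbors lie off $P$, after which the stalking configuration can be restored. Bipartiteness is the key ingredient here: by Lemma \ref{lemmaPLM}, every neighbor of the robber on $P$ lies at distance $d \pm 1$ from $v_0$, so at every step of the push one of the two cops blocks the robber from re-entering $P$ at an unguarded vertex.
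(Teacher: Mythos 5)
Your proposal is correct and follows essentially the same route as the paper: the identical shadow/stalking setup, Lemma \ref{lemmaPLM} to show that a robber off $P$ at distance $d$ can only enter $P$ at the two guarded vertices $v_{d-1},v_{d+1}$, and the same pushing maneuver (one cop stepping onto the shadow vertex, driving the robber toward $v_0$) to handle a robber who starts on $P$. The paper's version of the push likewise tracks the configuration $(v_d,v_{d+1})$ until the robber leaves $P$ for good or reaches $v_0$, so no further comparison is needed.
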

\begin{proof}
We name our cops $C_1, C_2$. We define the robber's shadow as in the proof of Lemma \ref{lemmaThreeCops}. Following the discussion in the proof of Lemma \ref{lemmaThreeCops}, after a finite number of moves, we can reach a position in which the robber's shadow occupies $v_i$ ($i \leq k-1$) and $C_1, C_2$ occupy $v_{i-1}, v_{i+1}$, or we can reach a position in which the robber's shadow occupies $v_k$, and $C_1, C_2$ occupy $v_{k-1}, v_{k}$, or we can reach a position in which the robber's shadow occupies $v_0$, and $C_1, C_2$ occupy $v_{0}, v_{1}$. Furthermore, such a position can be achieved on each subsequent turn simply by following the robber's shadow with $C_1, C_2$. We call such a movement pattern \textit{stalking the robber's shadow on $P$}. 

We show that if the robber does not occupy a vertex of $P$ when $C_1,C_2$ begin stalking the robber's shadow on $P$, then the robber is unable to enter $P$. Suppose that the robber occupies a vertex $w$ that does not belong to $P$. If $w$ is not adjacent to $P$, then the robber cannot move onto $P$. If $w$ is adjacent to $P$ and $dist(v_0,w) = j$, let $u \in P$ be a neighbor of $w$. By Lemma \ref{lemmaPLM}, $u  \in \{v_{j-1}, v_{j+1}\}$. At this point, the robber's shadow occupies $v_j$, and hence by the strategy of $C_1, C_2$, $u$ is occupied by a cop. Thus the robber cannot move onto $P$.

It remains to show that if the robber occupies a vertex of $P$ when $C_1, C_2$ begin stalking the robber's shadow on $P$, then the robber can be forced off of $P$, and that $C_1$ and $C_2$ can stalk the robber's shadow after the robber leaves $P$.

Suppose that the robber occupies a vertex $v_j \in P$. Then the robber's shadow also occupies $v_j$, and $C_1$ and $C_2$ occupy $v_{j-1}$ and $v_{j+1}$. Without loss of generality, $C_1$ occupies $v_{j-1}$ and $C_2$ occupies $v_{j+1}$. We let $C_1$ move to $v_j$ and let $C_2$ stay put. If the robber's shadow moves to either $v_j$ or $v_{j+1}$, then the robber must have left $P$, and $C_1$ and $C_2$ can stalk the robber's shadow using the original strategy. Otherwise, the robber's shadow moves to $v_{j-1}$. Then $C_1$ and $C_2$ move to $v_{j-1}$ and $v_j$. If the robber's shadow moves to $v_{j-1}$ or $v_j$, then by the same argument, $C_1$ and $C_2$ can stalk the robber's shadow on $P$ and prevent the robber from accessing $P$. Otherwise, the robber's shadow again moves toward $v_0$. In this case, $C_1$ and $C_2$ can continue pushing the robber's shadow toward $v_0$ until either the robber's shadow can be stalked by $C_1$ and $C_2$ as in the previous discussion, or until the robber's shadow reaches $v_0$. If the robber's shadow reaches $v_0$, then this implies that the robber occupies $v_0$. Then $C_1$ and $C_2$ will occupy $v_0$ and $v_1$, and the robber's shadow will be forced to move to $v_1$, and the robber will be forced to leave $P$. Then $C_2$ can move to $v_2$, and $C_1,C_2$ can stalk the robber's shadow and prevent the robber from accessing $P$. 
\end{proof}

\begin{lemma}
Let $G$ be a bipartite graph. Let $P = (v_0, \dots, v_k) \subseteq G$ be a path. If $P$ is geodesically closed with respect to $G$, then there exists a strategy involving one cop such that after a finite number of moves, the robber is unable to enter $P$ from outside of $P$.
\label{lemmaOneCop}
\end{lemma}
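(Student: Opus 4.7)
The plan is to have a single cop $C_1$ guard $P$ by stalking a shifted version of the robber's shadow. The key observation is that in a bipartite graph with $P$ geodesically closed, any vertex $w \notin V(P)$ adjacent to some $v_j \in V(P)$ must satisfy $dist(v_0, w) = j + 1$. Indeed, Lemma~\ref{lemmaPLM} forces $dist(v_0, w) \in \{j-1, j+1\}$, and the value $j - 1$ is ruled out by geodesic closure: otherwise the concatenation $(v_0, \dots, w, v_j)$ would be a geodesic $(v_0, v_j)$-path not contained in $P$, contradicting geodesic closure. Equivalently, if the robber stands at a vertex $w \notin V(P)$ with $dist(v_0, w) = d$, the unique vertex of $P$ that $w$ can be adjacent to is $v_{d-1}$, so placing a single cop on $v_{d-1}$ already suffices to block the robber from stepping onto $P$.

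The strategy for $C_1$ is therefore to maintain, after each of his moves, the invariant that $C_1$ stands on $v_{d-1}$, where $d$ is the robber's current distance to $v_0$ (with the convention that $v_{d-1}$ is replaced by $v_0$ when $d \leq 1$ and by $v_k$ when $d \geq k + 1$; in both border cases the robber has no possible entry into $P$ anyway). Exactly as in the proofs of Lemmas~\ref{lemmaThreeCops} and~\ref{lemmaBipPath}, $C_1$ establishes and then maintains this position by moving one step along $P$ toward the target vertex on each of his turns. Since the target shifts by at most one per robber move and $C_1$ shifts by at most one per cop move, the gap between $C_1$ and the target is non-increasing; once $c = d - 1$ immediately after a cop move, matching the cop's step to the movement of the target preserves the invariant on every subsequent round, and the robber, standing at distance $d$ from $v_0$, then finds his unique possible entry point $v_{d-1}$ occupied by $C_1$.

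The main subtlety is verifying that $C_1$ actually catches up to the target in finitely many rounds, rather than only keeping pace with it. The natural potential $\phi = |c - (d-1)|$ is only weakly monotone under optimal play by $C_1$, so I would argue separately that $\phi$ can remain constant over a round only when the robber strictly increases (or strictly decreases) his distance to $v_0$ that round. Since $G$ is finite and $d$ is bounded by the diameter of $G$, such monotone behavior cannot persist indefinitely, so eventually $\phi$ must strictly decrease, and hence $\phi$ reaches $0$ in finitely many rounds, after which the invariant above takes over and the lemma follows.
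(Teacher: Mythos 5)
Your proposal is correct and follows essentially the same route as the paper: both hinge on the observation that bipartiteness (Lemma~\ref{lemmaPLM}) plus geodesic closure forces any vertex $w \notin V(P)$ adjacent to $v_j$ to satisfy $dist(v_0,w) = j+1$, so a single cop tracking the vertex $v_{d-1}$ (where $d$ is the robber's distance to $v_0$) blocks every possible entry point. Your extra care about the catch-up phase is detail the paper delegates to the discussion in Lemma~\ref{lemmaThreeCops}, not a different argument.
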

\begin{proof}
We define the shadow of the robber as in Lemma \ref{lemmaThreeCops}. By the discussion in the proof of Lemma \ref{lemmaThreeCops}, a single cop can reach a vertex $v_{j-1}$, where $v_j$ is the position of the robber's shadow, on every turn after a finite number of turns. We claim that by doing so, the cop prevents the robber from entering $P$ from outside of $P$.

Suppose that the robber occupies a vertex $w$ that is not in $P$ and is adjacent to $v_j \in P$. As $G$ is bipartite, either $dist(v_0, w) = j-1$ or $dist(v_0,w) = j+1$. Furthermore, as $P$ is geodesically closed, $dist(v_0, w) = j+1$; otherwise, $(v_0, \dots, w, v_j)$ is a geodesic path between two vertices in $P$ that is not contained in $P$, a contradiction. Thus we see that $dist(v_0,w) = j+1$, and the cop's strategy dictates that the cop occupy the vertex $v_j$. Therefore, the robber is unable to move to $v_j$, and hence the robber is unable to enter $P$.
\end{proof}

We will establish a path-switching lemma that is similar to Lemma \ref{lemmaGenSwitch}.

\begin{lemma}
Let $G$ be a planar bipartite graph with a fixed drawing in the plane. Let $P_1, P_2 \subseteq G$ be two $(\alpha,\beta)$-paths in $G$, where $\alpha,\beta \in V(G)$. Let $A$ be a component of $G \setminus (P_1 \cup P_2)$ enclosed by $P_1 \cup P_2$. Suppose that for $i \in \{1,2\}$, $P_i$ is geodesic with respect to $P_i \cup A$. Suppose that the robber occupies a vertex in $A$. Suppose that $P_1$ is not geodesically closed with respect to $P_1 \cup A$, and two cops $C_1, C_2$ are guarding $P_1$ using the strategy in Lemma \ref{lemmaBipPath}. Suppose that $P_2$ is guarded by one cop. Then there exists an $(\alpha,\beta)$-path $P_3 \subseteq A \cup P_1 $ such that $C_1, C_2$, and one other cop $C_3$ can guard $P_3$ without the robber reaching $P_1$, and such that after guarding $P_3$, the robber is restricted to a region $B \subsetneq A$. Furthermore, the cops can confine the robber to $B$ by guarding two $(\alpha,\beta)$-paths, at least one of which can be guarded with at most one cop.
\label{lemmaSwitch}
\end{lemma}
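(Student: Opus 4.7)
The plan is to mirror the path-switching argument of Lemma \ref{lemmaGenSwitch}, replacing each use of the general stalking tools with their bipartite counterparts (Lemmas \ref{lemmaBipPath} and \ref{lemmaOneCop}) and using the sharper bipartite distance constraint of Lemma \ref{lemmaPLM} wherever Lemma \ref{lemma3} was applied.

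First, I would construct the bypath exactly as before: since $P_1$ is not geodesically closed in $P_1 \cup A$, pick a shortest geodesic $S = (v_i, w_{i+1}, \ldots, w_{j-1}, v_j)$, with $i < j$, that is a bypath of $P_1$, and among such paths choose $S$ so that the enclosed subregion $\aleph$ is minimal. By this minimality, $P_1(i,j)$ is the unique $(v_i, v_j)$-geodesic inside $P_1 \cup \aleph$. Define $P_3$ by replacing $P_1(i,j)$ with $S$ inside $P_1$; since $|S| = j - i$, the path $P_3$ has length $k$ and is geodesic with respect to $P_3 \cup A$. Repeating the crossing-geodesic argument of Lemma \ref{lemmaGenSwitch} and combining it with Lemma \ref{lemmaPLM}, I would conclude that any $x \in A$ adjacent to an interior vertex $v_l$ of $P_1(i,j)$ satisfies $dist(v_0, x) = l+1$: bipartiteness rules out distance $l$, and the minimality of $S$ rules out distance $l-1$.

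Next I would perform the hand-off. Cops $C_1, C_2$ continue to execute the strategy of Lemma \ref{lemmaBipPath} on $P_1$, so that whenever the robber's $P_1$-shadow is at $v_d$ (with $1 \le d \le k-1$) they sit at $v_{d-1}, v_{d+1}$. The key observation is that whenever $d \notin \{i, i+1, \ldots, j\}$, these two positions already coincide with the prescribed $P_3$-stalking positions $y_{d-1}, y_{d+1}$. I would now bring in a third cop $C_3$ and walk it along $P_3$ into the detour $(w_{i+1}, \ldots, w_{j-1})$, arriving in advance of the $P_3$-shadow by a race argument analogous to the one in Lemma \ref{lemmaGenSwitch}. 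When the $P_3$-shadow first crosses into the detour (from $v_i$ to $w_{i+1}$ or from $v_j$ to $w_{j-1}$), I would swap: $C_3$ takes over the flank of the $P_3$-shadow inside the detour, and whichever of $C_1, C_2$ still sits at an endpoint $v_{i-1}$ or $v_{j+1}$ of $P_3$ completes the pair demanded by Lemma \ref{lemmaBipPath}. The refined distance claim above guarantees that the remaining cop on $P_1$ continues to block every potential transfer route during this transition, so the robber can reach neither $P_1$ nor $P_3$.

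Finally, after the swap the robber lies either in $\aleph$, enclosed by $P_1$ and $P_3$, or in the rest of $A$, enclosed by $P_2$ and $P_3$. In the first case, $P_1$ is geodesically closed with respect to $P_1 \cup \aleph$ by the minimality of $\aleph$, so Lemma \ref{lemmaOneCop} permits a single one of $C_1, C_2$ to continue guarding $P_1$; in the second case, $P_2$ is still guarded by the one cop assumed in the hypothesis. Either way, $B \subsetneq A$ is enclosed by two guarded $(\alpha,\beta)$-paths, one of which needs only one cop. The main obstacle is the transition window: making precise the claim that during the finitely many moves $C_3$ needs to reach its post, the bipartite parity constraint together with the sharpened distance formula $dist(v_0, x) = l+1$ keeps the robber from exploiting the gap left by $C_2$ (or $C_1$) momentarily leaving its Lemma \ref{lemmaBipPath} position.
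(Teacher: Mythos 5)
Your proposal follows essentially the same route as the paper's proof: the same bypath construction with $\aleph$ minimal, the same sharpened distance claim $dist(v_0,x)=l+1$ via Lemma \ref{lemmaPLM} and the minimality of $S$, the same hand-off of a third cop onto the detour $(w_{i+1},\dots,w_{j-1})$ with $C_1$ or $C_2$ completing the flanking pair when the $P_3$-shadow exits at $v_i$ or $v_j$, and the same endgame in which either $P_1$ is geodesically closed with respect to $P_1\cup\aleph$ (so one cop suffices by Lemma \ref{lemmaOneCop}) or $P_2$ is already guarded by one cop. The only detail the paper makes explicit that you leave implicit in your ``transition window'' remark is the case where the robber itself occupies a vertex of the detour when the $P_3$-stalking begins, which the paper resolves by having the two cops push the robber's $P_3$-shadow toward $v_i$ until the robber is forced off $P_3$.
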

\begin{proof}

Let $P_1 = (\alpha = v_0, \dots, v_k = \beta)$. Whenever the robber occupies a vertex at distance $d$ from $v_0$, we say that the robber has a $P_1$-shadow at $v_d$. By the strategy in Lemma \ref{lemmaBipPath}, $C_1,C_2$ stalk the robber's $P_1$-shadow on each turn. We may assume without loss of generality that on each turn, if the robber's $P_1$-shadow is at $v_d$ ($1 \leq d \leq k-1$), then $C_1$ occupies $v_{d-1}$, and $C_2$ occupies $v_{d+1}$. If the robber's shadow occupies $v_k$, we may assume that $C_1$ occupies $v_{k-1}$ and $C_2$ occupies $v_k$. As $P_1$ is guarded, the robber's shadow cannot occupy $v_0$.

As $P_1$ is not geodesically closed in $A \cup P_1$, we can choose a geodesic (w.r.t. $A \cup P_1$) path $S \not \subseteq P_1,$ with endpoints $v_i, v_j \in P$ $(i < j)$ such that $S$ is of shortest length out of all such geodesic paths. Let $P_1(i,j) = (v_i, v_{i+1}, \dots, v_j)$, and let $S = (v_i, w_{i+1}, \dots, w_{j-1}, v_j)$. Let $\aleph$ be the region enclosed by $S$ and $P_1(i,j)$. We may let $S$ be chosen out of all $(v_i,v_j)$-geodesics to minimize $\aleph$ and hence assume that $P_1 \cup \aleph$ does not contain any $(v_i,v_j)$-geodesic besides $P_1(i,j)$.

We claim that for any vertex $x \in A$, if $x$ is adjacent to a vertex $v_l \in P(i,j) \setminus \{v_i,v_j\}$, then $dist(v_0,x) = l+1$. We know from Lemma \ref{lemmaPLM} that $dist(v_0,x) \in \{l-1, l+1\}$. Suppose for the sake of contradiction that $dist(v_0,x) = l-1$. Then $x$ must belong to a geodesic path $S' = (v_m, \dots, x, v_l)$, where $v_m$ is chosen to make $S'$ as short as possible. By the minimality of $S$, $m < i$, and so $S'$ must cross $S$ at some vertex $w_p$. As $S$ and $S'$ are geodesics, it then follows that $(v_i, \dots, w_p, \dots, v_l)$ is a geodesic shorter than $S$, a contradiction. Thus the claim is proven.

We define $P_3 = (v_0, v_1, \dots, v_i, w_{i+1}, \dots, w_{j-1}, v_j, \dots, v_k)$. For convenience, we rename the vertices in $P_3$ so that $P_3 = (y_0, y_1, \dots, y_k)$. If the robber occupies a vertex at distance $d$ from $v_0$, then we say that the robber has a $P_3$-shadow at $y_d$. On each turn, the robber's $P_3$-shadow either stays put or moves to an adjacent vertex on $P_3$; hence, after a finite number of moves, we can place a cop $C_3$ at $y_{d}$ (where $y_d$ is the position of the robber's $P_3$-shadow) on every turn. We execute such a strategy with $C_3$, and then on each subsequent move after $C_3$ first reaches the correct position in relation to the robber's $P_3$-shadow, we move $C_2$ on $P_1$ toward the vertex $v_j$. We note that $C_2$ will reach $v_j$ before the robber's $P_1$-shadow can reach $v_j$. We let $C_1$ continue as normal.

We claim that if the robber's $P_3$-shadow ever occupies a vertex of $P_1$, then we have two cops in position to guard the path $P_3$ by the strategy in Lemma \ref{lemmaBipPath}. Indeed, if the robber's $P_3$ shadow occupies a vertex of $P_1$, then one of two cases must have occured. \\ \\
Case 1: The robber's $P_3$-shadow moves from $w_{i+1}$ to $v_i$. Then $C_1$ moves to $v_{i-1}$, and $C_3$ remains at $w_{i+1}$.  The robber does not occupy a vertex of $P_3$, and $C_1,C_3$ stalk the robber's $P_3$-shadow; hence $C_1,C_3$ are guarding $P_3$ as in Lemma \ref{lemmaBipPath}.  \\ \\
Case 2: The robber's $P_3$-shadow moves from $w_{j-1}$ to $v_j$. Then $C_2$ moves to $v_{j+1}$, and $C_3$ remains at $w_{j-1}$. The robber does not occupy a vertex of $P_3$, and $C_2,C_3$ stalk the robber's shadow on $P_3$; hence $C_2,C_3$ are guarding $P_3$ as in Lemma \ref{lemmaBipPath}. 

In both cases, two cops successfully guard $P_3$. If the robber's region is enclosed by $P_1,P_3$, then the robber's region is $\aleph$. $P_3$ is guarded by two cops, and $P_1$ is geodesically closed with respect to $P_1 \cup \aleph$; hence the robber's region is enclosed by two paths, one of which is guarded by only one cop $C_1$, and the lemma is proven. If the robber's region is enclosed by $P_2, P_3$, then we call the robber's region $B$. $P_3$ is guarded by two cops, and $P_2$ is guarded by one cops by assumption, and the lemma is proven.

Hence we arrive at a point in which the robber's shadow must exist on the subpath $(w_{i+1},w_{i+2} \dots, w_{j-2}, w_{j-1})$. At this point, we move $C_2$ on $P_3$ toward the robber's shadow until $C_2$ and $C_3$ stalk the robber's shadow on $P_3$. At this point, one of three cases occurs: \\ \\
Case 1: The robber occupies a vertex in the region enclosed by $P_2$ and $P_3$. In this case, $P_2$ is guarded by one cop, and we have two cops $C_2,C_3$ guarding $P_3$. In this case, the proof is complete. \\ \\
Case 2: The robber occupies a vertex in the region $\aleph$ enclosed by $P_1$ and $P_3$. In this case, we have two cops $C_2,C_3$ guarding $P_3$. Additionally, $P_1$ is geodesically closed with respect to $P_1 \cup \aleph$, so $C_1$ can guard $P_1$ by continuing its current strategy. In this case, the proof is complete. \\ \\
Case 3: The robber occupies a vertex $w_l$ of $P_3$. In this case, $C_2$ occupies $w_{l-1}$, and $C_3$ occupies $w_{l+1}$. Then $C_2$ moves to $w_l$, and $C_3$ remains at $w_{l+1}$. $C_1$ continues its strategy. If the robber's $P_3$-shadow moves to $w_l$ or $w_{l+1}$, then the robber moves off of $P_3$, and $C_2$ and $C_3$ can stalk the robber's $P_3$-shadow and bring us to Case 1 or Case 2. Otherwise, the robber's $P_3$-shadow moves toward $v_i$. In this case, $C_2$ and $C_3$ both move along $P_3$ toward $v_i$ and push the robber's $P_3$-shadow toward $v_i$. By repeating this process, either the robber's $P_3$-shadow will make a movement that allows itself to be stalked, giving us Case 1 or Case 2, or the robber's $P_3$-shadow will reach $w_{i+1}$, with $C_1$ occupying $v_i$, $C_3$ occupying $w_{i+1}$, and $C_2$ occupying $w_{i+2}$. At this point, the robber must move to a vertex that is not on $P_3$, and the robber's $P_3$-shadow can be stalked on the next move, giving us Case 1 or Case 2. Thus in this case, the proof is complete. 
\end{proof}

With this lemma in place, we can prove an upper bound for the surrounding cop number of bipartite planar graphs. 

\begin{theorem}
Let $G$ be a planar bipartite graph. Then $s(G) \leq 4$.
\label{thmBip}
\end{theorem}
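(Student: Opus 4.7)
The plan is to adapt the proof of Theorem \ref{thmPlanar} to the bipartite setting, taking advantage of the three cop-saving lemmas above. Lemma \ref{lemmaBipPath} guards a geodesic path with $2$ cops rather than $3$; Lemma \ref{lemmaOneCop} guards a geodesically closed path with $1$ cop rather than $2$; and Lemma \ref{lemmaSwitch} performs a path swap at a cost of only $1$ extra cop rather than $2$. If one mimics the stage-by-stage scheme of Theorem \ref{thmPlanar} with these drop-in replacements, the worst-case cop budget falls from $3+2+2 = 7$ to $2+1+1 = 4$, yielding the claimed bound.

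I would handle the initial setup exactly as in the planar proof. If $G$ is a tree, then $s(G) \leq 2$ by \cite{Burgess}; otherwise, pick an edge $(uv)$ that lies on a cycle, let $P_2 = (u,v)$, and let $P_1$ be a $(u,v)$-geodesic in $G \setminus (uv)$. Then $P_2$, being of length one, is geodesically closed and is guarded by $1$ cop via Lemma \ref{lemmaOneCop}; $P_1$ is geodesic and is guarded by $2$ cops via Lemma \ref{lemmaBipPath}. Together they enclose the robber's region $R_1$. I then maintain the invariant at every stage that $R_i$ is bounded by two $(\alpha,\beta)$-paths, each geodesic with respect to its union with $R_i$, and that at least one of the two paths is geodesically closed (hence guarded by only $1$ cop).

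From here I split into the same three cases as in Theorem \ref{thmPlanar}. In Case~1, where $P_1$ is not geodesically closed (guarded by $2$ cops) and $P_2$ is geodesically closed (guarded by $1$ cop), Lemma \ref{lemmaSwitch} applies directly: it produces a new region $R_{i+1} \subsetneq R_i$ using $1$ additional cop, keeping the running total at $4$ and preserving the invariant. In Case~2, where both paths are geodesically closed ($1+1$ cops) and some other $(\alpha,\beta)$-path lies in $P_1 \cup P_2 \cup R_i$, I take $P_3$ to be a shortest such path; it is automatically geodesic, so Lemma \ref{lemmaBipPath} guards it with $2$ cops, and the new region is bounded by $P_3$ and one of $\{P_1, P_2\}$, which remains geodesically closed. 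In Case~3, where no such third path exists, only one of $P_1, P_2$ is adjacent to $R_i$ and only at a single cut vertex $x$; I free the cop previously assigned to the non-adjacent path and use it together with one other cop to sit at $x$, then slide along the chain of cut vertices (just as in Theorem \ref{thmPlanar}) until reaching a vertex with at least two neighbors in $R_i$, at which point I install two fresh bounding paths and re-enter Case~1, Case~2, or Case~3 on a strictly smaller region.

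Since $R_i$ strictly shrinks at each stage, eventually $R_i$ consists of a single vertex $r$; then every neighbor of $r$ lies on a guarded path and is therefore occupied by a cop, so $r$ is surrounded. The step I expect to be most delicate is Case~3, because it is the only place where the cop counts do not fall out of a mechanical translation of the planar proof: I need to verify that while sliding cops along the chain of cut vertices, the temporary reassignments never leave $P_1$ or the blocking position at $x$ unattended, so that the running total stays within our budget of $4$.
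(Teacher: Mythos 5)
Your proposal follows essentially the same route as the paper's own proof: the same initial setup with a length-one path and a geodesic, the same three-case stage analysis, and the same substitution of Lemmas \ref{lemmaBipPath}, \ref{lemmaOneCop}, and \ref{lemmaSwitch} for their planar counterparts to bring the budget down to $4$. Your flagged concern about Case~3 is reasonable but resolves exactly as in the planar argument, since the guarded paths let the cops reach the cut vertex before the robber can.
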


\begin{proof}
We will play the game in stages. In each Stage $i$ of the game, we will have two paths $P_1$ and $P_2$ with common endpoints that are guarded by cops in such a way that the robber is unable to access the vertices of $P_1$ and $P_2$. We will define the robber's region $R_i$ at Stage $i$ as the component of $G \setminus (P_1 \cup P_2)$ that the robber occupies. We will show that at each stage, we can reduce the robber's region to a new region $R_{i+1} \subsetneq R_i$.

If $G$ is a tree, then $s(G) = 2$, and we are done. Otherwise, we begin the game at Stage $1$. As $G$ is not a tree, there exists an edge $(uv) \in E(G)$ such that a geodesic path exists from $u$ to $v$ in the graph $G \setminus (uv)$.  We let $P_2$ be the path $(u,v)$, and we let $P_1$ be any $(u,v)$-path geodesic with respect to $G \setminus (uv)$. $P_2$ is geodesically closed with respect to $G$ and thus can be guarded by one cop. $P_1$ is a geodesic path with respect to $G \setminus P_2$, and so $P_1$ can be guarded with two cops by Lemma \ref{lemmaBipPath}. At this point, the component of the graph $G \setminus (P_1 \cup P_2)$ that the robber occupies is called $R_1$. We may redraw $G$ so that $R_1$ is enclosed by $P_1, P_2$.

We describe the strategy that we follow at each Stage $i$ of the game.  \\ \\
Case 1: The robber's region $R_i$ is enclosed by two guarded $(\alpha,\beta)$-paths $P_1,P_2$. $P_1$ is not geodesically closed with respect to $P_1 \cup R_i$, and $P_2$ is geodesically closed with respect to $P_2 \cup R_i$. 

Let $P_1$ be guarded by two cops $C_1,C_2$. Let $P_2$ be guarded by a cop $C_3$. Then by Lemma \ref{lemmaSwitch}, we can use one extra cop $C_4$ and replace $P_1$ with an $(\alpha,\beta)$-path $P'_1$ so that at most two cops guard $P'_1$, and such that there exists an $(\alpha,\beta)$-path $P'_2$ guarded by at most one cop such that the robber is restricted to a component $R_{i+1}$ of $G \setminus (P'_1 \cup P'_2)$ with $R_{i+1} \subsetneq R_i$. Depending on whether or not $P_1'$ is geodesically closed with respect to $P_1' \cup R_{i+1}$, this brings us to Case 1, 2, or 3. \\ \\
Case 2: The robber's region $R_i$ is enclosed by two guarded $(\alpha,\beta)$-paths $P_1,P_2$. $P_1$ is geodesically closed with respect to $P_1 \cup R_i$, and $P_2$ is geodesically closed with respect to $P_2 \cup R_i$. There exists an $(\alpha,\beta)$-path in $P_1 \cup P_2 \cup R_i$ distinct from $P_1$ and $P_2$. 

We choose a shortest $(\alpha,\beta)$-path in $P_1 \cup P_2 \cup R_i$, and we call this path $P_3$. We guard $P_3$ with at most two cops. $P_3$ divides the region $R_i$ into two parts, and hence the robber's region is either enclosed by $P_1$ and $P_3$, or the robber's region is enclosed by $P_2$ and $P_3$. In both cases, the robber's region is restricted to a region $R_{i+1} \subsetneq R_i$, and one of the paths enclosing $R_{i+1}$ is guarded by at most one cop. Depending whether or not $P_3$ is geodesically closed with respect to $P_3 \cup R_{i+1}$, this brings us to Case 1, 2, or 3. \\ \\
Case 3: The robber's region $R_i$ is enclosed by two guarded $(\alpha,\beta)$-paths $P_1,P_2$. $P_1$ is geodesically closed with respect to $P_1 \cup R_i$, and $P_2$ is geodesically closed with respect to $P_2 \cup R_i$. There exists no $(\alpha,\beta)$-path in $P_1 \cup P_2 \cup R_i$ distinct from $P_1$ and $P_2$. 

In this case, without loss of generality, $P_1$ has only one vertex $x  \in P_1$ adjacent to the robber's region, and $P_2$ is not adjacent to the robber's region. Then the robber can be restricted to his region simply by placing a cop at $x$. We thus place two cops $C_1, C_2$ at $x$ and confine the robber to his region. If $x$ has only one neighbor $y$ in the robber's region, then we move $C_1$ to $y$ and move $C_2$ to $y$ on the next turn. We may continue this process until $C_1, C_2$ occupy a vertex $x$ that has at least two neighbors $y,z$ in the robber's region $R_i$. Then we choose $P'_2 = (x,y)$, and we let $P'_1$ be an $(x,y)$-geodesic in $R_i \setminus (xy)$. We guard $P_1'$ with at most three cops and we may redraw $R_i$ so that the robber is enclosed by $P'_1,P'_2$. Then the robber is confined to a region $R_{i+1} \subsetneq R_i$, and depending on whether or not $P_1'$ is geodesically closed with respect to $P_1' \cup R_{i+1}$, this brings us to Case 1, 2, or 3.

If we continue this process, we will eventually reach a point in which the robber's region consists of a single vertex, at which point the robber is surrounded.
\end{proof}

Unlike Theorem \ref{thmPlanar}, we can show that the bound in Theorem \ref{thmBip} is tight.

\begin{theorem}
There exists a planar bipartite graph $G$ with $s(G) = 4$. 
\end{theorem}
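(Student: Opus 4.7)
The plan is to exhibit a specific planar bipartite graph $G$ for which three cops have no winning strategy; combined with Theorem~\ref{thmBip}, this would yield $s(G) = 4$. I would take $G$ to be the graph obtained from the icosahedron $I$ by subdividing each of its $30$ edges exactly once. Then $G$ is planar (since subdivision preserves planarity) and bipartite, with the $12$ original vertices of $I$ on one side and the $30$ subdivision vertices on the other; original vertices have degree $5$, and subdivision vertices have degree $2$.

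The robber's strategy would be the following. He begins at an original vertex and stays put until a cop arrives at his current original vertex $v$, at which point he is forced to move; he then relocates to a subdivision vertex $n_{vu}$ (where $u \in N_I(v)$) chosen so that (i) $n_{vu}$ is cop-free, and (ii) no cop other than the chaser at $v$ can reach the original vertex $u$ in a single move. On the next turn, he moves from $n_{vu}$ to $u$, restoring the invariant of being on an original vertex. To verify that such a $u$ always exists, note that a cop other than the chaser can reach $u$ in one move only if it occupies $u$ itself or a subdivision vertex adjacent to $u$; each such cop therefore blocks at most two candidate choices of $u$, with the maximum of two attained only when the cop sits at $n_{lk}$ with both $l, k \in N_I(v)$ (that is, on one of the five pentagon edges of $N_I(v)$). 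With only two cops besides the chaser, at most four of the five candidate values of $u$ are blocked, leaving at least one safe choice; moreover, a cop at $n_{vu}$ would itself block $u$, so condition (i) follows automatically from (ii).

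The hard part will be to verify that this invariant persists over many rounds, including when the cops reposition aggressively between successive forced escapes. The crucial point is that condition (ii) prevents the cops from ever surrounding the transitional subdivision vertex $n_{vu}$: with the chaser at $v$ and no second cop able to occupy $u$ in time, the two neighbors of $n_{vu}$ cannot both be cop-occupied after the cops' following move. An inductive argument, invoking the same local counting at $u$ when the robber is next pressured, then shows that the robber survives indefinitely, establishing $s(G) \geq 4$.
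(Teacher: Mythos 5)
Your proposal is correct and follows essentially the same route as the paper: the paper subdivides every edge of an arbitrary planar graph of minimum degree $5$ (of which the icosahedron is the canonical instance) and has the robber wait on a degree-$5$ original vertex until a cop lands on it, then escape to one of the five original vertices at distance two, using exactly your counting that the chaser blocks none of them and each of the other two cops blocks at most two. The inductive persistence you flag as ``the hard part'' is immediate, since the invariant and the counting are local and position-independent, just as in the paper's one-line ``repeat indefinitely.''
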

\begin{proof}
Let $\ensuremath\mathcal{P}_5$ be the family of planar graphs with minimum degree $5$. Let $H \in \ensuremath\mathcal{P}_5$, and let $G$ be the graph obtained from $H$ by subdividing each edge exactly once. We color vertices originally from $H$ red, and we color vertices added as subdivisions blue. This is a proper coloring, and thus we see that $G$ is bipartite. 

We show that $3$ cops are not sufficient to surround the robber on $G$. The robber uses the following strategy. The robber begins at a red vertex $r \in G$ and does not move until a cop occupies $r$. As red vertices have degree at least $5$, the robber will not be surrounded before being captured. Suppose that a cop occupies $r$. As $H$ has minimum degree $5$, there are at least $5$ red vertices within distance $2$ of the robber. Let $v_1, \dots, v_5$ be $5$ such red vertices. The cop occupying $r$ does not have any of $v_1, \dots, v_5$ in its closed neighborhood. Furthermore, each of the other two cops does not have more than two of $v_1, \dots v_5$ in its closed neighborhood. Therefore, there exists a red vertex $v_i$ within distance $2$ of $r$ that is not in the closed neighborhood of any cop. At this point, the robber uses the next two moves to move on the shortest path toward $v_i$. As $v_i$ is not in the closed neighborhood of any cop, the robber will reach $v_i$ before any cop reaches $v_i$. The robber then can repeat this strategy indefinitely and avoid being surrounded forever. Hence $s(G) \geq 4$, and by Theorem \ref{thmBip}, $s(G) = 4$.
\end{proof}

\section{Toroidal graphs}
The aim of this section is to bound $s(G)$ for toroidal graphs $G$. We will mimic the strategy used by F. Lehner to show that every toroidal graph has cop number at most $3$ \cite{LehnerTorus}. Rather than considering the graph $G$, we will consider an infinite planar tiling $G^T$ of $G$, and we will attempt to capture the robber on $G^T$.  When the robber chooses a vertex $r$ of $G$, we will choose a corresponding vertex $r_0$ of $G^T$, and we will consider a large ball $B$ around $r_0$. We will then guard geodesic paths from $r_0$ to the boundary of $B$, and we will divide the robber's region in $B$ until the robber is restricted to a planar region of $G^T$. Then we will surround the robber by the planar strategy.

The following observation will be useful. 

\begin{observation}
In the proof of Theorem \ref{thmPlanar}, if we allow $P_2$ to be guarded by $3$ cops, then we have a strategy to capture the robber on a planar graph using $8$ cops.
\label{obs8}
\end{observation}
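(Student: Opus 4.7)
I would prove Observation \ref{obs8} by rerunning the proof of Theorem \ref{thmPlanar} with one weakened invariant: I no longer insist that at every stage one of the two bounding paths of the robber's region be geodesically closed. In Theorem \ref{thmPlanar} that condition was used in order to guard $P_2$ with only two cops via Lemma \ref{lemmaTwoCops}, keeping the total at $3+2+2=7$. If instead we require only that $P_2$ be geodesic with respect to $P_2\cup R_i$, then $P_2$ needs three cops by Lemma \ref{lemmaThreeCops}, and the total becomes $3+3+2=8$.

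Concretely, the plan is to maintain the following invariant at each Stage $i$: the robber's region $R_i$ is enclosed by two geodesic $(\alpha,\beta)$-paths $P_1,P_2$, each guarded by three cops, with two additional cops held in reserve. Stage $1$ is set up as in Theorem \ref{thmPlanar}: if $G$ is a tree, two cops suffice; otherwise take $P_2=(u,v)$ for some edge lying on a cycle of $G$ and $P_1$ a $(u,v)$-geodesic in $G\setminus(uv)$, each guardable by three cops via Lemma \ref{lemmaThreeCops}. For the switching step I would prove a mild variant of Lemma \ref{lemmaGenSwitch} in which the hypothesis ``$P_2$ is guarded by two cops'' is replaced by ``$P_2$ is guarded by three cops,'' and the conclusion is correspondingly weakened so that the new pair of bounding paths is only guaranteed to be guarded by three cops each. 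Because $P_2$ itself is never altered during the switch and its guards move only along $P_2$ (which is disjoint from $P_1$, $P_3$, and the working region $A$), the cops on $P_2$ do not interfere with any stalking arguments, and the proof of the variant copies the proof of Lemma \ref{lemmaGenSwitch} verbatim. The three stage-cases in Theorem \ref{thmPlanar} adapt similarly: in each, whichever path was being taken geodesically closed is now merely assumed geodesic and carries one extra cop.

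The main thing to verify is that dropping the geodesically closed hypothesis on $P_2$ does not break any game-theoretic step. The hypothesis is used only in the bookkeeping, namely to conclude that $P_2$ is still guardable by two cops at the end of each stage; replacing $2$ by $3$ in both Lemma \ref{lemmaGenSwitch} and Theorem \ref{thmPlanar} is a change in bookkeeping only, and every move-by-move argument (shadow stalking, bypath minimality, the distance claim forcing $\mathrm{dist}(v_0,x)\in\{l,l+1\}$) is unchanged. Hence the iteration terminates as before, with the robber's region reduced to a single vertex $r$ all of whose neighbors lie on the two guarded paths, so the robber is surrounded. The peak cop count at any moment is $3+3+2=8$, proving the observation.
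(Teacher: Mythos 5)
Your proposal is correct and is exactly the argument the paper intends: the paper states Observation \ref{obs8} without proof, treating it as immediate from the fact that the hypothesis ``$P_2$ is guarded by two cops'' in Lemma \ref{lemmaGenSwitch} and in the stages of Theorem \ref{thmPlanar} is pure bookkeeping, so replacing it by three cops raises the peak count from $3+2+2=7$ to $3+3+2=8$. Your additional checks (that the guards on $P_2$ never participate in the switching maneuver, and that the weakened invariant ``both bounding paths geodesic and guarded by at most three cops'' is preserved through each stage) are the right things to verify and they go through.
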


We will establish some preliminaries.

\begin{definition}
We say that a cyclic order of integers in the form $(a, a+1, \dots, a+m-1, a+m, a+m-1, \dots, a+1)$ is called a \textit{sawtooth order}.
\end{definition}

\begin{observation}
Let $A = (a, a+1, \dots, a+m-1, a+m, a+m-1, \dots, a+1)$ be a sawtooth order. Let $i, j \in A$, $i < j$. Then for any subsequence $B = (i, \dots, j) \subseteq A$, replacing $B$ with $(i, i+1, \dots, j-1, j)$ gives a sawtooth order.
\label{obsSaw}
\end{observation}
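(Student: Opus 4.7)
My plan is to characterize sawtooth orders by a simple structural property and then do a short case analysis on the position of $B$ inside $A$. The key reformulation is that a cyclic sequence of integers is a sawtooth order with minimum $a$ and maximum $a+m$ if and only if consecutive elements differ by $\pm 1$ and there is a unique element attaining the maximum (the \emph{peak}) and a unique element attaining the minimum (the \emph{valley}), with strict monotonicity in between. The replacement subword $(i, i+1, \ldots, j)$ is strictly increasing, so it contributes no interior peaks or valleys, and its endpoints meet neighbors in $A$ that differ from $i$ or $j$ by exactly $\pm 1$. All the proof then has to check is that the new cyclic order inherits the ``single peak, single valley'' property.

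I would split into four cases depending on whether the contiguous subword $B$ of $A$ contains the peak, the valley, both, or neither. When $B$ contains neither, the hypothesis $i<j$ forces $B$ to be already strictly increasing, so $B=(i,i+1,\ldots,j)$ and the replacement changes nothing. When $B$ contains only the peak, $B=(i,i+1,\ldots,a+m,a+m-1,\ldots,j)$; the complement of $B$ in $A$ reads $(j-1,j-2,\ldots,a+1,a,a+1,\ldots,i-1)$, so after the replacement the cyclic order, rotated to start at $a$, becomes $(a,a+1,\ldots,j,j-1,\ldots,a+1)$, which is the sawtooth from $a$ to $j$. When $B$ contains only the valley, the mirror computation produces $(i,i+1,\ldots,a+m,a+m-1,\ldots,i+1)$, a sawtooth from $i$ to $a+m$. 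When $B$ contains both, $B$ wraps past both extrema and its complement is just $(j-1,j-2,\ldots,i+1)$, so the new cyclic order is $(i,i+1,\ldots,j,j-1,\ldots,i+1)$, a sawtooth from $i$ to $j$.

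The main obstacle is the cyclic bookkeeping: for each case one must verify that the predecessor of $i$ and the successor of $j$ in $A$ are precisely the values needed so that splicing the replacement back into $A$ does not introduce a second peak or valley. This is immediate from the $\pm 1$-difference property of $A$ together with the case hypotheses, but a few borderline situations — where $i$ or $j$ itself coincides with the peak or the valley of $A$ — require either being absorbed into the adjacent case or checked by direct inspection. These edge cases are where most of the care is needed, but none of them breaks the pattern, so in every case the result is again a sawtooth order in the sense of the definition.
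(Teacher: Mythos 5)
The paper states this observation without proof, so there is no argument of record to compare against; your write-up supplies the missing verification and it is correct. The four cases (peak only, valley only, both, neither) are exhaustive once you note, as you do, that a forward arc of $A$ lying entirely in the decreasing part would force $i>j$, and in each case your explicit description of the spliced cyclic order is right: the complement arcs are $(j-1,\dots,a+1,a,a+1,\dots,i-1)$, $(j+1,\dots,a+m,\dots,i+1)$, and $(j-1,\dots,i+1)$ respectively, yielding sawtooths with extrema $(a,j)$, $(i,a+m)$, and $(i,j)$. One small caution about your opening reformulation: for a $\pm 1$-step cyclic sequence, uniqueness of the \emph{global} maximum and minimum alone does not imply sawtooth (e.g.\ $(0,1,2,1,2,3,2,1)$ has a unique max and min but is not sawtooth), so "single peak, single valley" must be read as "single local maximum, single local minimum." This does not affect your proof, since the case analysis identifies each resulting cyclic order as a sawtooth by direct inspection rather than via that criterion; the borderline situations where $i$ or $j$ equals $a$ or $a+m$ are absorbed into the adjacent cases exactly as you indicate.
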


\begin{definition}
Let $H$ be a planar graph with an embedding in the plane, and let $v_0 \in V(H)$. Let $(h_1, h_2, h_3, \dots, h_m)$ be a clockwise walk around the boundary of $H$. Then we define the following cyclic order:
$$(H,v_0)_{bd}:=(dist(v_0,h_1), dist(v_0,h_2), dist(v_0,h_3), \dots, dist(v_0,h_m)).$$
\end{definition}

\begin{lemma}
Let $G$ be a planar graph with an embedding in the plane. Let $v_0,v_k \in V(G)$, and let $\Pi$ be the set of all geodesic paths from $v_0$ to $v_k$. There exist two geodesic paths $P_1, P_2$ from $v_0$ to $v_k$ enclosing an interior $\aleph$ such that $\bigcup \Pi \subseteq \aleph \cup P_1 \cup P_2$. 
\label{lemmaExt}
\end{lemma}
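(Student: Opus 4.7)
The plan is to find ``extremal'' left and right geodesics from $v_0$ to $v_k$ by iteratively combining members of $\Pi$. The engine will be a local swap operation: given two $(v_0,v_k)$-geodesics, one can replace the bigon segments between their common vertices to obtain new geodesics whose enclosed region is at least as large as before.

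First I would establish the following combinatorial fact. Given two $(v_0,v_k)$-geodesics $P$ and $P'$, list their common vertices in the order they appear along either path as $v_0 = u_0, u_1, \dots, u_t = v_k$. Between consecutive common vertices $u_i$ and $u_{i+1}$, the paths $P$ and $P'$ follow internally disjoint sub-paths, each of length exactly $dist(u_i,u_{i+1})$ (since subpaths of geodesics are geodesics). When these sub-paths differ, they form a simple cycle bounding a region $R_i$ of the plane. Moreover, any choice of sub-path (taken from $P$ or from $P'$) at each index $i$ yields another $(v_0,v_k)$-geodesic of the same total length. This already shows how to combine two geodesics into an ``outer envelope'' containing both.

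The main construction proceeds iteratively. Enumerate $\Pi = \{Q_1,\dots,Q_N\}$ (finite since $G$ is finite), initialize $P_1 = P_2 = Q_1$ with degenerate enclosed region, and at step $j$ assume that $P_1,P_2$ enclose a region $\aleph_{j-1}$ containing $Q_1,\dots,Q_{j-1}$. I would then update $(P_1,P_2)$ to also enclose $Q_j$. The update identifies the \emph{excursions} of $Q_j$: maximal sub-paths of $Q_j$ whose interiors lie outside $\aleph_{j-1} \cup P_1 \cup P_2$. By planarity each excursion begins and ends on $P_1 \cup P_2$, and it is itself a geodesic between its endpoints. An excursion whose endpoints both lie on $P_1$ (resp.\ $P_2$) can directly replace the corresponding sub-path of $P_1$ (resp.\ $P_2$), since the two sub-paths have the same length; the result is a new $(v_0,v_k)$-geodesic that, together with $P_2$ (resp.\ $P_1$), encloses a strictly larger region. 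After processing all excursions of $Q_j$, the updated pair encloses $\aleph_{j-1} \cup Q_j$; after $N$ steps, the pair $P_1,P_2$ encloses $\bigcup \Pi$.

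The main obstacle I anticipate is the case in which an excursion of $Q_j$ leaves $\aleph_{j-1}$ from a vertex on $P_1$ and returns to a vertex on $P_2$ (or vice versa). Such a ``cross-excursion'' does not substitute cleanly into either boundary path; instead the pair $(P_1,P_2)$ together with the excursion must be re-concatenated into a new pair of simple $(v_0,v_k)$-geodesics. Making this rigorous amounts to applying the combinatorial fact of the first step globally: treating $P_1 \cup P_2 \cup Q_j$ as a family of bigons meeting at common vertices and selecting the ``outermost'' sub-path at each bigon. Verifying that the resulting two paths are simple, have the correct geodesic length, and still enclose all previously captured geodesics is where the bulk of the geometric bookkeeping lives.
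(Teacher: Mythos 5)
Your overall plan is the same one the paper uses: absorb the geodesics of $\Pi$ one at a time, using the fact that an excursion of a new geodesic between two boundary vertices $x,y$ is itself a geodesic of length $dist(v_0,y)-dist(v_0,x)$, hence interchangeable with the boundary segment it bypasses. The bigon/mix-and-match observation and the same-side substitution step are correct. The genuine gap is exactly the step you flag and postpone: the cross-excursion, where $Q_j$ leaves the current region from a vertex of $P_1$ and returns to a vertex of $P_2$. This case really occurs (for instance when $P_1$ and $P_2$ cross at an interior common vertex, so that $P_1$ forms the ``upper'' side of the outer boundary near $v_0$ while $P_2$ forms it near $v_k$; a geodesic running outside that upper side then attaches to both paths), and ``re-concatenating into a new pair'' is not a routine substitution. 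You must show that the two resulting walks are simple, that the enlarged region still contains $\aleph_{j-1}$, and --- the real danger --- that the boundary of the enlarged region is still a union of two $(v_0,v_k)$-arcs rather than some closed curve on which the roles of $P_1$ and $P_2$ can no longer be disentangled. None of this is established, and it is precisely where an induction that carries two explicit paths as its invariant becomes painful.

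The paper avoids the problem by choosing a different invariant: during the induction it never maintains two paths at all. It keeps the union $H$ of all absorbed geodesics and tracks the cyclic sequence of distances $dist(v_0,\cdot)$ read along the boundary walk of $H$, maintaining that this sequence is ``sawtooth'' (increases to a unique maximum, then decreases). Splicing in one excursion replaces a subsequence $(i,\dots,j)$ of that boundary sequence by $(i,i+1,\dots,j)$, which preserves the sawtooth property by a one-line observation; the two bounding geodesics $P_1,P_2$ are extracted only once, at the very end, by cutting the sawtooth boundary at its minimum and maximum. Since there is only one evolving object, the question of ``which path does this excursion attach to'' never arises. To repair your argument you would either need to adopt such a one-object invariant, or prove separately that the outer boundary of the union of any collection of $(v_0,v_k)$-geodesics decomposes into exactly two $(v_0,v_k)$-geodesic arcs --- which is essentially the paper's sawtooth lemma in disguise.
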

\begin{proof}
We will prove the following statement ($*$): \\ \\
Let $\Pi'$ be a proper subset of all geodesic paths from $v_0$ to $v_k$. Let $H \subseteq G$ be the union of all paths in $\Pi'$, and let $H$ inherit a planar embedding from $G$. Suppose that $(H,v_0)_{bd}$ is sawtooth. Then there exists a geodesic $(v_0,v_k)$-path $P' \not \in \Pi'$ such that $(H \cup P',v_0)_{bd}$ is sawtooth. ($*$) \\ \\
This statement ($*$) implies that we can let $\Pi'$ begin with a single geodesic path $P_0$, for which $(P_0,v_0)_{bd}$ is clearly sawtooth, and we can add new $(v_0,v_k)$-geodesics to $\Pi'$ one at a time until $\Pi'$ contains all $(v_0,v_k)$-geodesics---that is, until $\Pi' = \Pi$. Furthermore, in this way we can ensure that $(\bigcup \Pi, v_0)_{bd}$ is sawtooth. Then we let $(h_1, \dots, h_l, \dots, h_m)$ be a clockwise ordering of the boundary of $\bigcup \Pi$ such that $(dist(v_0, h_1), \dots, dist(v_0, h_l))$ is increasing and such that $(dist(v_0, h_l), \dots, dist(v_0,h_m))$ is decreasing. We let $q_1$ be a $(v_0, h_1)$-geodesic of $\bigcup \Pi$, and we let $q_1'$ be an $(h_m,v_k)$-geodesic of $\bigcup \Pi$. We let $q_2$ be a $(v_0, h_m)$-geodesic of $\bigcup \Pi$, and we let $q_2'$ be an $(h_l, v_k)$-geodesic of $\bigcup \Pi$. We see that $P_1: = q_1 \cup (h_1, \dots, h_l) \cup q_1'$ and $P_2:= q_2 \cup (h_m, \dots, h_l) \cup q_2'$ are $(v_0,v_k)$-geodesics of $G$ that enclose a region $\aleph$ such that $\bigcup \Pi \subseteq \aleph \cup P_1 \cup P_2$. Then the lemma is proven. Thus we aim to prove the statement ($*$).

 Let $\Pi'$ be a proper subset of all geodesic paths from $v_0$ to $v_k$. Let $H \subseteq G$ be the union of all paths in $\Pi'$, and let $H$ inherit a planar embedding from $G$. Suppose that $(H,v_0)_{bd}$ is sawtooth. Let $\aleph$ be the region including and enclosed by the boundary of $H$. Let $P^* \not \in \Pi'$ be a geodesic path from $v_0$ to $v_k$. We write $P^* = (w_0, \dots, w_k)$.

We make the following claim. Suppose that $w_i \in V(P^*)$ belongs to the boundary of $H$. Then $dist_H(v_0,w_i) = i$. To prove the lemma, suppose that $dist_H(v_0,w_i) < i$. Then there exists a path $P \subseteq H$ from $v_0$ to $w_i$ of length $i' < i$, implying that $P \cup P^*(w_i, v_k)$ is a walk from $v_0$ to $v_k$ of length less than $k$, a contradiction. Suppose, on the other hand, that $dist_H(v_0,w_i) > i$. Then there exists a path $P \subseteq H$ from $w_i$ to $v_k$ of length less than $k-i$, implying that $P^*(v_0, w_i) \cup P$ is a walk from $v_0$ to $v_k$ of length less than $k$, a contradiction. Thus we see that $dist_H(v_0,w_i) = i$.

If $P^* \subseteq \aleph$, then clearly $(H,v_0)_{bd}$ is sawtooth. Otherwise, $P^*$ is not contained in $\aleph$. Let $w_{i+1} \in P^*$ be the first vertex of $P^*$ that does not belong to $\aleph$, and let $w_j$ be the first vertex in $P^*$ after $w_{i+1}$ that belongs to $\aleph$. By the previous discussion, $dist_H(v_0,w_i) = i, dist_H(v_0,w_j) = j$. As $P^*$ is a geodesic, this implies that the subpath $q = (w_i, \dots, w_j)$ is of length $j-i$ and that there exist paths $p = (v_0, \dots, w_i) \subseteq H, p' = (w_j, \dots, v_k)$ respectively of lengths $i$ and $k-j$. Therefore, $P':= p \cup q \cup p'$ is a geodesic path in $G$ that is not included in $\aleph$. We show that $(H \cup P', v_0)_{bd}$ is sawtooth.

As $(dist(v_0,w_i), dist(v_0,w_{i+1}), \dots, dist(v_0,w_{j-1}), dist(v_0,w_j))$ = $(i,i+1, \dots, j-1, j)$, the cyclic order $(H \cup P', v_0)_{bd}$ is obtained from $(H,v_0)_{bd}$ by replacing a subsequence $(i, \dots, j)$ with $(i, i+1, \dots, j-1,j)$. By Observation \ref{obsSaw}, this leaves us with another sawtooth order. Hence $(H \cup P', v_0)_{bd}$ is also sawtooth.

Thus the statement ($*$) holds, and we see that there exist $(v_0,v_k)$-geodesic paths $P_1, P_2$ that enclose a region $\aleph$ such that $\bigcup \Pi \subseteq \aleph \cup P_1 \cup P_2$. Thus the lemma is proven.
\end{proof}

\begin{definition}
Let $G$ be an infinite graph. We say that $G$ has \textit{polynomial growth} if there exists a polynomial $f$ such that for any vertex $v \in G$, the number of vertices at distance exactly $d$ from $v$ is at most $f(d)$.
\end{definition}

The following lemma is proven in \cite{LehnerTorus}.

\begin{lemma}
Let $G$ be a finite toroidal graph. Then there is an infinite planar cover of $G$ with polynomial growth.
\label{lemmaPoly}
\end{lemma}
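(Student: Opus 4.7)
The plan is to build $G^T$ explicitly as the pullback of a drawing of $G$ along the universal covering map of the torus, and then obtain the growth bound by comparing graph distance in $G^T$ with Euclidean distance in the plane.

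First, I would fix a $2$-cell embedding of $G$ on the torus $\mathbb{T}=\mathbb{R}^2/\mathbb{Z}^2$ and let $\pi\colon\mathbb{R}^2\to\mathbb{T}$ be the standard covering map $\pi(x,y)=(x \bmod 1,\,y \bmod 1)$. Since $\pi$ is a local homeomorphism, the preimage of the drawing of $G$ inherits a drawing in $\mathbb{R}^2$; take the graph $G^T$ whose vertices are $\pi^{-1}(V(G))$ and whose edges are the arc-lifts of the edges of $G$. By construction $\pi$ restricts to a graph covering $G^T\to G$, so $G^T$ is an infinite planar cover of $G$, as required.

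Next I would verify polynomial growth. The deck transformation group of $\pi$ is $\mathbb{Z}^2$, acting on $\mathbb{R}^2$ by integer translation and on $V(G^T)$ freely with $|V(G)|$ orbits; in particular each unit translate of the fundamental square $[0,1)^2$ contains exactly $|V(G)|$ vertices of $G^T$. Since the embedding uses finitely many edges on the compact torus, we may choose the drawing so that there is a constant $L$ bounding the Euclidean length of every lifted edge (e.g.\ by drawing each edge as a geodesic representative of its homotopy class in its incident face). Then traversing any single edge of $G^T$ changes Euclidean position by at most $L$, so by induction every pair $u,v\in V(G^T)$ satisfies $\|u-v\|_2\leq L\cdot d_{G^T}(u,v)$.

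Fixing a vertex $v_0\in V(G^T)$, the graph-metric ball of radius $d$ around $v_0$ is therefore contained in the Euclidean ball $B_E(v_0,Ld)$, which meets only $O(d^2)$ translates of $[0,1)^2$. Each translate contributes $|V(G)|$ vertices of $G^T$, so the number of vertices at graph distance exactly $d$ from $v_0$ is at most $c\,|V(G)|\,d^2$ for a constant $c$ depending only on $L$. This is a polynomial bound uniform in $v_0$, so $G^T$ has polynomial growth.

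The main obstacle is the quiet step hidden in the second paragraph: choosing the drawing of $G$ on $\mathbb{T}$ so that the Euclidean lengths of lifted edges are uniformly bounded. Once the drawing is fixed with this property, the rest of the argument is routine, as the linear comparison between graph distance and Euclidean distance, together with the compactness of $\mathbb{T}$, immediately yields the $O(d^2)$ growth bound.
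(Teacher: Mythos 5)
Your argument is correct and is essentially the standard one: the paper does not prove Lemma \ref{lemmaPoly} itself but cites \cite{LehnerTorus}, where the cover is obtained in the same way, by lifting an embedding of $G$ through the universal covering $\mathbb{R}^2 \to \mathbb{R}^2/\mathbb{Z}^2$ and bounding growth by comparing graph distance to Euclidean distance. The one point you leave implicit is that the lift is connected, which follows from taking the embedding to be $2$-cell (so that the inclusion of $G$ into the torus is surjective on fundamental groups); for connected graphs admitting no $2$-cell torus embedding (e.g.\ trees) the lemma is only needed in the planar case anyway.
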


We are now ready to prove an upper bound for the surrounding cop number of toroidal graphs.

\begin{theorem}
Let $G$ be a toroidal graph. Then $s(G) \leq 8$.
\label{thmTorus}
\end{theorem}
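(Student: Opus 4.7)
The plan is to lift the game to the infinite planar cover $G^T$ of $G$ (Lemma~\ref{lemmaPoly}) and reduce to a bounded planar instance of the $8$-cop strategy from Observation~\ref{obs8}. A winning cops' strategy on $G^T$ against the lifted robber projects to a winning strategy on $G$, so after the robber places at $r \in G$, the cops fix a lift $r_0 \in V(G^T)$ and play the game in $G^T$; when the lifted robber is eventually surrounded, the projected cop positions surround $r$ in $G$.

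The key step is to confine the robber to a bounded planar subgraph of $G^T$ using $6$ cops, leaving $2$ cops for the path-switching moves of Lemma~\ref{lemmaGenSwitch}. Following the strategy sketched in the opening paragraph of this section, I would select $\alpha, \beta \in V(G^T)$ far from $r_0$ in suitable directions, then apply Lemma~\ref{lemmaExt} to obtain two extreme $(\alpha,\beta)$-geodesics $P_1, P_2$ enclosing a region $\aleph$ that contains every $(\alpha,\beta)$-geodesic of $G^T$. With $\alpha, \beta$ chosen so that $r_0$ lies strictly inside $\aleph$ and far from $P_1 \cup P_2$, six initial cops shadow the robber on $P_1$ and $P_2$ (three each, via Lemma~\ref{lemmaThreeCops}), trapping the robber in the finite planar subgraph $\aleph \cup P_1 \cup P_2$.

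Once the robber is trapped, Observation~\ref{obs8} applies to $\aleph \cup P_1 \cup P_2$ verbatim: the two remaining cops serve as the switching cops across successive invocations of Lemma~\ref{lemmaGenSwitch}. Because Lemma~\ref{lemmaExt} guarantees that every geodesic bypath encountered during switching lies within $\aleph$, each stage of the planar strategy remains well-defined despite $G^T$ being infinite, and the argument of Theorem~\ref{thmPlanar} goes through unchanged. Iterating shrinks the robber's region to a single vertex, at which point every neighbor of the robber is occupied by a cop and the robber is surrounded.

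The hard part will be the setup step: choosing $\alpha, \beta$ so that $r_0$ lies strictly inside $\aleph$ while leaving enough clearance between $r_0$ and $P_1 \cup P_2$ for the cops to reach their shadowing positions before the robber can escape to the boundary of $\aleph$. This requires combining the planarity of $G^T$ with its polynomial growth (Lemma~\ref{lemmaPoly}): the polynomial bound on the number of vertices at each distance from $r_0$ is precisely what allows $\alpha, \beta$ to be pushed far enough away that $\aleph$ remains bounded yet contains a ball around $r_0$ whose radius exceeds the number of turns the cops need to establish their shadowing pattern on $P_1$ and $P_2$.
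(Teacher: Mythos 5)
Your overall architecture (lift to the planar cover $G^T$, surround one fixed lift $r_0$, finish with the planar strategy of Observation~\ref{obs8}) matches the paper, and the projection argument and the final cop count are fine. But the confinement step has a genuine gap. You need $\alpha,\beta$ such that $r_0$ lies \emph{strictly inside} the region $\aleph$ enclosed by the two extremal $(\alpha,\beta)$-geodesics of Lemma~\ref{lemmaExt}, and far from $P_1\cup P_2$. Lemma~\ref{lemmaExt} gives no lower bound whatsoever on $\aleph$: it only says $\aleph$ contains every $(\alpha,\beta)$-geodesic. The union of all geodesics between two vertices of $G^T$ can be a single path (so $\aleph$ has empty interior), or a thin strip hugging a ``shortest direction'' that misses $r_0$ no matter how far away $\alpha$ and $\beta$ are pushed; e.g.\ in the grid cover of $C_n\square C_n$, all geodesics between two vertices on a common horizontal line form exactly that line. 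So the robber need not start inside $\aleph$, your strategy has no branch for the case that he is outside it, and no appeal to polynomial growth fixes this --- polynomial growth controls $|Q|$, not the width of a geodesic lens.

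The paper's proof is organized around precisely the case you cannot handle. All guarded geodesics are anchored at $r_0$ itself and run to vertices $q_1,\dots,q_m$ on the boundary $Q$ of a huge ball $B_{r_0}(D)$, and Lemma~\ref{lemmaExt} is used for the \emph{opposite} purpose from yours: when the robber is \emph{outside} the lens of all $(r_0,q_a)$-geodesics, the two extremal geodesics are geodesically closed with respect to his territory, so by Lemma~\ref{lemmaTwoCops} each needs only two guards instead of three. The freed cops then guard a geodesic to the ``midpoint'' $q_{[(a+b)/2]}$ of the boundary arc, and a binary search over the cyclic order of $Q$ confines the robber, after at most $\log m$ rounds, between geodesics to consecutive boundary vertices --- a finite planar region where Observation~\ref{obs8} finishes. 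Polynomial growth enters only to show that with $D=e^{kn}$ the robber, who drifts at most $2n$ further from $r_0$ each time a path is guarded (Lemma~\ref{lemma2n}), never reaches $Q$ during the $O(\log m)$ rounds. That bisection mechanism, not a single well-chosen lens, is the missing idea in your proposal.
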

\begin{proof}
Let $|G| = n$. We will essentially use the strategy of F. Lehner from \cite{LehnerTorus} with some slight modifications. Rather than considering $G$ directly, we will consider an infinite planar tiling $G^T$ of $G$ with polynomial growth (by Lemma \ref{lemmaPoly}). We note that there exists a natural projection function $\pi:V(G^T) \rightarrow V(G)$. If we play a game of cops and robbers on $G$, then at each point in the game, each cop $C$ has an infinite number of preimages in $G^T$ given by $\pi^{-1}(C)$. Furthermore, the robber $r$ has an infinite number of preimages $\pi^{-1}(r)$. The robber $r$ and a cop $C$ occupy the same vertex in $G$ if and only if some element of $\pi^{-1}(C)$ occupies the same vertex as some element of $\pi^{-1}(r)$ in $G^T$. Furthermore, the robber is surrounded on $G$ if and only if some element of $\pi^{-1}(r)$ occupies a vertex $x \in G^T$ such that all neighbors of $x$ are occupied by cop preimages. 

In our strategy, rather than aiming to let the cop preimages on $G^T$ surround any arbitrary robber preimage, we will focus on surrounding one predetermined robber preimage. This restriction can only make the game more difficult for the cops, and therefore any upper bound on the number of cops needed to surround a specific preimage of the robber on $G^T$ also gives an upper bound for the number of cops needed to surround the robber on $G$.

From this point onward, we will identify the robber with the $G^T$ preimage of the robber that we wish to surround. Let the robber begin the game at $r_0 \in G^T$. We let $D$ be a large value that is to be determined later. We define $B = B_{r_0}(D)$ as the ball of radius $D$ centered at $r_0$. Let $Q$ be the circumference of this ball; that is, let $Q$ be the set of vertices at distance exactly $D$ from $r_0$. We will need some lemmas.

\begin{lemma}
Let $v \in Q$. Let $P$ be a geodesic path in $B$ from $r_0$ to $v$. Suppose that the robber is at distance $d < D - 3n$ from $r_0$. Then $P$ can be guarded by three cop preimages as in Lemma \ref{lemmaThreeCops} before the robber reaches a distance of $d + 2n$ from $r_0$.
\label{lemma2n}
\end{lemma}
\begin{proof}
Let $P = (r_0, v_1, \dots,v_D = v )$. We define the shadow the robber on $P$ as in Lemma \ref{lemmaThreeCops}.

As the diameter of $G$ is at most $n$, three cops preimages $C_1, C_2, C_3$ can reach $v_{d+n+1},v_{d+n+2},v_{d+n+3}$ within $n$ moves. By assumption, when $C_1, C_2, C_3$ reach their positions, the shadow of the robber on $P$ is on the subpath $(r_0, v_1, \dots, v_{d+n})$. Then $C_1, C_2, C_3$ move along $P$ toward the robber's shadow until they reach a position to stalk the robber's shadow. $C_1,C_2,C_3$ reach such a position before the robber reaches $v_{d+2n}$.
\end{proof}

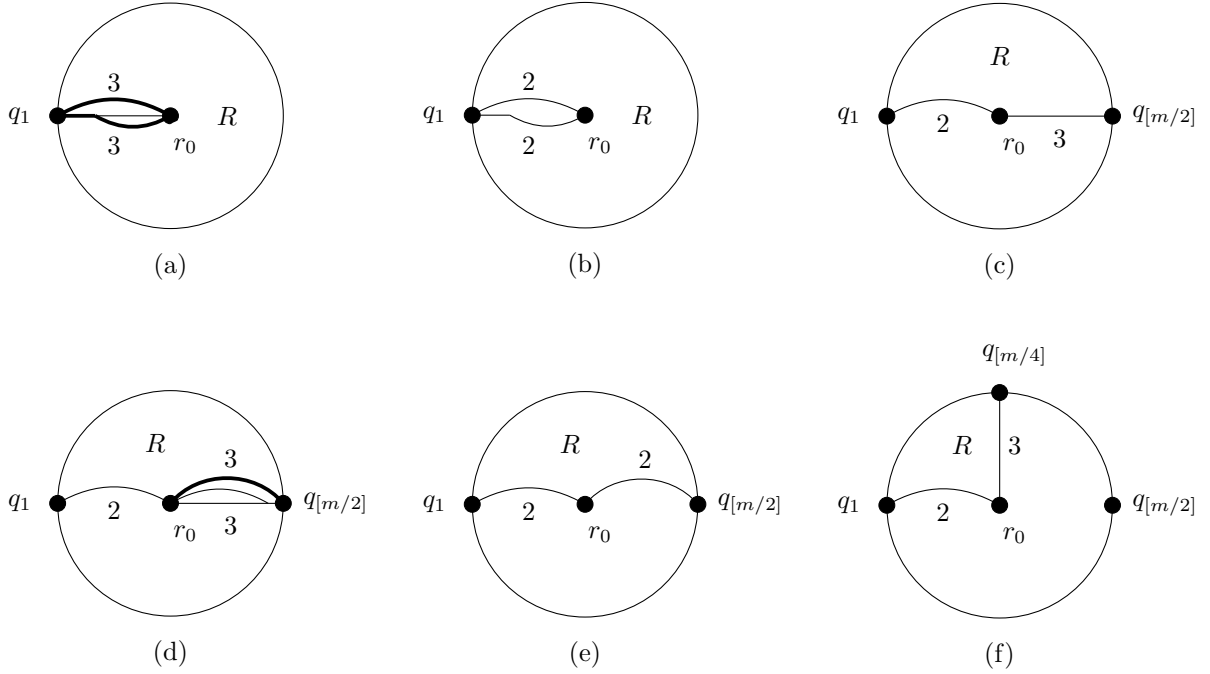
\begin{figure}
\begin{tabular} {l l l}
\begin{tikzpicture}
[scale=1,auto=left,every node/.style={circle,fill=gray!30}]
\draw (0,0) circle [radius=1.5];
\node (z) at (-0.75,0.45) [fill = white]  {$3$};
\node (z) at (-0.75,-0.4) [fill = white]  {$3$};
\node (z) at (0.75,0) [fill = white]  {$R$};
\node (z) at (0,-2) [fill = white]  {(a)};
\filldraw (-1.5,0) circle (3pt);
\filldraw (0,0) circle (3pt);
\draw (0,0)--(-1.5,0);
\draw [line width=0.5mm] (-1,0) -- (-1.5,0);
\node (z) at (-2,0) [fill = white]  {$q_1$};
\node (z) at (0.2,-0.4) [fill = white]  {$r_0$};
\draw [line width=0.5mm](0,0) to  [out=150,in=30] (-1.5,0) ;
\draw[line width=0.5mm] (0,0) to [out=-150,in=-30] (-1,0) ;
\end{tikzpicture} &
\begin{tikzpicture}
[scale=1,auto=left,every node/.style={circle,fill=gray!30}]
\draw (0,0) circle [radius=1.5];
\node (z) at (0.2,-0.4) [fill = white]  {$r_0$};
\node (z) at (0.75,0) [fill = white]  {$R$};
\node (z) at (-0.75,0.45) [fill = white]  {$2$};
\node (z) at (-0.75,-0.4) [fill = white]  {$2$};
\node (z) at (0,-2) [fill = white]  {(b)};
\filldraw (-1.5,0) circle (3pt);
\filldraw (0,0) circle (3pt);
\draw (-1,0)--(-1.5,0);
\node (z) at (-2,0) [fill = white]  {$q_1$};

\draw (0,0) to [out=150,in=30] (-1.5,0) ;
\draw (0,0) to [out=-150,in=-30] (-1,0) ;
\end{tikzpicture} &
\begin{tikzpicture}
[scale=1,auto=left,every node/.style={circle,fill=gray!30}]
\node (z) at (0.2,-0.4) [fill = white]  {$r_0$};
\node (z) at (2.2,0) [fill = white]  {$q_{[m/2]}$};
\node (z) at (0,-2) [fill = white]  {(c)};
\node (z) at (0,0.8) [fill = white]  {$R$};
\draw (0,0) circle [radius=1.5];
\node (z) at (-0.75,-0.1) [fill = white]  {$2$};
\node (z) at (0.8,-0.3) [fill = white]  {$3$};
\filldraw (-1.5,0) circle (3pt);
\filldraw (0,0) circle (3pt);
\filldraw (1.5,0) circle (3pt);

\draw (0,0)--(1.5,0);
\node (z) at (-2,0) [fill = white]  {$q_1$};
\draw (0,0) to [out=150,in=30] (-1.5,0) ;
\end{tikzpicture}
 \\
\begin{tikzpicture}
[scale=1,auto=left,every node/.style={circle,fill=gray!30}]
\node (z) at (0.2,-0.4) [fill = white]  {$r_0$};
\node (z) at (2.2,0) [fill = white]  {$q_{[m/2]}$};
\node (z) at (-0.2,0.8) [fill = white]  {$R$};
\node (z) at (0,-2) [fill = white]  {(d)};
\draw (0,0) circle [radius=1.5];
\node (z) at (0.8,-0.3) [fill = white]  {$3$};
\node (z) at (-0.75,-0.1) [fill = white]  {$2$};
\node (z) at (0.8,0.6) [fill = white]  {$3$};

\filldraw (-1.5,0) circle (3pt);
\filldraw (0,0) circle (3pt);
\filldraw (1.5,0) circle (3pt);

\draw  (0,0)--(1.5,0);
\node (z) at (-2,0) [fill = white]  {$q_1$};

\draw (0,0) to [out=150,in=30] (-1.5,0) ;

\draw (0,0) to [out=30,in=150] (1.3,0) ;
\draw [line width=0.5mm] (0,0) to [out=50,in=130] (1.5,0) ;

\end{tikzpicture} &
\begin{tikzpicture}
[scale=1,auto=left,every node/.style={circle,fill=gray!30}]
\node (z) at (0.2,-0.4) [fill = white]  {$r_0$};
\node (z) at (2.2,0) [fill = white]  {$q_{[m/2]}$};
\node (z) at (-0.2,0.8) [fill = white]  {$R$};
\draw (0,0) circle [radius=1.5];
\node (z) at (0,-2) [fill = white]  {(e)};
\node (z) at (0.8,0.6) [fill = white]  {$2$};
\node (z) at (-0.75,-0.1) [fill = white]  {$2$};

\filldraw (-1.5,0) circle (3pt);
\filldraw (0,0) circle (3pt);
\filldraw (1.5,0) circle (3pt);

\node (z) at (-2,0) [fill = white]  {$q_1$};

\draw (0,0) to [out=150,in=30] (-1.5,0) ;

\draw (0,0) to [out=50,in=130] (1.5,0) ;
\end{tikzpicture} &
\begin{tikzpicture}
[scale=1,auto=left,every node/.style={circle,fill=gray!30}]
\node (z) at (0.2,-0.4) [fill = white]  {$r_0$};
\node (z) at (-0.5,0.8) [fill = white]  {$R$};
\node (z) at (0.2,2) [fill = white]  {$q_{[m/4]}$};
\node (z) at (0,-2) [fill = white]  {(f)};
\node (z) at (0.2,0.8) [fill = white]  {$3$};
%\node (z) at (0.8,0.6) [fill = white]  {$2$};
\node (z) at (-0.75,-0.1) [fill = white]  {$2$};

\node (z) at (2.2,0) [fill = white]  {$q_{[m/2]}$};
\draw (0,0) circle [radius=1.5];
\filldraw (-1.5,0) circle (3pt);
\filldraw (0,0) circle (3pt);
\filldraw (0,1.5) circle (3pt);
\filldraw (1.5,0) circle (3pt);

\node (z) at (-2,0) [fill = white]  {$q_1$};

\draw (0,0) to [out=150,in=30] (-1.5,0) ;

%\draw (0,0) to [out=50,in=130] (1.5,0) ;

\draw (0,0)--(0,1.5);
\end{tikzpicture} \\
\end{tabular}
\caption{The figures show the initial maneuvers of the strategy of Theorem \ref{thmTorus}. In each figure, the circle represents the ball $B$ around $r_0$, and $R$ represents the robber's region in $B$. Figure (a) shows six cops guarding two $(r_0,q_1)$-geodesics that enclose all $(r_0,q_1)$-geodesics. Figure (b) shows that if R is not enclosed in the two geodesics guarded in (a), then these geodesics can be guarded with four cops.  Figure (c) shows $R$ being divided by an $(r_0,q_{[m/2]})$-geodesic. Figure (d) shows three additional cops guarding another $(r_0,q_{[m/2]})$-geodesic that separates $R$ from all $(r_0,q_{[m/2]})$-geodesics. Figure (e) shows that the paths adjacent to $R$ are geodesically closed with respect to $R$ and thus need only four cops to guard. Figure (f) shows $R$ being divided again by an $(r_0,q_{[m/4]})$-geodesic. The maneuvers shown in (d), (e), (f) can be repeated until the robber is contained in a planar region of $G^T$.}
\label{figTorus}
\end{figure}

Let $|Q| = m$. Let $(q_1, \dots, q_m)$ be the cyclic ordering of the elements of $Q$ according to the planar embedding of $G^T$. First, we use  Lemma \ref{lemmaExt} to compute two geodesics $P_1, P_2$ from $r_0$ to $q_1$ that enclose all ($r_0,q_1$)-geodesics. Then we use six cops to guard $P_1$ and $P_2$, as in Figure \ref{figTorus} (a). If the robber is enclosed by $P_1, P_2$, then the robber is restricted to a planar region by six cops, and then we can win the game with eight cops by Observation \ref{obs8}. Otherwise, $P_1$ and $P_2$ are geodesically closed with respect to the robber's territory in $B$, and we only need four of our six cops to continue to guard $P_1$ and $P_2$ (see Figure \ref{figTorus} (b)).

Next, we choose a geodesic $P_3$ from $r_0$ to $q_{[m/2]}$ and guard $P_3$ with $3$ cops (see Figure \ref{figTorus} (c)). Without loss of generality, the robber is restricted to a region of $B$ enclosed by $P_1$ and $P_3$, which are guarded by a total of $5$ cops.

From this point onward, we repeat the following procedure, which will recursively restrict the robber's territory until the robber is confined to a planar region of $G^T$.

Let the robber be confined to a region of $B$ bounded by geodesics $P_1 = (r_0, \dots, q_a)$ and $P_2 = (r_0, \dots, q_b)$. Suppose further that $P_2$ is guarded by two cops. Using Lemma \ref{lemmaExt}, we compute a geodesic $P_3$ from $r_0$ to $q_a$ such that $P_1$ and $P_3$ enclose all $(r_0, q_a)$-geodesics in the robber's territory (see Figure \ref{figTorus} (d)). We then use three cops to guard $P_3$. If the robber is in the interior of $P_1 \cup P_3$, then the robber is confined to a planar region of $B$ with six cops, and we win the game with eight cops by Observation \ref{obs8}. Otherwise, $P_3$ is geodesically closed with respect to the robber's territory and can be guarded by two cops. Now the robber is confined to a region of $B$ that is enclosed by $P_2$ and $P_3$, each of which is guarded by two cops (see Figure \ref{figTorus} (e)). Next, we use three cops to guard a geodesic $P_1'$ from $r_0$ to $q_{[(a+b)/2]}$ (see Figure \ref{figTorus} (f)). Now we see that, without loss of generality, the robber's territory in $B$ is enclosed by a geodesic $P_1'$ from $r_0$ to $q_{[(a+b)/2]}$ and a geodesic $P_2$ from $r_0$ to $q_b$. Furthermore, we see that $P_2$ is guarded by two cops. Thus the initial conditions of our procedure are satisfied again, and we can repeat our procedure.

We repeat this process until the robber's territory in $B$ is enclosed by a geodesic $P_1$ from $r_0$ to $q_a$ and a geodesic $P_2$ from $r_0$ to $q_{a+1}$. We see that reaching this point requires at most $\log m$ iterations of the procedure. As $(q_i)$ are in cyclic order with respect to the drawing of $G^T$, once $P_1$ and $P_2$ are guarded as such, the robber is confined to a planar region of $G^T$ and can be captured with eight cops by Observation \ref{obs8}.

As our procedure relies on Lemma \ref{lemma2n}, it remains only to show that the robber's distance from $r_0$ cannot reach $D - 3n$ before the procedure is complete. Any time a path is guarded, the robber is only able to move a distance of at most $2n$ away from $r_0$. To initialize our procedure, we require our cops to guard three paths. In each iteration of our procedure, we guard two paths. Furthermore, we execute at most $\log m = \log |Q|$ iterations of our procedure. Therefore, the total number of paths guarded in our strategy is at most $3 + 2 \log m$, and hence when our procedure is finished, the robber is at a distance of at most $6n + 4n \log m$ from $r_0$.

We now assign a value to $D$. We set $D = e^{kn}$, where $k$ is a sufficiently large constant. Recall that $m$ is bounded by the polynomial expression $f(m)$. We can bound the polynomial $f(m)$ by another polynomial $f^*(m)$ of the form $Am^{\alpha}$ such that $f(m) \leq f^*(m)$ for $m \geq 1$. Then the distance of the robber from $r_0$ is at most 
$$6n + 4n \log m \leq 6n + 4n \log (AD^{\alpha}) \leq 6n + 4n (\log A + \alpha kn) < e^{kn}-3n$$
for sufficiently large $k$. Therefore, we can choose $D$ large enough that the robber stays sufficiently far from the boundary of $B$ before being confined to a planar region of $G^T$. Finally, once the robber is confined to a planar region in $G^T$ with at most six cops, we can surround the robber with at most eight cops by Observation \ref{obs8}.
\end{proof}

\begin{theorem}
There exists a toroidal graph $G$ such that $s(G) \geq 7$.
\end{theorem}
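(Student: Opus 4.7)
The plan is to imitate the soccer-ball-style planar construction on the torus, producing a toroidal graph in which every ``high'' vertex has degree $12$ with exactly six degree-$12$ neighbors, so that $6$ cops are too few to cut off a robber who hops among these vertices. Concretely, let $T$ be a $6$-regular triangulation of the torus---for instance, the graph on vertex set $\mathbb{Z}_n \times \mathbb{Z}_n$ (with $n \geq 3$) whose edges join $(i,j)$ to $(i+1,j)$, $(i,j+1)$, and $(i+1,j+1)$. This $T$ is $6$-regular, every face is a triangle, and it embeds on the torus. Form $G$ by inserting a new vertex inside each triangular face of $T$ and joining it to the three corners; since each triangle of $T$ is subdivided into three triangles, $G$ is toroidal. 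In $G$, each original vertex of $T$ has degree $6+6=12$ (the six $T$-edges plus one edge to each of the six face-vertices of the six triangles meeting the vertex), while each face-vertex has degree $3$. Thus every degree-$12$ vertex of $G$ has exactly six degree-$12$ neighbors (its $T$-neighbors) and six degree-$3$ neighbors (the surrounding face-vertices).

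To conclude $s(G)\geq 7$, I would exhibit a robber strategy against six cops. The robber starts on an original vertex of $T$ and maintains the invariant that he always stands on a degree-$12$ vertex. On each of his turns, if no cop occupies his current vertex $r$ then he stays; otherwise at least one cop sits on $r$, so at most five of the six cops lie elsewhere, hence at most five of the six degree-$12$ neighbors of $r$ are cop-occupied and the robber can move to some unoccupied degree-$12$ neighbor $r'$. A vertex of degree $12$ can never be surrounded by only $6$ cops, and the robber never ends a turn on a cop-occupied vertex, so the cops never satisfy their winning condition. Since $G$ is finite, a game position must eventually repeat and the robber wins.

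The argument has no real obstacle; the only thing to verify is the neighborhood count for degree-$12$ vertices, which is immediate because each original vertex lies in exactly six triangles of $T$ and the adjacencies among original vertices in $G$ are exactly those inherited from $T$ (the face-vertices are the only new vertices, each of degree $3$, and no new original--original edges are introduced). Given that, the robber strategy above goes through and we conclude $s(G)\geq 7$.
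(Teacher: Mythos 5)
Your proposal is correct and follows essentially the same approach as the paper: both take a $6$-regular triangulation of the torus, add a vertex of degree $3$ inside each triangular face, and let the robber hop among the degree-$12$ original vertices, which six cops can neither surround nor block (at most five cops remain off the robber's vertex once one occupies it, versus six original neighbors). Your explicit $\mathbb{Z}_n \times \mathbb{Z}_n$ realization of the triangulation and the verification of the neighborhood counts are a welcome concretization but not a different argument.
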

\begin{proof}
Let $H$ be a $6$-regular tiling of equilateral triangles on the torus. Let $G$ be a graph obtained by adding a vertex $v$ at each face $f$ of $H$ and adding an edge from $v$ to every vertex of $f$. Clearly $G$ is toroidal. We call the vertices from $H$ \textit{original vertices}, and we call the additional vertices \textit{face vertices}. We let the robber play on the original vertices, each of which has degree $12$ in $G$. 

Suppose there are six cops. We note that the robber cannot be surrounded on an original vertex. Suppose that a cop moves to occupy $r$. At this point, at most five neighboring original vertices of $r$ are occupied by cops. Thus there exists an unoccupied original vertex adjacent to $r$, and the robber moves to this vertex. The robber repeats this process indefinitely and wins. Therefore, six cops are insufficient to surround the robber on $G$, and $s(G) \geq 7$.
\end{proof}

We can also use the methods in the proof of Theorem \ref{thmTorus} to prove an upper bound on the surrounding cop number of bipartite toroidal graphs. Furthermore, we can show that this upper bound is tight. 

\begin{theorem}
Let $G$ be a bipartite toroidal graph. Then $s(G) \leq 5$.
\label{thmBipTor}
\end{theorem}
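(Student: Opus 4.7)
The plan is to adapt the strategy of Theorem \ref{thmTorus} to the bipartite setting, substituting each guarding tool with its bipartite counterpart so that fewer cops are needed. Specifically, Lemma \ref{lemmaBipPath} (two cops to guard a geodesic) plays the role of Lemma \ref{lemmaThreeCops}, Lemma \ref{lemmaOneCop} (one cop to guard a geodesically closed path) plays the role of Lemma \ref{lemmaTwoCops}, and Lemma \ref{lemmaSwitch} plays the role of Lemma \ref{lemmaGenSwitch}. First I would record a bipartite analog of Observation \ref{obs8}: inspecting the proof of Theorem \ref{thmBip}, if we allow the initial $P_2$ to be guarded by two cops (as a generic geodesic) rather than by one cop (as a geodesically closed edge), the same case analysis still works with exactly one additional cop, yielding an upper bound of five cops.

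Next I would set up the framework of Theorem \ref{thmTorus}. Let $G^T$ be an infinite planar cover of $G$ with polynomial growth, which exists by Lemma \ref{lemmaPoly}; since covering maps preserve the lengths of closed walks and $G$ is bipartite, $G^T$ is bipartite as well. Fix a preimage $r_0$ of the robber's starting vertex and focus on surrounding $r_0$. Let $B = B_{r_0}(D)$ for a large radius $D$ to be chosen, and let $Q$ be its circumference with cyclic ordering $(q_1, \dots, q_m)$. A direct bipartite analog of Lemma \ref{lemma2n} (using Lemma \ref{lemmaBipPath} in place of Lemma \ref{lemmaThreeCops}) shows that any geodesic from $r_0$ to a vertex of $Q$ can be seized by two cop preimages before the robber moves more than $2n$ farther from $r_0$.

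The strategy now mimics the one in Theorem \ref{thmTorus}. Initially, apply Lemma \ref{lemmaExt} to obtain two geodesics $P_1, P_2$ from $r_0$ to $q_1$ whose union encloses every $(r_0, q_1)$-geodesic, and guard both with two cops each (four cops). If the robber is trapped between $P_1$ and $P_2$, both paths are geodesically closed within the enclosed planar region, so the boundary cost drops to one cop each and the bipartite analog of Observation \ref{obs8} captures him with five cops total. Otherwise, $P_1$ and $P_2$ are already geodesically closed with respect to the robber's region, needing only one cop each, and we spend two more cops to guard a new geodesic from $r_0$ to $q_{[m/2]}$; the robber is then confined to a region bounded by one geodesic (two cops) and one geodesically closed path (one cop), establishing the invariant with three cops total.

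In the inductive step, the robber is confined between $P_1$ (geodesic, two cops) to $q_a$ and $P_2$ (geodesically closed, one cop) to $q_b$; Lemma \ref{lemmaExt} supplies a new geodesic $P_3$ from $r_0$ to $q_a$ with $P_1 \cup P_3$ enclosing all $(r_0, q_a)$-geodesics in the region, and we guard $P_3$ with two cops, using $2+1+2 = 5$ cops momentarily. If the robber is pushed inside $P_1 \cup P_3$, he is confined to a planar region where both boundary paths are geodesically closed, and five cops suffice by the bipartite analog of Observation \ref{obs8}. Otherwise $P_3$ becomes geodesically closed, $P_1$ is discarded, and halving the cyclic range via a new geodesic to $q_{[(a+b)/2]}$ (two cops) reestablishes the invariant at three cops, using at most $1+1+2 = 4$ cops during the exchange. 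After at most $\log m$ iterations the robber is trapped between geodesics to cyclically adjacent $q_a, q_{a+1}$, hence within a planar region of $G^T$ surroundable with five cops. Choosing $D = e^{kn}$ with $k$ sufficiently large, exactly as in Theorem \ref{thmTorus}, keeps the robber inside $B$ throughout by polynomial growth. The main obstacle is the cop-count bookkeeping: I must certify at every juncture that five cops are never simultaneously exceeded, and in particular justify the bipartite analog of Observation \ref{obs8} carefully enough to handle both the initial-enclosure case and the final cyclically-adjacent case.
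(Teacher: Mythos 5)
Your proposal follows essentially the same route as the paper, which itself only sketches this proof: substitute the bipartite guarding lemmas (two cops per geodesic, one per geodesically closed path) into the machinery of Theorem \ref{thmTorus}, note the analog of Observation \ref{obs8} for the bipartite planar endgame, and verify that the cop count peaks at five; your bookkeeping matches the paper's Figure \ref{figBipTor} exactly. One inessential slip: when the robber is enclosed between the two geodesics supplied by Lemma \ref{lemmaExt}, those paths need not be geodesically closed with respect to the enclosed region (that region contains all the other $(r_0,q_1)$-geodesics), but this does not affect your count since you invoke the five-cop analog of Observation \ref{obs8} in that case anyway.
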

\begin{proof}
The proof technique is nearly identical to that of Theorem \ref{thmTorus}. When $G$ is bipartite, however, paths that are geodesic with respect to the robber's region can be guarded with two cops, and paths that are geodesically closed with respect to the robber's region can be guarded with one cop. In Figure \ref{figBipTor}, we give a sketch of how the ideas of Theorem \ref{thmTorus} can be applied to a bipartite toroidal graph $G$ to restrict the robber to a planar region of the planar tiling $G^T$ using five cops. Then, by following the strategy of Theorem \ref{thmBip}, the robber can be surrounded using five cops.
\end{proof}

\begin{figure}
\begin{tabular} {l l l}
\begin{tikzpicture}
[scale=1,auto=left,every node/.style={circle,fill=gray!30}]
\draw (0,0) circle [radius=1.5];
\node (z) at (-0.75,0.45) [fill = white]  {$2$};
\node (z) at (0.2,-0.4) [fill = white]  {$r_0$};
\node (z) at (-0.75,-0.4) [fill = white]  {$2$};
\node (z) at (0.75,0) [fill = white]  {$R$};
\node (z) at (0,-2) [fill = white]  {(a)};
\filldraw (-1.5,0) circle (3pt);
\filldraw (0,0) circle (3pt);
\draw (0,0)--(-1.5,0);
\draw [line width=0.5mm] (-1,0) -- (-1.5,0);
\node (z) at (-2,0) [fill = white]  {$q_1$};

\draw [line width=0.5mm](0,0) to  [out=150,in=30] (-1.5,0) ;
\draw[line width=0.5mm] (0,0) to [out=-150,in=-30] (-1,0) ;
\end{tikzpicture} &
\begin{tikzpicture}
[scale=1,auto=left,every node/.style={circle,fill=gray!30}]
\node (z) at (0.2,-0.4) [fill = white]  {$r_0$};
\draw (0,0) circle [radius=1.5];
\node (z) at (0.75,0) [fill = white]  {$R$};
\node (z) at (-0.75,0.45) [fill = white]  {$1$};
\node (z) at (-0.75,-0.4) [fill = white]  {$1$};
\node (z) at (0,-2) [fill = white]  {(b)};
\filldraw (-1.5,0) circle (3pt);
\filldraw (0,0) circle (3pt);
\draw (-1,0)--(-1.5,0);
\node (z) at (-2,0) [fill = white]  {$q_1$};

\draw (0,0) to [out=150,in=30] (-1.5,0) ;
\draw (0,0) to [out=-150,in=-30] (-1,0) ;
\end{tikzpicture} &
\begin{tikzpicture}
[scale=1,auto=left,every node/.style={circle,fill=gray!30}]
\node (z) at (0.2,-0.4) [fill = white]  {$r_0$};
\node (z) at (2.2,0) [fill = white]  {$q_{[m/2]}$};
\node (z) at (0,-2) [fill = white]  {(c)};
\node (z) at (0,0.8) [fill = white]  {$R$};
\draw (0,0) circle [radius=1.5];
\node (z) at (-0.75,-0.1) [fill = white]  {$1$};
\node (z) at (0.8,-0.3) [fill = white]  {$2$};
\filldraw (-1.5,0) circle (3pt);
\filldraw (0,0) circle (3pt);
\filldraw (1.5,0) circle (3pt);

\draw (0,0)--(1.5,0);
\node (z) at (-2,0) [fill = white]  {$q_1$};

\draw (0,0) to [out=150,in=30] (-1.5,0) ;

\end{tikzpicture}
 \\
\begin{tikzpicture}
[scale=1,auto=left,every node/.style={circle,fill=gray!30}]
\node (z) at (0.2,-0.4) [fill = white]  {$r_0$};
\node (z) at (2.2,0) [fill = white]  {$q_{[m/2]}$};
\node (z) at (-0.2,0.8) [fill = white]  {$R$};
\node (z) at (0,-2) [fill = white]  {(d)};
\draw (0,0) circle [radius=1.5];
\node (z) at (0.8,-0.3) [fill = white]  {$2$};
\node (z) at (-0.75,-0.1) [fill = white]  {$1$};
\node (z) at (0.8,0.6) [fill = white]  {$2$};

\filldraw (-1.5,0) circle (3pt);
\filldraw (0,0) circle (3pt);
\filldraw (1.5,0) circle (3pt);

\draw  (0,0)--(1.5,0);
\node (z) at (-2,0) [fill = white]  {$q_1$};

\draw (0,0) to [out=150,in=30] (-1.5,0) ;

\draw (0,0) to [out=30,in=150] (1.3,0) ;
\draw [line width=0.5mm] (0,0) to [out=50,in=130] (1.5,0) ;

\end{tikzpicture} &
\begin{tikzpicture}
[scale=1,auto=left,every node/.style={circle,fill=gray!30}]
\node (z) at (2.2,0) [fill = white]  {$q_{[m/2]}$};
\node (z) at (0.2,-0.4) [fill = white]  {$r_0$};
\node (z) at (-0.2,0.8) [fill = white]  {$R$};
\draw (0,0) circle [radius=1.5];
\node (z) at (0,-2) [fill = white]  {(e)};
\node (z) at (0.8,0.6) [fill = white]  {$1$};
\node (z) at (-0.75,-0.1) [fill = white]  {$1$};

\filldraw (-1.5,0) circle (3pt);
\filldraw (0,0) circle (3pt);
\filldraw (1.5,0) circle (3pt);

\node (z) at (-2,0) [fill = white]  {$q_1$};

\draw (0,0) to [out=150,in=30] (-1.5,0) ;

\draw (0,0) to [out=50,in=130] (1.5,0) ;
\end{tikzpicture} &
\begin{tikzpicture}
\node (z) at (0.2,-0.4) [fill = white]  {$r_0$};
[scale=1,auto=left,every node/.style={circle,fill=gray!30}]
\node (z) at (-0.5,0.8) [fill = white]  {$R$};
\node (z) at (0.2,2) [fill = white]  {$q_{[m/4]}$};
\node (z) at (0,-2) [fill = white]  {(f)};
\node (z) at (0.2,0.8) [fill = white]  {$2$};
%\node (z) at (0.8,0.6) [fill = white]  {$1$};
\node (z) at (-0.75,-0.1) [fill = white]  {$1$};

\node (z) at (2.2,0) [fill = white]  {$q_{[m/2]}$};
\draw (0,0) circle [radius=1.5];
\filldraw (-1.5,0) circle (3pt);
\filldraw (0,0) circle (3pt);
\filldraw (0,1.5) circle (3pt);
\filldraw (1.5,0) circle (3pt);

\node (z) at (-2,0) [fill = white]  {$q_1$};

\draw (0,0) to [out=150,in=30] (-1.5,0) ;

%\draw (0,0) to [out=50,in=130] (1.5,0) ;

\draw (0,0)--(0,1.5);
\end{tikzpicture} \\
\end{tabular}
\caption{The figures show the initial maneuvers of the strategy of Theorem \ref{thmBipTor}. The figures have the same meanings as those of Figure \ref{figTorus}. In these figures, however, we let two cops guard a general path that is geodesic with respect to $R$, and we show one cop guarding a path that is geodesically closed with respect to $R$.}
\label{figBipTor}
\end{figure}
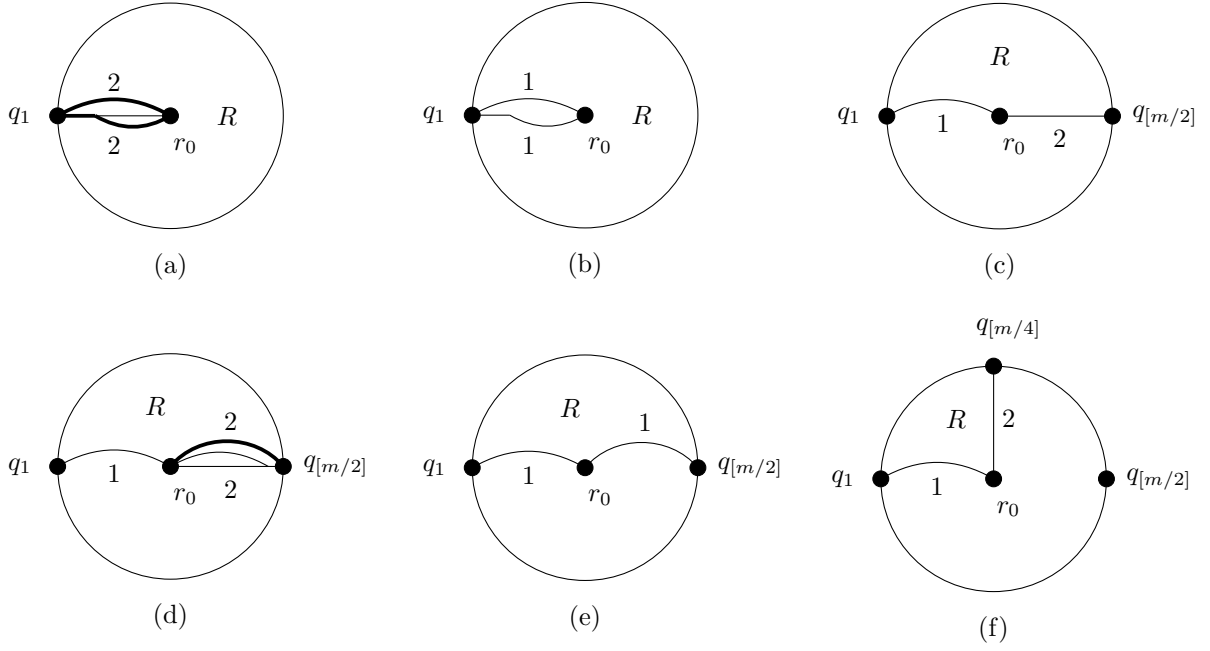

\begin{theorem}
There exists a bipartite toroidal graph $G$ with $s(G) = 5$.
\end{theorem}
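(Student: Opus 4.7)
The plan is to exhibit a specific bipartite toroidal graph $G$ on which the robber defeats $4$ cops; combined with Theorem \ref{thmBipTor} this will give $s(G)=5$.

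I would construct $G$ by a double-subdivision trick. Let $H$ be a $4$-regular, triangle-free, toroidal graph, for example the toroidal grid $C_n \square C_n$ with even $n \geq 4$. Form $G$ from $H$ by replacing each edge $e=uv$ of $H$ with a $4$-cycle on $u$, $(e,1)$, $v$, $(e,2)$, where $(e,1)$ and $(e,2)$ are two new vertices of degree $2$. Call the vertices of $V(H)$ \emph{red} and the vertices of $E(H) \times \{1,2\}$ \emph{blue}; every edge of $G$ joins a red to a blue vertex, so $G$ is bipartite. Red vertices have degree $8$ in $G$. To see that $G$ is toroidal, start from any toroidal embedding of $H$ and, for each edge $e$, place $(e,1)$ just to one side of $e$ and $(e,2)$ just to the other side, routing the four new edges close to $e$; an Euler characteristic check gives $|V(G)|-|E(G)|+|F(G)| = |V(H)|-|E(H)|+|F(H)|=0$, so $G$ has genus $1$.

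Next I would mimic the bipartite planar tight example. The robber starts at a red vertex $r$ and does not move until some cop first occupies $r$. Since $r$ has degree $8 > 4$ in $G$, the robber cannot be surrounded while waiting. When a cop arrives at $r$, consider the four red vertices at distance $2$ from $r$ in $G$; these are exactly the four $H$-neighbors $v_1, v_2, v_3, v_4$ of $r$. The key claim is that each of the three cops not at $r$ lies in $N_G[v_i]$ for at most one index $i$. Indeed, a cop at a red vertex $w$ has $N_G[w] \cap \{v_1,\ldots,v_4\} \subseteq \{w\}$ because the $v_i$'s have no red neighbors in $G$; and a cop at a blue vertex $(uv,j)$ has $N_G[(uv,j)] \cap V(H) = \{u,v\}$, so if both $u$ and $v$ were in $\{v_1,\ldots,v_4\}$ then the edge $uv$ of $H$ would create a triangle $ruv$ in $H$, contradicting triangle-freeness. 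So at most three of $v_1,\ldots,v_4$ are blocked, and some $v_i$ satisfies $N_G[v_i] \cap \{\textrm{cops}\} = \emptyset$. As in Theorem \ref{thmBip}, the robber now takes two moves along one of the two $(r,v_i)$-paths of length $2$ in $G$; since initially no cop lay in $N_G[v_i]$, during the single cop turn between these two robber moves no cop can reach $v_i$, so $v_i$ is unoccupied when the robber arrives. From $v_i$ the robber repeats the same strategy indefinitely, never being surrounded, so $s(G) \geq 5$.

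The main obstacle is that triangle-free simple toroidal graphs have average degree at most $4$, so a plain subdivision of such an $H$ only yields red vertices of degree $4$, which can then be surrounded by four cops. Doubling the edges of $H$ (equivalently, using two subdivision vertices per edge) doubles the red degree to $8$, defeating the surround condition, while preserving the property that no two distinct $H$-neighbors of any vertex share a common neighbor in $G$; this is exactly what limits each cop to blocking at most one $v_i$. Verifying in detail that $G$ is bipartite and toroidal, and that the transitional blue position in the escape is safe (its two red neighbors being $r$ and $v_i$, of which $v_i$ is unblocked), is then routine.
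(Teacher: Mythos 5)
Your argument is correct and follows essentially the same approach as the paper: cite Theorem \ref{thmBipTor} for the upper bound, and build a bipartite toroidal graph from the toroidal grid in which the original grid vertices have degree $8$ and retain four grid-neighbour escape targets that the three cops not occupying the robber's vertex cannot all cover. The paper's augmentation (a quadrilateral added inside each face, joined to the vertices of that face) makes the escape a single step between adjacent degree-$8$ vertices, whereas your doubled-and-subdivided edges force a two-step escape and hence the extra (correct) counting argument borrowed from the bipartite planar lower bound; both constructions work.
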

\begin{proof}
Let $C_m, C_n$ be cycles respectively of length $m,n \geq 2$. Let $H = C_m \square C_n$, and let $H$ have a grid embedding on the torus. We construct a graph $G$ as follows: at each face $f$ of $H$, we add a quadrilateral, and we add an edge from each vertex of the quadrilateral to a vertex of $f$ in a way that does not introduce crossings. $G$ is clearly bipartite. We show that $s(G) = 5$.

By Theorem \ref{thmBipTor}, $s(G) \leq 5$. We call the vertices of $G$ that originate as vertices of $H$ \textit{original vertices.} We note that each original vertex is of degree $8$ and has four neighbors that are original vertices. We show that four cops are not sufficient to surround the robber. We let the robber begin at an original vertex $r$. As $r$ has degree $8$, robber cannot be surrounded without a cop first occupying $r$. If a cop occupies $r$, then at most three neighboring original vertices of $r$ are occupied by cops. Therefore, there exists an unoccupied neighboring original vertex of $r$, and the robber can move to this original vertex. The robber can repeat this process indefinitely. Therefore, exactly five cops are needed to surround the robber on $G$.
\end{proof}

\section{Outerplanar graphs}
In this section, we consider the game of Surrounding Cops and Robbers on outerplanar graphs. N.E. Clarke shows in \cite{Clarke} that the cop number of outerplanar graphs is at most $2$. In this section, we will show that for outerplanar graphs $G$, $s(G) \leq 3$.

The following lemma is given by G. Chartrand and F. Harary in \cite{Chartrand} and is fairly straightforward.
\begin{lemma}
Let $G$ be a two-connected outerplanar graph. Then $G$ is Hamiltonian, and in any outerplanar embedding of $G$, the facial walk of the exterior face of $G$ is a Hamiltonian cycle. 
\end{lemma}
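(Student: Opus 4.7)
The plan is to combine two standard facts: (i) by the defining property of outerplanarity, every vertex of $G$ appears on the boundary walk of the exterior face of any outerplanar embedding, and (ii) in a $2$-connected plane graph, the boundary of every face is a cycle (a closed walk with no repeated vertex or edge). Together these give immediately that the exterior facial walk is a Hamiltonian cycle of $G$, which is exactly the conclusion we want.

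First I would fix an outerplanar embedding of $G$, let $F_{\infty}$ denote its exterior face, and let $W = (v_0, v_1, \dots, v_{k-1}, v_0)$ denote the facial walk around $F_\infty$. Since $G$ is outerplanar, every vertex of $V(G)$ appears in $W$, so it remains only to argue that $W$ is in fact a cycle, i.e.\ that no vertex is repeated inside $W$.

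The core step is to verify (ii) under our hypothesis. Suppose toward a contradiction that some vertex $v$ appears at two distinct positions $v_i = v_j = v$ in $W$, with $0 \leq i < j \leq k-1$. Using the rotation at $v$ given by the embedding, one can split $W$ into two closed subwalks $W_1 = (v_i, v_{i+1}, \dots, v_j)$ and $W_2 = (v_j, v_{j+1}, \dots, v_{i+k})$, each of which is a closed walk in $G$ at $v$ lying in a different rotational sector around $v$. Any path between a non-$v$ vertex of $W_1$ and a non-$v$ vertex of $W_2$ would, together with portions of $W_1$ and $W_2$, create a closed curve around $v$ in the plane that the rest of $G$ cannot cross; hence every such path must pass through $v$, showing that $v$ is a cut vertex. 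This contradicts $2$-connectivity, so $W$ has no repeated vertex.

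Thus $W$ is a cycle containing every vertex of $G$, i.e.\ a Hamiltonian cycle, and it is by construction the facial walk of $F_\infty$. The main (and only) subtlety is the planar topological argument that a repeated vertex on a face boundary of a $2$-connected plane graph must be a cut vertex; this is a standard consequence of Whitney's theorem or can be established directly from the rotation system as sketched above, and is the sole place where $2$-connectivity is used.
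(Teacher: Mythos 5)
The paper does not actually prove this lemma: it is stated with the remark that it ``is given by G.~Chartrand and F.~Harary'' and is cited to their paper on outerplanar graphs, so there is no internal proof to compare against. Your argument supplies the standard proof that the citation is standing in for, and its overall structure is correct: outerplanarity places every vertex on the exterior facial walk, and two-connectivity forces that walk to be a cycle, so the walk is a Hamiltonian cycle. The second ingredient (face boundaries of $2$-connected plane graphs are cycles) is indeed the only nontrivial step, and reducing it to ``a repeated vertex on a facial walk is a cut vertex'' is the right move.

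The one place where your sketch is loosely worded is the construction of the separating curve. As written, you build a closed curve out of a hypothetical $x$--$y$ path avoiding $v$ together with ``portions of $W_1$ and $W_2$'' and assert it surrounds $v$; whether that particular curve encloses $v$ depends on the embedding and is not immediate. The cleaner (and standard) version is to use only the face: since $v$ occurs at two distinct corners of the exterior face, one can draw an arc $\alpha$ inside the open exterior face joining those two corners, so that $\gamma = \alpha \cup \{v\}$ is a closed Jordan curve meeting $G$ only at $v$. The edges of $W_1$ and of $W_2$ incident to $v$ leave $v$ into different components of $\mathbb{R}^2 \setminus \gamma$, and since $G - v$ is disjoint from $\gamma$, the non-$v$ vertices of $W_1$ and $W_2$ (which exist because $G$ is simple) lie in different components of $G - v$. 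This yields the cut vertex and the desired contradiction. With that adjustment your proof is complete; it is the argument one would expect Chartrand and Harary's cited proof to contain, so there is no substantive divergence from the paper, only from its decision to cite rather than prove.
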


We will prove that $s(G) \leq 3$ for two-connected outerplanar graphs $G$. Then we will use this to show that $s(G) \leq 3$ for all outerplanar graphs $G$.

For a graph $G$ with an outerplanar embedding, we will say that an edge that is adjacent to the exterior face of $G$ is called an \textit{exterior edge}. We will say that any edge that is not an exterior edge is an \textit{interior edge.}

\begin{lemma}
Let $G$ be a two-connected outerplanar graph with a fixed outerplanar embedding. Let $(v_0, \dots, v_{n-1})$ be the Hamiltonian cycle given by the exterior facial walk of $G$. Let $v_iv_j$ be an interior edge of $G$. Then $G \setminus \{v_i,v_j\}$ has two components, which are induced by $(v_{i+1}, \dots, v_{j-1})$ and $(v_{j+1}, \dots, v_{i-1})$ (where addition is considered modulo $n$).
\label{lemmaEdge}
\end{lemma}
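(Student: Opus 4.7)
The plan is to exploit planarity of the outerplanar embedding. Let $V_A := \{v_{i+1}, \dots, v_{j-1}\}$ and $V_B := \{v_{j+1}, \dots, v_{i-1}\}$, with indices modulo $n$. Since $v_iv_j$ is an interior edge, it is not an edge of the exterior Hamiltonian cycle, so $i$ and $j$ are nonadjacent on that cycle; hence $V_A$ and $V_B$ are both nonempty, and together they partition $V(G)\setminus\{v_i,v_j\}$. My first step is to note that the subgraph induced on $V_A$ is connected: the consecutive Hamiltonian-cycle edges $v_{i+1}v_{i+2}, v_{i+2}v_{i+3}, \dots, v_{j-2}v_{j-1}$ are all exterior edges, so they remain in $G\setminus\{v_i,v_j\}$ and provide a path through $V_A$. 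The same argument applied to the other arc shows $G[V_B]$ is connected. So $G\setminus\{v_i,v_j\}$ has at most two components, each containing one of the arcs.

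Next I would show there is no edge from $V_A$ to $V_B$. This is where planarity enters. In the fixed outerplanar drawing, the Hamiltonian cycle $C = (v_0, \dots, v_{n-1})$ bounds a closed disk $D$ in the plane whose interior contains every non-exterior edge of $G$. The chord $v_iv_j$ is drawn as a Jordan arc inside $D$, joining two points of $\partial D = C$, and therefore splits $D$ into two closed subdisks $D_A$ and $D_B$, whose boundaries are the Jordan curves $J_A$ consisting of $v_iv_j$ together with the Hamiltonian arc $v_i v_{i+1} \cdots v_{j-1} v_j$, and $J_B$ consisting of $v_iv_j$ together with $v_j v_{j+1} \cdots v_{i-1} v_i$. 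The vertices of $V_A$ lie on $J_A\setminus\{v_i,v_j\}$, that is, in the relative interior of the arc on the boundary of $D_A$ (and analogously for $V_B$ and $D_B$). Any edge from a vertex of $V_A$ to a vertex of $V_B$, embedded without crossings, would be a curve inside $D$ connecting a boundary point of $D_A$ to a boundary point of $D_B$; but the only common boundary points of $D_A$ and $D_B$ are the endpoints of the chord $v_iv_j$, so such a curve would necessarily cross $v_iv_j$, contradicting planarity.

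Combining these two observations gives the lemma: $G\setminus\{v_i,v_j\}$ splits into precisely the two connected subgraphs induced by $V_A$ and $V_B$. The only delicate part is the Jordan-curve step, which I would write out with care to make explicit that the chord really does separate the open disk $\mathrm{int}(D)$ into two pieces whose boundary arcs are exactly the two Hamiltonian subpaths; aside from this standard planar-topology move, the proof is routine.
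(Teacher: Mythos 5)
Your proof is correct and follows essentially the same route as the paper's: both establish connectivity of each arc via the exterior Hamiltonian-cycle edges and then use the planarity of the embedding (the chord $v_iv_j$ separating the disk bounded by the outer cycle) to rule out edges between the two arcs. Your write-up is somewhat more explicit about the Jordan-curve step, but the underlying argument is the same.
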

\begin{proof}
Clearly the vertices $\{v_{i+1}, \dots, v_{j-1}\}$ belong to a single component of $G \setminus \{v_i,v_j\}$, as do the vertices $\{v_{j+1}, \dots, v_{i-1}\}$. Note that neither of these sets is empty, as $v_i, v_j$ are not adjacent on the outer face of $G$. Furthermore, the vertices $(v_i, v_{i+1}, \dots, v_{j-1}, v_j,v_i)$ form a cycle $C$. As $G$ is outerplanar, it follows that any non-exterior edge adjacent to a vertex of $\{v_{i+1}, \dots, v_{j-1}\}$ must be drawn in the interior of $C$ and therefore cannot have an endpoint that does not belong to $C$. We hence see that $\{v_{i+1}, \dots, v_{j-1}\}$ is a maximal connected component of $G \setminus \{v_i,v_j\}$, as is $\{v_{j+1}, \dots, v_{i-1}\}$ by a similar argument. This proves the lemma.
\end{proof}
\begin{corollary}
Let $G$ be a two-connected outerplanar graph with a fixed outerplanar embedding. Let $(v_0, \dots, v_{n-1})$ be the Hamiltonian cycle given by the exterior facial walk of $G$. Suppose that two cops occupy vertices $v_i,v_j$, where $v_iv_j \in E(G)$. Then the robber is restricted to either the vertex set $\{v_{i+1}, \dots, v_{j-1}\}$ or the vertex set $\{v_{j+1}, \dots, v_{i-1}\}$.
\end{corollary}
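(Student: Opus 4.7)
The plan is to obtain this as a nearly immediate consequence of Lemma \ref{lemmaEdge}, together with the rule that the robber cannot end a turn on a cop-occupied vertex.

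First I would dispose of the degenerate case where $v_iv_j$ is an exterior edge. In that case $j \equiv i+1 \pmod n$ or $i \equiv j+1 \pmod n$, and one of the two sets $\{v_{i+1}, \dots, v_{j-1}\}$, $\{v_{j+1}, \dots, v_{i-1}\}$ is empty while the other is exactly $V(G) \setminus \{v_i, v_j\}$. Since the robber must occupy some vertex other than $v_i, v_j$, he is trivially restricted to the nonempty set.

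For the main case, suppose $v_iv_j$ is an interior edge. Then Lemma \ref{lemmaEdge} gives us directly that $G \setminus \{v_i, v_j\}$ consists of exactly the two components induced by $\{v_{i+1}, \dots, v_{j-1}\}$ and $\{v_{j+1}, \dots, v_{i-1}\}$. Because the robber is forbidden from ending a turn at $v_i$ or $v_j$ (both cop-occupied), he currently occupies a vertex in one of these two components; call it $A$ and call the other $B$. Any path in $G$ from $A$ to $B$ must traverse $v_i$ or $v_j$, so each robber move either stays in $A$ or would require stepping onto $v_i$ or $v_j$, which is illegal. Hence the robber is confined to $A$.

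There is really no obstacle to overcome here; the corollary is a game-theoretic restatement of the structural statement in Lemma \ref{lemmaEdge}, so the entire substance of the argument is invoking the lemma and noting that a separator of $G$ consisting of cop-occupied vertices acts as a barrier for the robber.
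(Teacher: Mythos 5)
Your proposal is correct and follows essentially the same route as the paper: the same case split into exterior and interior edges, with the interior case reduced directly to Lemma \ref{lemmaEdge} and the observation that the cop-occupied pair $\{v_i,v_j\}$ separates the two components. You are slightly more explicit than the paper about why a cop-occupied separator blocks the robber, but there is no substantive difference.
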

\begin{proof}
If $v_iv_j$ is an exterior edge of $G$, then without loss of generality, $\{v_{i+1}, \dots, v_{j-1}\}$ contains all vertices of $G \setminus \{v_i,v_j\}$, and clearly the robber is restricted to $G \setminus \{v_i,v_j\}$. If $v_iv_j$ is an interior edge of $G$, then by Lemma \ref{lemmaEdge}, the robber is restricted to a component of $G \setminus \{v_i, v_j\}$, and the result follows.
\end{proof}

The following lemma shows that in a two-connected outerplanar graph, the robber's region can be reduced using three cops.

\begin{lemma}
Let $G$ be a two-connected outerplanar graph with a fixed outerplanar embedding. Let $(v_0, \dots, v_{n-1})$ be the Hamiltonian cycle given by the exterior facial walk of $G$. Let $v_iv_j$ be an edge of $G$, and suppose that two cops occupy $v_i$ and $v_j$, restricting the robber to vertex set $X = \{v_{i+1}, \dots, v_{j-1}\}$. Then there exists a strategy involving $3$ cops by which two cops guard adjacent vertices $v_k, v_l$ and restrict the robber to a vertex set $\{v_{k+1}, \dots, v_{l-1}\} \subsetneq X$.
\label{lemmaSmallerPath}
\end{lemma}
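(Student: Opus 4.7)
The plan is to exploit the outerplanar face structure of $G$ on the robber's side of $v_iv_j$ and use the third cop $C_3$ to walk two guarding cops one face-vertex further into the robber's territory at each iteration.

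Since $G$ is 2-connected outerplanar, the unique interior face $F$ incident to $v_iv_j$ on the robber's side has boundary formed by the chord $v_iv_j$ together with a path $v_i=v_{a_0},v_{a_1},\dots,v_{a_s},v_{a_{s+1}}=v_j$ with $i=a_0<a_1<\cdots<a_{s+1}=j$ and $s\ge 1$, each consecutive pair $v_{a_t}v_{a_{t+1}}$ being an edge of $G$. The embedding also forces the following neighborhood restriction: for every $t\ge 0$ and every vertex $v_m\in\{v_{a_t+1},\dots,v_{j-1}\}$, all neighbors of $v_m$ in $G$ have their index in $\{a_t,a_t+1,\dots,j\}$. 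Indeed, any edge from $v_m$ to a vertex of index outside $\{i,i+1,\dots,j\}$ would cross the chord $v_iv_j$, and any edge to a vertex of index in $\{i,\dots,a_t-1\}$ would cross one of the edges $v_{a_{t'}}v_{a_{t'+1}}$ with $t'<t$ or pass through the interior of the face $F$, either of which is forbidden.

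I first walk $C_3$ through $G$ to $v_i$, which is safe because $C_1,C_2$ on $v_i,v_j$ already confine the robber to $X$ by the corollary of Lemma \ref{lemmaEdge}. I then iterate, maintaining at the start of iteration $t$ the invariant that $C_3$ sits on $v_{a_{t-1}}$ (with the convention $v_{a_{-1}}:=v_i$), $C_1$ sits on $v_{a_t}$, $C_2$ sits on $v_j$, and the robber lies in $\{v_{a_t+1},\dots,v_{j-1}\}$. In one cop move I advance $C_3$ from $v_{a_{t-1}}$ to $v_{a_t}$ and $C_1$ from $v_{a_t}$ to $v_{a_{t+1}}$, each along a boundary edge of $F$. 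By the neighborhood restriction the robber cannot leave $\{v_{a_t+1},\dots,v_{j-1}\}$, and he cannot end his turn on $v_{a_{t+1}}$, so after his response he lies either in $\{v_{a_t+1},\dots,v_{a_{t+1}-1}\}$ or in $\{v_{a_{t+1}+1},\dots,v_{j-1}\}$. In the former case $C_3$ and $C_1$ guard the adjacent vertices $v_{a_t},v_{a_{t+1}}$, and the corollary of Lemma \ref{lemmaEdge} confines the robber to $\{v_{a_t+1},\dots,v_{a_{t+1}-1}\}\subsetneq X$, completing the proof with $(v_k,v_l)=(v_{a_t},v_{a_{t+1}})$. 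In the latter case the invariant re-establishes at level $t+1$ and I repeat.

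Since each iteration strictly increases $a_t$ and $a_{s+1}=j$, the procedure must terminate by iteration $t=s$, when $C_1$ reaches $v_{a_{s+1}}=v_j$; the guarded edge is then $v_{a_s}v_j$ and confines the robber to $\{v_{a_s+1},\dots,v_{j-1}\}\subsetneq X$. The main technical hurdle I foresee is justifying the neighborhood restriction cleanly at every iteration; this is where outerplanarity does all the work, via the fact that $F$ is a face (so no chord of $G$ may enter its interior) together with repeated applications of the Lemma \ref{lemmaEdge}-style argument to the chord $v_iv_j$ and to the face boundary edges $v_{a_{t'}}v_{a_{t'+1}}$.
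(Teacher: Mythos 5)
Your proof is correct, and it reaches the goal by a genuinely different route than the paper. The paper guards a $(v_i,v_j)$-geodesic $P=(u_0,\dots,u_q)$ of $H=G[v_i,\dots,v_j]\setminus(v_iv_j)$ using the adaptive shadow-stalking strategy of Lemma \ref{lemmaThreeCops}, then invokes Lemma \ref{lemmaEdge} to identify the robber's component as some $P(u_r,u_{r+1})\setminus\{u_r,u_{r+1}\}$ and finally walks two cops onto $u_r,u_{r+1}$. Your path is in fact the same object --- since each consecutive pair $v_{a_t},v_{a_{t+1}}$ separates $v_i$ from $v_j$ in $H$, the face-boundary path of $F$ is the unique $(v_i,v_j)$-geodesic in $H$ --- but your guarding method is different and more elementary: a deterministic one-edge-per-turn sweep by $C_3,C_1$ that never consults the robber's position, with confinement at every instant certified by the separator structure rather than by distance shadows. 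This buys a cleaner endgame (the sweep terminates with two cops already on an adjacent separating pair, and the degenerate case where both sides are empty immediately surrounds the robber), at the cost of having to establish two embedding facts that you only sketch: (i) the boundary of $F$ meets the outer cycle in monotonically increasing indices $i=a_0<a_1<\cdots<a_{s+1}=j$, and (ii) the neighborhood restriction that every $v_m$ with $a_t<m<j$ has all neighbors indexed in $\{a_t,\dots,j\}$. Both are true, and both follow from the non-interleaving of chords in an outerplanar embedding together with repeated applications of Lemma \ref{lemmaEdge} and the fact that no edge may enter the interior of the face $F$; you correctly flag this as the point needing the most care, and I would spell out the interleaving argument there (any offending edge $v_pv_m$ with $p<a_t$ would interleave with some face edge $v_{a_r}v_{a_{r+1}}$ or be a chord of the face cycle $\partial F$ with no region to live in). With that filled in, your argument is a complete and valid alternative proof of Lemma \ref{lemmaSmallerPath}.
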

\begin{proof}
We call the cop at $v_i$ $C_1$, and we call the cop at $v_j$ $C_2$. Let $H = G[v_i, v_{i+1}, \dots, v_{j-1}, v_j] \setminus (v_iv_j)$. We note that the robber's territory in $G$ is a subgraph of $H$. Let $P = (u_0, \dots, u_q)$ be a $(v_i,v_j)$-geodesic in $H$. Note that $q \geq 2$. We define the robber's shadow on $P$ as in Lemma \ref{lemmaThreeCops}. As before, the robber's shadow must either stay put in $P$ on each turn or move to an adjacent vertex of $P$. Therefore, a cop $C_3$ can move to the same vertex as the robber's shadow on every turn after a finite number of moves. We let $C_3$ execute such a strategy.

After $C_3$ begins ``capturing" the robber's shadow on every move, we let $C_1$ move toward the robber's shadow on $P$ on each turn until reaching $u_{k-1}$, where $u_k$ is the position of the robber's shadow. $C_1$ then moves to $u_{k-1}$ on each subsequent turn, where $u_k$ is a subsequent position of the robber's shadow. When $C_1$ executes this strategy, the robber will not be able to reach $v_i = u_0$; in order to reach $u_0$, the robber would first have to move to a vertex at distance one from $u_0$, at which point $C_1$ would occupy $u_0$. At the same time, we let $C_2$ move toward the robber's shadow on $P$ until reaching $u_{k+1}$, where $u_k$ is the position of the robber's shadow (or until reaching $u_q$ if the robber's shadow occupies $u_q$). $C_2$ then moves to $u_{k+1}$ on each subsequent turn, where $u_k$ is a subsequent position of the robber's shadow (or to $u_q$ if the robber's shadow occupies $u_q$). By a similar argument, the robber will not be able to reach $v_j$ while $C_2$ executes this strategy. Therefore, when $C_1$ and $C_2$ execute this strategy, the robber's territory in $G$ does not increase.

After $C_1, C_2, C_3$, successfully execute their strategies, $C_1,C_2,C_3$ guard the path $P$ by Lemma \ref{lemmaThreeCops}, and the robber is restricted to a component of $H \setminus \{u_0, \dots, u_q\}$. For $0 \leq i \leq q-1$, let $P(u_i,u_{i+1})$ be the graph induced by the vertices of the exterior path from $u_i$ to $u_{i+1}$ that does not include any vertex $u_j$ for $j \not \in \{i,i+1\}$. As $q \geq 2$, $P(u_i,u_{i+1})$ is uniquely defined.

If $H \setminus \{u_0, \dots, u_q\}$ is empty, then the robber has no territory, implying that the robber is surrounded by $C_1,C_2,C_3$. Otherwise, by Lemma \ref{lemmaEdge}, the components of the graph $H \setminus \{u_0, \dots, u_q\}$ must be of the form $P(u_i,u_{i+1}) \setminus \{u_i,u_{i+1}\}$. Therefore, the robber is restricted to a region $P(u_i,u_{i+1}) \setminus \{u_i,u_{i+1}\}$. As $P$ is guarded, we can move two cops to occupy $u_i,u_{i+1}$ before the robber can reach either of $u_i,u_{i+1}$. We move two cops in such a way to $u_i, u_{i+1}$. The vertices $u_i,u_{i+1}$ can be written as $v_k,v_l$. Furthermore, with $v_k, v_l$ guarded, the robber is restricted to a vertex set $\{v_{k+1}, \dots, v_{l-1}\}$, which is a proper subset of $\{v_{i+1}, \dots, v_{j-1}\}$. Thus the lemma is proven.
\end{proof}

\begin{corollary}
Let $H$ be a two-connected subgraph of an outerplanar graph $G$. Suppose that two cops occupy adjacent vertices $v_i,v_j \in H$. Then there exists a strategy involving three cops that removes $H$ from the territory of the robber.
\label{2con}
\end{corollary}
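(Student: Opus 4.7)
The plan is to iterate Lemma \ref{lemmaSmallerPath} inside the two-connected outerplanar subgraph $H$. By the Chartrand--Harary lemma quoted earlier, $H$ has a Hamilton cycle $C_H=(v_0,v_1,\dots,v_{n-1})$ equal to the exterior facial walk of $H$, so $V(H)=V(C_H)$. Because $G$ is outerplanar and $C_H$ is a cycle of $G$, no vertex of $V(G)\setminus V(H)$ can lie strictly inside $C_H$ in the planar embedding; hence $V(G)\setminus V(H)$ sits entirely in the exterior of $C_H$.

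The two cops at $v_i,v_j$ with $v_iv_j\in E(H)$ split $V(H)\setminus\{v_i,v_j\}$ into two cyclic arcs $A_1$ and $A_2$ of $C_H$. First I would apply Lemma \ref{lemmaSmallerPath} inside $H$ to shrink $A_1$: the lemma uses a third cop to stalk the robber's shadow on a geodesic of the subgraph $H[A_1\cup\{v_i,v_j\}]\setminus v_iv_j$, shepherding $C_1$ and $C_2$ along that geodesic until they occupy a new adjacent pair $v_k,v_l\in V(H)$ with $\{v_{k+1},\dots,v_{l-1}\}\subsetneq A_1$. Iterating this at most $|A_1|$ times empties the arc; a symmetric procedure then exhausts $A_2$ in at most $|A_2|$ additional rounds. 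Because each iteration strictly shortens an arc, the whole process terminates in finitely many moves, and at that point no vertex of $V(H)$ is in the robber's region.

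The main obstacle is that Lemma \ref{lemmaSmallerPath} was stated with the game played on a two-connected outerplanar host, whereas here the game is played on the larger outerplanar graph $G$, and the robber is free to roam through $V(G)\setminus V(H)$. One must verify that the shadow-stalking strategy on a geodesic of $H$ still succeeds in this broader setting. The crucial geometric input is the location of $V(G)\setminus V(H)$ in the exterior of $C_H$: any robber attempt to enter $V(H)$ must happen through a vertex of $C_H$, and once the stalking cops guard a geodesic of $H$ from $v_i$ to $v_j$, that geodesic together with the edge $v_iv_j$ bounds a region of the plane containing no vertex of $G\setminus H$, so the cops cannot be flanked from behind. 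Translating this planar picture into careful distance-and-shadow bookkeeping, so that the hypotheses of Lemma \ref{lemmaSmallerPath} are legitimately satisfied at each iteration, is the principal technical point to nail down.
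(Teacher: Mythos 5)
Your proposal matches the paper's proof, which is exactly this: iterate Lemma \ref{lemmaSmallerPath} until no vertex of $H$ remains in the robber's territory, at which point the robber is either surrounded or shut out of $H$. The technical point you flag (that Lemma \ref{lemmaSmallerPath} is stated for a two-connected outerplanar host while the game here is played on all of $G$) is glossed over in the paper as well; it is resolved by noting that in the intended application $H$ is a \emph{maximal} two-connected subgraph, so any path of $G$ between two vertices of $H$ lies entirely in $H$, hence $H$-geodesics are $G$-geodesics and the shadow and guarding arguments carry over verbatim.
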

\begin{proof}
By following the strategy in Lemma \ref{lemmaSmallerPath}, we can iteratively reduce the number of vertices in $H$ that belong to the robber's territory. We can continue this process until no vertex of $H$ belongs to the robber's territory. Then the robber is either surrounded or prevented from entering $H$.
\end{proof}

\begin{theorem}
Let $G$ be an outerplanar graph. Then $s(G) \leq 3$.
\label{thmOut}
\end{theorem}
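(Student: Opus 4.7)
The plan is to prove $s(G)\le 3$ by induction on the number of blocks of $G$, using Corollary \ref{2con} as the workhorse for each individual block.

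\textbf{Base case.} Suppose $G$ consists of a single block. If $|V(G)|\le 2$ then $s(G)\le 2$ trivially. Otherwise $G$ is 2-connected outerplanar, and by the Chartrand--Harary lemma stated above, the outer facial walk of $G$ is a Hamiltonian cycle $(v_0,\ldots,v_{n-1})$. I would first move two of the three cops to the endpoints $v_i, v_j$ of any edge on this cycle; by the corollary of Lemma \ref{lemmaEdge}, the robber is then restricted to an arc $\{v_{i+1},\ldots,v_{j-1}\}$. Applying Corollary \ref{2con} with $H=G$ iterates Lemma \ref{lemmaSmallerPath} to shrink this arc until the robber's territory is empty, and the robber is surrounded.

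\textbf{Inductive step.} Suppose $G$ has at least two blocks. Pick a leaf $B$ of the block-cutpoint tree of $G$, with unique cut vertex $c\in V(B)$, and set $G'=G-(V(B)\setminus\{c\})$, an outerplanar graph with one fewer block, so $s(G')\le 3$ by induction. I would initially place two cops on an edge $cv\in E(B)$ (with $v$ a neighbor of $c$ in $B$) and the third cop at $c$. After the robber places at some vertex $r$, two cases arise. If $r\in V(B)\setminus\{c\}$, apply Corollary \ref{2con} with $H=B$: every vertex of $V(B)\setminus\{c\}$ is a non-cut vertex of $G$, so its neighbors lie entirely in $B$; moreover, Lemma \ref{lemmaSmallerPath}'s iteration keeps the robber in a shrinking arc of $B$'s Hamiltonian cycle that never contains $c$. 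Hence the robber cannot escape $G\setminus B$, and is ultimately surrounded inside $B$. If instead $r\in V(G)\setminus V(B)$, the robber is confined to a component of $G-\{c,v\}$ lying entirely in $G'\setminus\{c\}$, and I would execute the inductive strategy on $G'$ to surround the robber there.

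\textbf{Main obstacle.} The trickiest part is the second case of the inductive step: while executing the $G'$ strategy, the cops must move around $G'$, and the robber may attempt to slip through $c$ into $B\setminus\{c\}$. I would resolve this by strengthening the inductive hypothesis to a ``guarded vertex'' variant, stating that 3 cops can surround the robber on $G'$ while ensuring the robber never ends a turn at a prescribed vertex (here, $c$). With such a hypothesis, the robber cannot reach $c$, cannot escape to $B$, and the two cases knit together cleanly. A possible alternative is to peel blocks off one at a time: each time the robber escapes through a cut vertex to a new block, re-apply Corollary \ref{2con} to the new leaf block containing the robber, using the observation that each escape strictly decreases the ``available'' portion of the block-cutpoint tree.
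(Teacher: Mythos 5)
Your base case and the first half of your inductive step are sound and follow the paper's machinery (Corollary \ref{2con} applied within a single block, with the observation that every vertex of a leaf block other than its cut vertex has all of its neighbors inside that block). The genuine gap is exactly where you flag it: the second case of the inductive step. The confinement of the robber to a component of $G-\{c,v\}$ holds only while cops actually stand on $c$ and $v$; the moment they leave those vertices to execute the inductive strategy on $G'$, nothing prevents the robber from walking onto $c$ and escaping into $B$. Your proposed fix --- a ``guarded vertex'' strengthening of the inductive hypothesis --- is asserted but not proven, and it is not something you get for free from the unstrengthened statement: to guarantee the robber never ends a turn at $c$, the strategy on $G'$ must be \emph{rooted} at $c$ (a cop stationed at $c$, or a guarded edge incident to $c$, at every moment until $c$ is permanently separated from the robber's territory), and one must check that this invariant survives each transition in which the cops abandon one guarded structure to build the next. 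That verification is the real content of the theorem, and it is absent from the proposal.

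The paper avoids induction on blocks entirely and instead runs a single monotone territory argument: at every stage either a guarded cut vertex or a guarded edge lies on the boundary of the robber's territory (its Cases 1 and 2), and the key transition step is that when Corollary \ref{2con} expels the robber from a two-connected subgraph $H$, a cop can occupy the exit cut vertex $x \in H$ \emph{before} the robber reaches it, precisely because all of $H$ is still guarded at that moment; hence the territory strictly shrinks and never regrows. This is the statement your sketch is missing. Your ``possible alternative'' of peeling off blocks as the robber escapes is in fact the paper's argument, but your claim that ``each escape strictly decreases the available portion of the block-cutpoint tree'' needs this cop-reaches-$x$-first justification, which you do not supply. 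To complete your write-up, either prove the rooted (guarded-vertex) form of the inductive hypothesis explicitly, or replace the induction by the paper's invariant.
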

\begin{proof}
If $G$ is a tree, then by \cite{Burgess}, $s(G) \leq 2$. Otherwise, we name our cops $C_1,C_2,C_3$. We begin the game by choosing a maximal two-connected subgraph $H \subseteq G$. We place two cops at two adjacent vertices $v_i,v_j \in H$. We show that at each point of the game, we can reduce the robber's territory $R_i$. We consider two cases: \\ \\
Case 1: No two cops occupy the endpoints of an edge with both endpoints adjacent to the robber's territory, and a cop occupies a cut-vertex adjacent to the robber's territory. 

Without loss of generality, let $C_1$ occupy a cut-vertex $x$ adjacent to the robber's territory. We then let $C_2$ guard a neighbor $y$ of $x$ for which $y$ belongs to the robber's territory. This reduces the robber's territory to $R_{i+1} \subsetneq R_i$ and depending on whether or not $y$ is a cut-vertex of $G$, this brings us to Case 1 or 2. \\ \\
Case 2: Two cops occupy the endpoints of an edge with endpoints $u,v$ adjacent to the robber's territory. 

Let $H \subseteq G$ be a maximal two-connected subgraph containing $u,v$. By Corollary \ref{2con}, $C_1,C_2,C_3$ have a strategy to remove all vertices of $H$ from the robber's territory. Furthermore, as $u,v$ are both adjacent to the robber's territory, the robber's territory has at least one vertex in $H$. Therefore, by removing the vertices of $H$ from the robber's territory, we reduce the robber's territory to a region $R_{i+1} \subsetneq R_i$. If the robber is not surrounded during the execution of such a strategy, then the robber is forced to leave $H$, and there exists a cut-vertex $x \in H$ adjacent to the robber's territory. As the vertices of $H$ are guarded, the robber can be prevented from accessing $x$; that is, a cop can reach $x$ before the robber. We let a cop move to $x$ before the robber reaches $x$, and this brings us to Case 1.

By repeatedly reducing the robber's territory in this way, we will eventually reach a point in which the robber's territory is a single vertex. At this point, the robber is surrounded, and the cops win the game.
\end{proof}

Finally, we show that this bound is tight, even for bipartite outerplanar graphs. 

\begin{theorem}
There exists a bipartite outerplanar graph $G$ with $s(G) = 3$.
\end{theorem}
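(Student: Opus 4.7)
The plan is to exhibit a small explicit bipartite outerplanar graph $G$ and prove that two cops are not sufficient to surround the robber on $G$; combined with Theorem \ref{thmOut}, this yields $s(G)=3$. My candidate is the graph $G$ obtained from the four-cycle $v_1 v_2 v_3 v_4$ by attaching one pendant vertex $u_i$ to each $v_i$. This $G$ is clearly bipartite (with bipartition $\{v_1,v_3,u_2,u_4\}\cup\{v_2,v_4,u_1,u_3\}$) and outerplanar (all eight vertices can be drawn on the outer face by placing each pendant just outside its attachment vertex). In $G$, each $v_i$ has degree $3$ and each $u_i$ has degree $1$.

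To show $s(G)\geq 3$, I would describe a strategy by which the robber evades any two cops. The robber always remains on the cycle vertices $v_1,\dots,v_4$, starting on any $v_i$ not initially occupied by a cop (two of the four cycle vertices are free at the start, since there are only two cops). On each of his turns, if no cop occupies his current vertex $v_i$ the robber stays put, and if a cop has just moved onto $v_i$ he moves to whichever of the two cycle-neighbors $v_{i-1},v_{i+1}$ is not occupied by the remaining cop.

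The verification of the strategy is a short pigeonhole argument. First, the robber is never surrounded: his vertex $v_i$ has the three neighbors $v_{i-1},v_{i+1},u_i$, and two cops cannot simultaneously occupy all three. Second, whenever the robber is forced to move (by a cop arriving at $v_i$), the second cop blocks at most one of $v_{i-1}$ and $v_{i+1}$, so at least one cycle-neighbor is always available; in particular the robber never has to take refuge on a pendant $u_i$, where a single cop at $v_i$ would immediately surround him. Hence the game never terminates in a cop win, so $s(G)\geq 3$; together with Theorem \ref{thmOut} this gives $s(G)=3$. There is no substantial obstacle; the only real work is the routine case analysis confirming that the ``move to a free cycle-neighbor'' rule is always executable.
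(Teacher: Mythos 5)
Your proposal is correct and follows essentially the same approach as the paper: the paper uses the $2\times 4$ grid, whose four degree-$3$ vertices form a $4$-cycle on which the robber evades two cops by the same pigeonhole argument, while you use $C_4$ with a pendant attached to each vertex, which works equally well. The upper bound in both cases comes from Theorem \ref{thmOut}.
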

\begin{proof}
Let $G = P_1 \square P_3$ be the grid with $8$ vertices. We note that $G$ is bipartite and outerplanar. We show that $2$ cops cannot surround a robber on $G$.

We note that $G$ has four degree $3$ vertices that form a $4$-cycle $C$. We let the robber begin the game at a vertex $r$ of $C$. As the vertices of $C$ have degree $3$, the robber cannot be surrounded at $r$ without a cop moving to occupy $r$. If a cop occupies $r$, then at most one of the robber's $C$-neighbors is occupied by a cop, and thus the robber can move to a neighboring vertex in $C$. The robber can repeat this strategy indefinitely. Therefore, $s(G) \geq 3$. By Theorem \ref{thmOut}, it follows that $s(G) = 3$.
\end{proof}

\section{Graphs of higher genus and graphs that exclude a minor}
We will briefly consider the surrounding cop number bounds of graphs of higher genus and graphs that exclude a minor. In the traditional game of cops and robbers, strategies for capturing a robber on a graph of higher genus are similar to strategies for planar and toroidal graphs; that is, the cops capture the robber by guarding geodesic paths and iteratively reducing the robber's region, as in \cite{QuilliotGenus} and \cite{Schroeder}. Currently, the best known general strategy for capturing the robber on a graph of genus $g$ is given by N. Bowler et. al., who prove the following theorem.

\begin{theorem} \cite{Lehner}
Let $G$ be a graph of genus $g$. Then there exists a strategy to reduce the robber's region on $G$ to one vertex by guarding geodesic paths, in which no more than $\lfloor \frac{4}{3}g + \frac{10}{3} \rfloor$ geodesic paths are guarded at one time.
\label{thmG}
\end{theorem}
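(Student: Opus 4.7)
The plan is to fix a $2$-cell embedding of $G$ in a surface $\Sigma$ of genus $g$ and mimic the Aigner--Fromme geodesic-guarding paradigm from the planar case, with additional bookkeeping driven by the topology of $\Sigma$. At each stage the robber is confined to an open region $R \subseteq \Sigma$ whose topological boundary is a union of guarded geodesic paths meeting at common endpoints --- call this a \emph{skeleton} $S$. The goal is to show that $R$ can be iteratively shrunk to a single vertex while keeping the number of simultaneously guarded paths in $S$ bounded by $\lfloor \tfrac{4}{3}g + \tfrac{10}{3}\rfloor$.

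First I would address the initialization. Pick a vertex $v_0$ and grow a BFS tree from it; the $v_0$-geodesics encode distances, so any path in this tree can be guarded by the stalking strategy of Lemma~\ref{lemmaThreeCops}. Select a small family of $v_0$-geodesics whose union is a skeleton cutting $\Sigma$ into a disk in which the robber is trapped; the number of such geodesics is $O(g)$, and with care the initial skeleton uses at most $\lfloor \tfrac{4}{3}g + \tfrac{10}{3}\rfloor$ paths. The three baseline paths correspond to the planar $3$-cop strategy, and the extra $\lfloor \tfrac{4}{3}g\rfloor$ paths are used to cut the $g$ handles of $\Sigma$.

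For the iterative step, while $R$ is not a single vertex I would add a new geodesic path inside $R$ that either (a) separates $R$ into two subregions and confines the robber to one (a \emph{separating} cut), or (b) if $R$ still has positive genus, cuts a handle of $R$ along a non-separating geodesic (a \emph{topological} cut that lowers the effective genus of $R$). In both cases I add three cops temporarily to guard the new geodesic via Lemma~\ref{lemmaThreeCops}, then use a bypath/short-cut argument as in Lemma~\ref{lemmaGenSwitch} to replace it by a geodesically closed path needing only two guards, and I release any old guarded paths that no longer touch $R$. The amortization is an Euler-characteristic accounting on the skeleton $S$ viewed as a graph embedded in $\Sigma$: with $V$, $E$, $F$ its vertices, edges and faces (only the face containing $R$ matters), the formula $V - E + F = 2-2g$ links the number of geodesic arcs to the genus consumed, and discharging $\tfrac{4}{3}$ arcs per handle suffices.

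The main obstacle will be the genus-reducing step and the associated discharging. A naive non-separating cut costs three new guards and frees only two (those on the now-closed old path), for a net of one extra guard per handle --- this only gives a bound of the form $g + c$. To reach $\tfrac{4}{3}g + c$ one actually must \emph{charge} three new guards against \emph{two} handles on average, which requires choosing non-separating geodesics whose removal simultaneously kills two handles (a \emph{pants decomposition} style reduction, where one cut bounds a pair of handles sharing a common $v_0$-distance structure). Verifying that such a double-handle cut always exists, and that the bypath replacement (Lemma~\ref{lemmaGenSwitch}) applies across handles without the robber escaping along the partially guarded new path, is the heart of the proof; this is the step for which I would most closely follow the construction of Bowler et al.\ rather than reinvent it.
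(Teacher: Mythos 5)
The paper does not actually prove this statement: Theorem~\ref{thmG} is imported verbatim from Bowler, Erde, Lehner and Pitz \cite{Lehner} and used as a black box, so there is no internal proof to compare against. Judged on its own terms, your sketch reproduces the correct overall architecture (confine the robber to a region bounded by guarded geodesics, distinguish separating from non-separating cuts, amortize the cost of non-separating cuts against the genus), but it leaves a genuine gap at exactly the point that makes the theorem nontrivial. The naive accounting you describe yourself --- three new guards per handle-cut, two released --- gives a bound of the form $g + O(1)$ in the number of \emph{simultaneous} cut operations but does not by itself yield the coefficient $\tfrac{4}{3}$; your proposed fix, that one can always find a non-separating geodesic whose cutting ``kills two handles at once'' so that three guards are charged to two handles, is asserted rather than established, and the Euler-characteristic discharging that is supposed to certify it is never carried out. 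Since the entire content of the theorem over the older bounds of Quilliot and Schroeder is this fractional coefficient, deferring that step to ``follow the construction of Bowler et al.'' reduces the proposal to a citation of the result it is meant to prove.

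Two further points are glossed over. First, the initialization: the claim that a family of $v_0$-geodesics of size at most $\lfloor \tfrac{4}{3}g + \tfrac{10}{3} \rfloor$ cuts $\Sigma$ into a disk containing the robber needs an argument (cutting a genus-$g$ surface open along arcs based at a point generically requires $2g$ arcs, so one cannot afford to cut everything up front; the whole point of the amortized strategy is that handles are destroyed one or two at a time while earlier guards are recycled). Second, geodesy is not preserved under restriction: a path that is geodesic in $G$ need not remain geodesic (nor geodesically closed) with respect to the shrinking territory after handles are cut, and the stalking strategies of Lemmas~\ref{lemmaThreeCops} and \ref{lemmaGenSwitch} require geodesy relative to the region the robber can actually reach. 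Maintaining this invariant across topological cuts is part of what the cited proof has to do and your sketch does not address.
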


This immediately gives us the following result.
\begin{theorem}
Let $G$ be a graph of genus $g$. Then $s(G) \leq 4g + 10$. If $G$ is bipartite, then $s(G) \leq \lfloor \frac{8}{3}g + \frac{20}{3}\rfloor$.
\label{thmGenGenus}
\end{theorem}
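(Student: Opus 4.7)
The plan is to apply Theorem \ref{thmG} in a plug-and-play fashion, replacing each guarded geodesic path in the Bowler et al.\ strategy with the surrounding-variant guard from Lemma \ref{lemmaThreeCops} (in the general case) or Lemma \ref{lemmaBipPath} (in the bipartite case). The Bowler et al.\ strategy is phrased purely in terms of iteratively shrinking the robber's region by guarding geodesic paths, and does not depend on the specific mechanism used to enforce the guard, so this substitution should go through directly.

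First I would invoke Theorem \ref{thmG} to obtain a strategy that reduces the robber's region to a single vertex while maintaining at most $K := \lfloor \tfrac{4}{3}g + \tfrac{10}{3} \rfloor$ geodesic paths guarded at any one time. Next I would replace each such guard: Lemma \ref{lemmaThreeCops} guards a geodesic path using three cops in general graphs, and Lemma \ref{lemmaBipPath} guards a geodesic path using two cops in bipartite graphs. Since at most $K$ paths are guarded simultaneously at any moment, the total number of cops used is at most $3K$ in the general case and $2K$ in the bipartite case. For the arithmetic, $3K \le 3\bigl(\tfrac{4}{3}g + \tfrac{10}{3}\bigr) = 4g + 10$, giving $s(G) \le 4g + 10$. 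Similarly $2K \le \tfrac{8}{3}g + \tfrac{20}{3}$, and since $2K$ is an integer, $s(G) \le \lfloor \tfrac{8}{3}g + \tfrac{20}{3}\rfloor$ in the bipartite case.

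Finally, I would confirm that reducing the robber's region to a single vertex actually surrounds the robber. When the robber's region shrinks to one vertex $r$, every neighbor of $r$ lies on some guarded path. By the construction of the guards in Lemmas \ref{lemmaThreeCops} and \ref{lemmaBipPath}, whenever the robber sits adjacent to a guarded path, every one of its neighbors on that path is occupied by a cop at the end of the cops' turn preceding the robber's move. Hence on the turn at which the robber is confined to $r$, all neighbors of $r$ are occupied, and the robber is surrounded.

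The step I expect to require the most care is matching the notion of a ``guard'' used by Bowler et al.\ to the one provided by our lemmas, especially during path switches. Their strategy periodically introduces a new geodesic path while another is still being guarded, and our surrounding-variant guards require a shadow-stalking phase to come online; the key point is that the bound $K$ in Theorem \ref{thmG} counts paths guarded at any instant, transitions included, so no additional cop budget is needed. I would therefore want to verify, by inspecting their argument, that each path introduced by their strategy is geodesic in the ambient subgraph at the moment of introduction (so that Lemma \ref{lemmaThreeCops} or Lemma \ref{lemmaBipPath} applies) and that the robber cannot penetrate the region before the new guard is set up, exactly as handled in Lemmas \ref{lemmaGenSwitch} and \ref{lemmaSwitch} for the planar and bipartite planar cases.
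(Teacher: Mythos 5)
Your proposal is correct and follows essentially the same route as the paper: the paper's proof is the single sentence ``We apply the strategy of Theorem \ref{thmG}, using three cops to guard a geodesic path in the general case, and using two cops to guard a geodesic path if $G$ is bipartite,'' and your arithmetic ($3K \le 4g+10$ and $2K \le \lfloor \tfrac{8}{3}g+\tfrac{20}{3}\rfloor$ since $2K$ is an integer) together with the path-substitution argument is exactly what is implicit there. Your added caution about verifying the hand-off during path switches is reasonable diligence but not something the paper itself elaborates.
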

\begin{proof}
We apply the strategy of Theorem \ref{thmG}, using three cops to guard a geodesic path in the general case, and using two cops to guard a geodesic path if $G$ is bipartite.
\end{proof}
More generally, we may also consider families of graphs that exclude a minor. A theorem of T. Andreae from \cite{Andreae} shows that if $G$ is a graph that does not contain $H$ as a minor, then for any vertex $h \in V(H)$ that is not adjacent to a leaf of $H$, $c(G) \leq |E(H - h)|$. Furthermore, the strategy of T. Andreae is carried out solely by guarding geodesic paths. Therefore, by using three cops to guard each path in T. Andreae's strategy (or two cops for the bipartite case), the strategy can be adapted to the surrounding variant of cops and robbers to give us the following theorem.
\begin{theorem}
Let $H$ be a graph, and let $h \in V(H)$ be a vertex of $H$ that has no neighbor of degree one. If $G$ is a graph that does not contain $H$ as a minor, then $s(G) \leq 3|E(H - h)|$. If $G$ is also bipartite, then $s(G) \leq 2|E(H-h)|$.
\end{theorem}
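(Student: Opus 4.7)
The plan is to imitate Andreae's strategy \cite{Andreae} for bounding the traditional cop number of $H$-minor-free graphs, replacing each cop in his argument by a team of three (or two) cops. The first step is to recall Andreae's construction and check that its key feature is exactly what is needed here: at every point in the game, the cops' collective role is to guard at most $|E(H-h)|$ geodesic paths (each in a suitable subgraph of $G$), and by successively swapping out one guarded path for a new geodesic the strategy strictly shrinks the component of $G$ minus the guarded paths that contains the robber, until this component is a single vertex.

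Next, I would lift this blueprint to the surrounding game: wherever Andreae uses a single cop to guard a geodesic by the Aigner--Fromme shadow-stalking trick, I would assign three cops executing the strategy of Lemma \ref{lemmaThreeCops}; in the bipartite case, two cops executing the strategy of Lemma \ref{lemmaBipPath}. These lemmas certify precisely the property Andreae uses, namely that after finitely many moves the robber cannot move onto the guarded path, so every inductive step in Andreae's proof that relies on a geodesic being inaccessible to the robber remains valid verbatim. Consequently the robber's region shrinks to a single vertex $r$ just as in the original proof, and the bounds $3|E(H-h)|$ and $2|E(H-h)|$ follow from multiplying the number of simultaneously guarded paths by $3$ or $2$.

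The main point to verify is the terminal condition, since this is the only place the surrounding game really differs from the traditional one: in the traditional game Andreae concludes because the robber is trapped on a vertex occupied by a cop, whereas here we must show that when the robber's region is $\{r\}$, every neighbor of $r$ in $G$ is already covered by a cop. This is built into our guarding primitives. By construction, $r$ has no neighbor outside the union of the guarded paths, so every neighbor of $r$ lies on some guarded path $P = (v_0, \ldots, v_k)$. If $r$ is at distance $d$ from $v_0$, then by Lemma \ref{lemma3} any neighbor $u$ of $r$ on $P$ satisfies $u \in \{v_{d-1}, v_d, v_{d+1}\}$ (respectively $u \in \{v_{d-1}, v_{d+1}\}$ in the bipartite case, by Lemma \ref{lemmaPLM}), and each of these is occupied by one of the three (respectively two) cops stalking the robber's shadow on $P$. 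Hence every neighbor of $r$ is occupied by a cop and the robber is surrounded, completing the proof. The only delicate bookkeeping will be to confirm that Andreae's path-swapping steps can genuinely be executed with the additional guarding cops in place (the extra cops do not interfere, since they simply move in lockstep with their assigned shadow), which should be routine.
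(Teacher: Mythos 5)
Your proposal is correct and follows essentially the same route as the paper, which likewise justifies the theorem by observing that Andreae's strategy guards at most $|E(H-h)|$ geodesic paths at a time and replacing each guarding cop by a team of three (or two in the bipartite case) via Lemmas \ref{lemmaThreeCops} and \ref{lemmaBipPath}. Your explicit check of the terminal condition (that every neighbor of the robber's final vertex is occupied, via Lemmas \ref{lemma3} and \ref{lemmaPLM}) is a welcome addition that the paper handles only in its general discussion of guarded paths in Section 2.
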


\section{Open questions}
We have shown that for planar graphs $G$, $s(G) \leq 7$, and $s(G)$ may be as large as $6$. Furthermore, for toroidal graphs $G$, we have shown that $s(G) \leq 8$, and $s(G)$ may be as large as $7$. Thus we may naturally ask the following questions:  
\begin{itemize}
\item Does there exist a planar graph $G$ for which $s(G) = 7$?
\item Does there exist a toroidal graph $G$ for which $s(G) = 8$?
\end{itemize}
The authors conjecture that both of these questions have a negative answer. 

Furthermore, S. Gonz\'{a}lez and B. Mohar show that three cops are enough to capture a robber on a planar graph even when only two cops are allowed to move on each turn \cite{Sebastian}. These authors also show that four cops are sufficient to capture a robber on a planar graph even when each cop is required to move on every turn \cite{SebastianThesis}. We may thus ask the following questions as well:
\begin{itemize}
\item How many cops are required to surround a robber on a planar (toroidal) graph when at most $k$ cops may move on each turn?
\item How many cops are required to surround a robber on a planar (toroidal) graph when all cops are required to move on each turn?
\end{itemize}

\section{Acknowledgments} 
The authors thank Sebasti\'{a}n Gonz\'{a}lez Hermosillo de la Maza for fruitful discussions. The authors also thank Florian Lehner for making several helpful corrections to an earlier version of this paper. Finally, the authors thank Bojan Mohar for several suggestions that improved the quality of the paper.

\raggedright
\bibliographystyle{plain}
\bibliography{SurroundingBib}

\end{document}